\documentclass{amsart}
\usepackage{amsmath,amssymb,amsthm,enumerate}
\newtheorem{definition}{Definition}[section]
\newtheorem{proposition}[definition]{Proposition}
\newtheorem{lemma}[definition]{Lemma}
\newtheorem{theorem}[definition]{Theorem}
\newtheorem{corollary}[definition]{Corollary}
\newtheorem{remark}[definition]{Remark}

\title{Lie applicable surfaces}
\author{Mason Pember}
\address{Vienna University of Technology, Wiedner Hauptstrasse 8-10/104, A-1040 Vienna, Austria\\}
\email{mason@geometrie.tuwien.ac.at}

\begin{document}
\maketitle

\begin{abstract}
We give a detailed account of the gauge-theoretic approach to Lie applicable surfaces and the resulting transformation theory. In particular, we show that this approach coincides with the classical notion of $\Omega$- and $\Omega_{0}$-surfaces of Demoulin. 
\end{abstract}

\section{Introduction}

In~\cite[Section 85]{B1929}, Blaschke studies surfaces in Lie sphere geometry using the hexaspherical coordinate model introduced by Lie~\cite{L1872}. By using an adapted frame, Blaschke studies the compatibility conditions of such surfaces and in so doing finds that there are two 1-forms $\omega_{1}$ and $\omega_{2}$ that generically determine a surface up to Lie sphere transformation (one can alternatively use the quadratic form $\omega_{1}\omega_{2}$
and the conformal class of the cubic form $\omega_{1}^{3} - \omega_{2}^{3}$). Blaschke showed that there exist surfaces that are not determined by these forms. Following the terminology of~\cite{MN2006} we shall call these~\textit{Lie applicable surfaces}. In~\cite{MN2006} it is also shown that these surfaces are the deformable surfaces of Lie sphere geometry, that is, the only surfaces in Lie sphere geometry that admit non-trivial second order deformations. 

The class of Lie applicable surfaces consists only of $\Omega$- and $\Omega_{0}$-surfaces, the theory of which we shall now recall. Originally discovered by Demoulin~\cite{D1911iii, D1911i,D1911ii},  $\Omega$-surfaces in $\mathbb{R}^{3}$ are characterised (using standard notation) by the equation
\begin{equation}
\label{eqn:introdem} 
\left(\frac{V}{U}\frac{\sqrt{E}}{\sqrt{G}}\frac{\kappa_{1,u}}{\kappa_{1} - \kappa_{2}}\right)_{v} + \epsilon^{2}  \left(\frac{U}{V}\frac{\sqrt{G}}{\sqrt{E}}\frac{\kappa_{2,v}}{\kappa_{1}-\kappa_{2}}\right)_{u}=0
\end{equation}
given in terms of curvature line coordinates $(u,v)$, where $U$ is a function of $u$, $V$ is a function of $v$ and $\epsilon\in\{1,i\}$. Demoulin showed that $\Omega$-surfaces envelop a pair of isothermic sphere congruences and gave an alternative characterisation in terms of the existence of an associate $\Omega$-surface, analogous to the Christoffel transformation of isothermic surfaces. Furthermore, it is shown that isothermic, Guichard and $L$-isothermic surfaces are examples of $\Omega$-surfaces. Eisenhart~\cite{E1915,E1916} later developed a B\"{a}cklund-type transformation for these surfaces. $\Omega_{0}$-surfaces, the Lie geometric analogue of $R_{0}$-surfaces, are the surfaces satisfying~(\ref{eqn:introdem}) with $\epsilon=0$ and are envelopes of a curvature sphere congruence that is isothermic. 

Recent interest in integrable systems has sparked a renewed interest in $\Omega$- and $\Omega_{0}$-surfaces~\cite{BHR2010,BHR2012,C2012i, F2000ii,F2002,MN2006}.
Since isothermic surfaces~\cite{BCwip,BC2010i,BDPP2011,H2003,BS2012,S2008}, Guichard surfaces~\cite{BC2010i, BCwip, H2003, KS2003} and $L$-isothermic surfaces~\cite{MN1997, MN2000, MN2016, RS2009, S2009} have all been shown to constitute integrable systems, it comes as no surprise that $\Omega$- and $\Omega_{0}$-surfaces constitute such systems as well. In~\cite[Chapter 4]{C2012i}, Clarke develops a gauge-theoretic approach for Lie applicable surfaces (and, more generally, $l$-applicable maps) analogous to the approach used for isothermic surfaces, that is, they are characterised by the existence of a certain 1-parameter family of flat connections. This approach lends itself well to the study of transformations of these surfaces:
\begin{itemize}
\item local trivialising gauge transformations of these connections give rise to a spectral deformation, 
\item parallel sections give rise to B\"{a}cklund-type transformations, and
\item analogues of the well known permutability theorems for transformations of isothermic surfaces~\cite{BDPP2011,H2003} hold for these transformations.
\end{itemize}
Furthermore, certain well known examples of Lie applicable surfaces (e.g., linear Weingarten surfaces, see~\cite{BHR2010,BHR2012}) can be characterised in terms of polynomial conserved quantities of this family of flat connections. 

The purpose of this paper is to give a detailed account of the gauge theoretic approach for Lie applicable surfaces, revisiting and elaborating further on the work of Clarke~\cite{C2012i}. Particular attention is given to making clear the equivalence of this approach and the classical definition of Demoulin~\cite{D1911iii, D1911i,D1911ii}. 

In Section~\ref{sec:pre} we recall the Lie sphere model of~\cite{L1872}. In this setting we study the Legendre lift of a front in a three dimensional space form. We recover the invariants of such a lift introduced by Blaschke~\cite{B1929} and recall the modern approach to Ribaucour transforms of~\cite{BH2006}. 

In Section~\ref{sec:lieapp} Lie applicable surfaces are studied from the gauge theoretic viewpoint, that is, by the existence of a non-trivial closed 1-form taking values in a certain vector bundle. Such an approach is less straightforward than in the case of isothermic surfaces as, given such a closed 1-form, we obtain a set of uncountably many such closed 1-forms. This ambiguity is dealt with by using the middle potential - a unique 1-form in this set with a certain geometric property. This is analogous to the potential used in~\cite[\S2.4.1]{C2012i} for the study of projectively applicable surfaces. We show that this approach yields the classical notion of $\Omega$- and $\Omega_{0}$-surfaces~\cite{D1911iii,D1911i,D1911ii} in space forms. 

In Section~\ref{sec:trafos} we recall from~\cite{C2012i} the transformation theory of Lie applicable surfaces.  In contrast to~\cite{C2012i}, we give some consideration to umbilics. For example, we see that the appearance of umbilics on Darboux transforms is attributed to the enveloping sphere congruence between the two surfaces coinciding with one of the isothermic sphere congruences. 

In Section~\ref{sec:asssurf} we recall the classical notion of associate $\Omega$-surfaces~\cite{D1911iii}, i.e., two Combescure transformations such that a certain relation between the principal curvatures of the two surfaces is satisfied. We show that such surfaces give rise to a system of $O$-surfaces, see~\cite{KS2003}. 

\textit{Acknowledgements.} This work is based on part of the author's doctoral thesis~\cite{P2015}. The author would like to thank his PhD supervisor, Francis Burstall, for excellent supervision and subsequent support. Furthermore, he expresses his gratitude to the University of Bath and Kobe University for providing enjoyable environments to carry out this research. He is also very thankful to Udo Hertrich-Jeromin and Wayne Rossman for invaluable guidance and feedback. Moreover, he gratefully acknowledges support from the Japan Society for the Promotion of Science and the Engineering and Physical Sciences Research Council. The author would also like to thank the referees for their useful input. 

\section{Preliminaries}
\label{sec:pre}

\subsection{Notation}
Let $\Sigma$ be a manifold and, as usual, let $T\Sigma$ denote the tangent bundle of $\Sigma$. For a vector bundle $E$ over $\Sigma$, $\Gamma E$ shall denote the space of smooth sections of $E$. 
Given a vector space $V$, we shall denote by $\underline{V}$ the trivial bundle $\Sigma\times V$. If $W$ is a vector subbundle of $\underline{V}$, we define $W^{(1)}$ to be the subset of $\underline{V}$ consisting of the images of sections of $W$ and derivatives of sections of $W$ with respect to the trivial connection on $\underline{V}$ and call $W^{(1)}$ the derived bundle of $W$. In general $W^{(1)}$ will not be a subbundle of $\underline{V}$, however, in many instances, we may assume that it is.  

Throughout this paper we shall be considering the pseudo-Euclidean space $\mathbb{R}^{4,2}$, i.e., a six dimensional vector space equipped with a non-degenerate symmetric bilinear form $(\,,\,)$ of signature $(4,2)$. Let $\mathcal{L}$ denote the lightcone of $\mathbb{R}^{4,2}$. The orthogonal group $\textrm{O}(4,2)$ acts transitively on $\mathcal{L}$. We shall denote by $\mathbb{P}(\mathcal{L})$ the projectivisation of $\mathcal{L}$, i.e., the set of null 1-dimensional subspaces of $\mathbb{R}^{4,2}$. 

We shall recall in Subsection~\ref{subsec:symbreak} that, under Lie's~\cite{L1872} correspondence, points in $\mathbb{P}(\mathcal{L})$ correspond to spheres in any three dimensional space form. Therefore given a manifold $\Sigma$ we have that any smooth map $s:\Sigma\to \mathbb{P}(\mathcal{L})$ corresponds to a sphere congruence in any space form. We shall thus refer to $s$ as a~\textit{sphere congruence}. Such a map can also be identified as a smooth rank 1 null subbundle of the trivial bundle $\underline{\mathbb{R}}^{4,2}$. 

\begin{remark}
It is well known that the exterior algebra $\wedge^{2}\mathbb{R}^{4,2}$ is isomorphic to the Lie algebra $\mathfrak{o}(4,2)$ of $\textrm{O}(4,2)$, i.e., the space of skew-symmetric endomorphisms of $\mathbb{R}^{4,2}$, via the isomorphism
\[ a\wedge b\mapsto (a\wedge b),\]
where for any $c\in\mathbb{R}^{4,2}$, 
\[ (a\wedge b)c = (a,c)b - (b,c)a.\]
We shall make use of this identification (without warning) throughout this paper. 
\end{remark}

Given a manifold $\Sigma$, if $\omega_{1},\omega_{2}\in\Omega^{1}(\underline{\mathbb{R}}^{4,2})$, that is $\omega_{1}$ and $\omega_{2}$ are 1-forms on $\Sigma$ with values in $\underline{\mathbb{R}}^{4,2}$, then we define $\omega_{1}\curlywedge \omega_{2}$ to be the $2$-form with values in $\wedge^{2}\underline{\mathbb{R}}^{4,2}$ defined by
\[ \omega_{1}\curlywedge \omega_{2}(X,Y) := \omega_{1}(X)\wedge \omega_{2}(Y) - \omega_{1}(Y)\wedge \omega_{2}(X),\]
for $X,Y\in\Gamma T\Sigma$. Notice that $\omega_{1}\curlywedge \omega_{2} = \omega_{2}\curlywedge \omega_{1}$.

\subsection{Legendre immersions}
The maximal isotropic subspaces that exist in $\mathbb{R}^{4,2}$ are 2-dimensional. Let $\mathcal{Z}$ denote the Grassmannian of such isotropic 2-dimensional subspaces. Of course, we can identify this space with the space of lines in the projective lightcone $\mathbb{P}(\mathcal{L})$. We shall recall in Subsection~\ref{subsec:symbreak} that under Lie's correspondence~\cite{L1872} such lines correspond to parabolic pencils of spheres. 

Suppose that $\Sigma$ is a 2-dimensional manifold and let $f:\Sigma\to \mathcal{Z}$ be a smooth map. We may view $f$ as a 2-dimensional subbundle of the trivial bundle $\underline{\mathbb{R}}^{4,2}$. Then we may define a tensor, analogous to the solder form defined in~\cite{BC2004,BR1990}, 
\[ \beta: T\Sigma \to Hom(f,f^{(1)}/f),\quad X\mapsto (\sigma \mapsto d_{X}\sigma \, \bmod\, f).\]
In accordance with~\cite[Theorem 4.3]{C2008} we have the following definition:

\begin{definition}
$f:\Sigma\to\mathcal{Z}$ is a Legendre immersion if $f^{(1)}=f^{\perp}$ and $\ker\beta =\{0\}$. 
\end{definition}

Note that $f^{\perp}/f$ is a rank 2 subbundle of $\underline{\mathbb{R}}^{4,2}/f$, inheriting a positive definite metric from $\mathbb{R}^{4,2}$.    

Using the terminology of~\cite{BH2006} we say that $f$ \textit{envelops} a sphere congruence $s:\Sigma\to \mathbb{P}(\mathcal{L})$ if for all $p\in\Sigma$, $s(p)\subset f(p)$, i.e., $s$ is a rank 1 subbundle of $f$. 

\begin{definition}
\label{def:curvsph}
Let $p\in\Sigma$. Then a 1-dimensional subspace $s(p)\le f(p)$ is a curvature sphere of $f$ at $p$ if there exists a non-zero subspace $T_{s(p)}\le T_{p}\Sigma$ such that $\beta(T_{s(p)})s(p) = 0$. We call the maximal such $T_{s(p)}$ the curvature space of $s(p)$. 
\end{definition}

It was shown in~\cite{P1985} that at each point $p$ there is either one or two curvature spheres. We say that $p$ is an \textit{umbilic point of $f$} if there is exactly one curvature sphere $s(p)$ at $p$ and in that case $T_{s(p)}=T_{p}\Sigma$. Away from umbilic points we have that the curvature spheres form two rank 1 subbundles $s_{1},s_{2}\le f$ with respective curvature subbundles $T_{1}=\bigcup_{p\in \Sigma}T_{s_{1}(p)}$ and $T_{2}=\bigcup_{p\in \Sigma}T_{s_{2}(p)}$. We then have that $f=s_{1}\oplus s_{2}$ and $T\Sigma = T_{1}\oplus T_{2}$. 

Suppose that $f$ is umbilic-free. Then for each curvature subbundle $T_{i}$ we may define a rank 3 subbundle $f_{i}\le f^{\perp}$ as the set of sections of $f$ and derivatives of sections of $f$ along $T_{i}$. 
One can check that given any non-zero section $\sigma\in \Gamma f$ such that $\langle\sigma\rangle\cap s_{i} = \{0\}$ we have that
\[ f_{i} = f\oplus d\sigma(T_{i}).\]
Furthermore, 
\[ f^{\perp}/f = f_{1}/f\oplus_{\perp} f_{2}/f,\]
and each $f_{i}/f$ inherits a positive definite metric from that of $\mathbb{R}^{4,2}$. 

Let $\sigma_{1}\in\Gamma s_{1}$ and $\sigma_{2}\in\Gamma s_{2}$ be lifts of the curvature sphere congruences and let $X\in\Gamma T_{1}$ and $Y\in \Gamma T_{2}$. Then from Definition~\ref{def:curvsph} it follows immediately that 
\[ d_{X}\sigma_{1},d_{Y}\sigma_{2}\in\Gamma f.\]
Let 
\[ S_{1}:= \left\langle \sigma_{1},d_{Y}\sigma_{1},d_{Y}d_{Y}\sigma_{1}\right\rangle \quad \text{and}\quad 
S_{2}:= \left\langle \sigma_{2},d_{X}\sigma_{2},d_{X}d_{X}\sigma_{2}\right\rangle.\]
It was shown in~\cite{B1929} that $S_{1}$ and $S_{2}$ are orthogonal rank 3 subbundles of $\underline{\mathbb{R}}^{4,2}$ and the restriction of the metric on $\mathbb{R}^{4,2}$ to each $S_{i}$ has signature $(2,1)$. Furthermore, $S_{1}$ and $S_{2}$ do not depend on choices and we have the following orthogonal splitting
\[ \underline{\mathbb{R}}^{4,2} = S_{1}\oplus_{\perp} S_{2}\] 
of the trivial bundle. We refer to this splitting as the \textit{Lie cyclide splitting of $\underline{\mathbb{R}}^{4,2}$} because it can be identified with the Lie cyclides of $f$, i.e., a congruence of Dupin cyclides that make ``the most contact'' with $f$ at each point.

This splitting now yields a splitting of the trivial connection $d$ on $\underline{\mathbb{R}}^{4,2}$:
\[ d = \mathcal{D} + \mathcal{N},\]
where $\mathcal{D}$ is the direct sum of the induced connections on $S_{1}$ and $S_{2}$ and 
\begin{equation}
\label{eqn:lcycN}
\mathcal{N} = d - \mathcal{D}\in \Omega^{1}((Hom(S_{1},S_{2})\oplus Hom(S_{2},S_{1}))\cap \mathfrak{o}(4,2)).
\end{equation}
Since $S_{1}$ and $S_{2}$ are orthogonal, we have that $\mathcal{D}$ is a metric connection on $\underline{\mathbb{R}}^{4,2}$ and $\mathcal{N}$ is a skew-symmetric endomorphism. Hence, $\mathcal{N}\in\Omega^{1}(S_{1}\wedge S_{2})$. 

\begin{lemma}
\label{lem:N}
$\mathcal{N}f\le\Omega^{1}(f)$ and $\mathcal{N}(T_{2})s_{1}=0=\mathcal{N}(T_{1})s_{2}$.
\end{lemma}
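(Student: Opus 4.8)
The plan is to compute $\mathcal{N}$ on the curvature sphere lifts $\sigma_1,\sigma_2$ and exploit the definitions of $S_1$ and $S_2$ directly. Recall $\mathcal{N} = d - \mathcal{D}$, where $\mathcal{D}$ preserves the splitting $\underline{\mathbb{R}}^{4,2} = S_1 \oplus_{\perp} S_2$; thus for a section $\tau$ of $S_1$, $\mathcal{N}\tau$ is the $S_2$-component of $d\tau$, and symmetrically for $S_2$. So the claim $\mathcal{N}f \le \Omega^1(f)$ amounts to showing that for $\sigma_1 \in \Gamma s_1 \subset \Gamma S_1$ and any $X \in \Gamma T\Sigma$, the $S_2$-component of $d_X\sigma_1$ lies in $f$, and likewise the $S_1$-component of $d_X\sigma_2$ lies in $f$; summing over $f = s_1 \oplus s_2$ then gives the first assertion.

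First I would handle directional derivatives separately using $T\Sigma = T_1 \oplus T_2$. For $Y \in \Gamma T_2$: by Definition~\ref{def:curvsph} we have $d_Y\sigma_1 \in \Gamma f \subseteq \Gamma S_1$ is impossible in general — rather $d_Y\sigma_1 \in \Gamma f$ and $f \not\le S_1$ — so instead observe that $d_Y\sigma_1 \in \Gamma f$ means its $S_2$-component is the $S_2$-component of a section of $f$; I will need to know $f \cap S_2$ and the projection of $f$ to $S_2$. Here is the cleaner route: $S_1 = \langle \sigma_1, d_Y\sigma_1, d_Yd_Y\sigma_1\rangle$ already contains $d_Y\sigma_1$, so $d_Y\sigma_1 \in \Gamma S_1$, hence its $S_2$-component vanishes and $\mathcal{N}(Y)\sigma_1 = 0$; this proves $\mathcal{N}(T_2)s_1 = 0$, and symmetrically $\mathcal{N}(T_1)s_2 = 0$, which is the second assertion. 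For $X \in \Gamma T_1$: from Definition~\ref{def:curvsph}, $d_X\sigma_1 \in \Gamma f$. Write $d_X\sigma_1 = a\sigma_1 + b\sigma_2$ with functions $a,b$. Since $\sigma_1 \in \Gamma S_1$, the $S_2$-component of $d_X\sigma_1$ is $b$ times the $S_2$-component of $\sigma_2$; but $\sigma_2 \in \Gamma S_2$, so this $S_2$-component is $b\sigma_2 \in \Gamma f$. Hence $\mathcal{N}(X)\sigma_1 \in \Gamma f$. Combining the two directions, $\mathcal{N}\sigma_1 \in \Omega^1(f)$, and symmetrically $\mathcal{N}\sigma_2 \in \Omega^1(f)$; therefore $\mathcal{N}f \le \Omega^1(f)$.

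The only genuine subtlety is the very first reduction — arguing that it suffices to control the $S_2$-component of $d\sigma_1$ and the $S_1$-component of $d\sigma_2$ — which rests on $\mathcal{D}$ being the direct sum of the induced connections (so it preserves each $S_i$) together with $\mathcal{N} \in \Omega^1(S_1 \wedge S_2)$, both established in the text preceding the lemma. Everything else is a direct unwinding of Definition~\ref{def:curvsph} and the explicit descriptions $S_1 = \langle \sigma_1, d_Y\sigma_1, d_Yd_Y\sigma_1\rangle$, $S_2 = \langle \sigma_2, d_X\sigma_2, d_Xd_X\sigma_2\rangle$. I do not anticipate any real obstacle: the main point to get right is bookkeeping of which curvature direction differentiates which lift, since the roles of $T_1,T_2$ and $s_1,s_2$ are crossed in the definitions of $S_1$ and $S_2$.
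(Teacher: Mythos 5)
Your proof is correct and follows essentially the same route as the paper: $\mathcal{N}(T_{2})\sigma_{1}=0$ because $d_{Y}\sigma_{1}\in\Gamma S_{1}$ by the very definition of $S_{1}$, and $\mathcal{N}(T_{1})\sigma_{1}\in\Omega^{1}(f)$ because $d_{X}\sigma_{1}\in\Gamma f=\Gamma(s_{1}\oplus s_{2})$ with $s_{i}\le S_{i}$, so its $S_{2}$-component lies in $s_{2}\le f$. (The discarded aside asserting $d_{Y}\sigma_{1}\in\Gamma f$ for $Y\in\Gamma T_{2}$ is false --- the curvature space of $s_{1}$ is $T_{1}$, not $T_{2}$ --- but since you abandon that route in favour of the correct one, it does not affect the argument.)
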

\begin{proof}
Suppose that $\sigma_{1}\in\Gamma s_{1}$. Then for any $Y\in\Gamma T_{2}$, $d_{Y}\sigma_{1}\in\Gamma S_{1}$ and thus $\mathcal{N}_{Y}\sigma_{1}=0$. Furthermore, since $s_{1}$ is a curvature sphere, $d_{X}\sigma_{1}\in\Gamma f$. Hence, $\mathcal{N}s_{1}\le \Omega^{1}(f)$. A similar argument can be used for $s_{2}$. 
\end{proof}

\subsection{Symmetry breaking}
\label{subsec:symbreak}
Suppose that $\mathfrak{q},\mathfrak{p}\in \mathbb{R}^{4,2}$ are non-zero vectors such that $\mathfrak{q}\perp \mathfrak{p}$ and $\mathfrak{p}$ is not null. Then we may define a quadric 
\[ \mathfrak{Q}^{3}:=\{y\in\mathcal{L}: \,(y,\mathfrak{q})=-1,\, (y,\mathfrak{p})=0\}. \]
One can show that~(see, for example,~\cite{H2003, S2008}) $\mathfrak{Q}^{3}$ is isometric to a three dimensional space form with constant sectional curvature $\kappa = -|\mathfrak{q}|^{2}$. Lie~\cite{L1872} showed that each $s\in \mathbb{P}(\mathcal{L})$ can be identified with an oriented\footnote{Unless $s\in \langle\mathfrak{p}\rangle^{\perp}$, there exists exactly one other point $\tilde{s}\in \mathbb{P}(\mathcal{L})$ such that $s^{\perp}\cap \mathfrak{Q}^{3} =\tilde{s}^{\perp}\cap \mathfrak{Q}^{3}$. Therefore, each sphere in $\mathfrak{Q}^{3}$ is represented by exactly two points in $\mathbb{P}(\mathcal{L})$ and this gives rise to a notion of orientation (see~\cite{C2008}, for example).} sphere in this space form, namely the sphere determined in $\mathfrak{Q}^{3}$ by the set of points
\[ s^{\perp}\cap \mathfrak{Q}^{3}. \]
Furthermore, in this correspondence, two spheres are in oriented contact with each other if and only if their representatives in $\mathbb{P}(\mathcal{L})$ are orthogonal. Thus, lines in $\mathbb{P}(\mathcal{L})$ correspond to parabolic pencils of spheres, i.e., 1-parameter families of mutually touching spheres. If $|\mathfrak{p}|^{2}=\pm 1$, then 
\[ \mathfrak{P}^{3}:=\{y\in\mathcal{L}:\, (y,\mathfrak{q})=0,\, (y,\mathfrak{p})=-1\}\]
can be identified with the space of hyperplanes (complete, totally geodesic hypersurfaces) in this space form. 

\textit{Lie sphere transformations} are the transformations of space forms that map spheres to spheres and preserve oriented contact. Conveniently, in this model these are represented by the orthogonal transformations of $\mathbb{R}^{4,2}$. In fact $\textrm{O}(4,2)$ is a double cover for the set of Lie sphere transformations. A modern account of this correspondence is given in~\cite{C2008}. 

Given a Legendre immersion $f:\Sigma\to \mathcal{Z}$, we generically obtain a \textit{space form projection} $\mathfrak{f} := f\cap \mathfrak{Q}^{3}$ and a \textit{tangent plane congruence} $\mathfrak{t} := f\cap \mathfrak{P}^{3}$. The condition that $f$ is a Legendre immersion, ensures that $\mathfrak{f}$ is a front, i.e., a smooth map into $\mathfrak{Q}^{3}$ admitting a unit normal vector such that the pairing of surface and normal is an immersion. Conversely, given a front $\mathfrak{f}:\Sigma\to \mathfrak{Q}^{3}$ with tangent plane congruence $\mathfrak{t}:\Sigma\to \mathfrak{P}^{3}$, we obtain a Legendre immersion by taking the span, $f= \langle \mathfrak{f},\mathfrak{t}\rangle$. 

Suppose that $\mathfrak{f}$ is an immersion. Then away from umbilic points of $\mathfrak{f}$ we may choose curvature line coordinates $(u,v)$. By Rodrigues' equations one has that 
\[\mathfrak{t}_{u}+\kappa_{1}\mathfrak{f}_{u} =0 = \mathfrak{t}_{v}+\kappa_{2}\mathfrak{f}_{v},\]
where $\kappa_{1}$ and $\kappa_{2}$ are the principal curvatures of $\mathfrak{f}$. Therefore, 
\[ s_{1}:= \langle \mathfrak{t} +\kappa_{1}\mathfrak{f}\rangle \quad \text{and}\quad s_{2} :=  \langle \mathfrak{t} +\kappa_{2}\mathfrak{f}\rangle\]
are curvature spheres of $f$ with respective curvature subbundles $T_{1}:= \left\langle \frac{\partial}{\partial u}\right\rangle$ and $T_{2}:= \left\langle \frac{\partial}{\partial v}\right\rangle$.

\subsection{Invariants of Lie sphere geometry}
We will now recover the Lie-invariant metric and conformal class of the cubic form used in~\cite{B1929, F2000ii}. These invariants generically\footnote{Blaschke~\cite{B1929} showed that those surfaces that aren't determined are the Lie applicable surfaces. We shall explore this further in Subsection~\ref{subsec:cal}.}  determine a surface up to Lie sphere transformation. 

Let $f:\Sigma\to\mathcal{Z}$ be a Legendre immersion. 

\subsubsection{Conformal structure}
\label{subsec:confstr}
Define a tensor $c\in \Gamma( S^{2}T^{*}\Sigma\otimes (\wedge^{2}f)^{*}\otimes \wedge^{2}(f^{\perp}/f))$ by 
\[ c(X,Y)\xi_{1}\wedge\xi_{2} = \frac{1}{2}(\beta(X)\xi_{1}\wedge \beta(Y)\xi_{2} + \beta(Y)\xi_{1}\wedge \beta(X)\xi_{2}),\]
for any $X,Y\in \Gamma T\Sigma$ and $\xi_{1},\xi_{2}\in\Gamma f$. Since the rank 2 bundle $f^{\perp}/f$ inherits a non-degenerate metric from $\mathbb{R}^{4,2}$, the rank 1 bundle $\wedge^{2}(f^{\perp}/f)$ inherits a definite metric and thus $\wedge^{2}(f^{\perp}/f)$ is a trivial bundle and we can view $c$ as a tensor in $S^{2}T^{*}\Sigma\otimes (\wedge^{2}f)^{*}$. Now suppose that $s(p)$ is a curvature sphere of $f$ at $p$ with curvature subspace $T_{s(p)}$. Then $\beta(T_{s(p)})s(p) = 0$ and since we may write any $\tau\in\Gamma (\wedge^{2}f)$ as $\tau =\sigma\wedge \tilde{\sigma}$, for some $\sigma,\tilde{\sigma}\in\Gamma f$ such that $\sigma(p)\in s(p)$, we have that
\[ c(T_{s(p)},T_{s(p)})\tau_{p}=0.\]
Hence, $c(T_{s(p)},T_{s(p)})=0$. Therefore, at umbilic points $p\in\Sigma$ of $f$, $c_{p} = 0$ and away from umbilic points, for any nowhere zero $\tau\in\Gamma(\wedge^{2}f)$, $g:=c\,\tau$ defines an indefinite metric on $\Sigma$ whose null lines are the curvature subbundles $T_{1}$ and $T_{2}$. We shall refer to $g$ as a representative metric of $c$ and, since $c$ is tensorial in $\wedge^{2}f$, we have that any other representative metric of $c$ is conformally equivalent to $g$. We shall thus refer to $c$ as \textit{the conformal structure of $f$}. 

In the case that $f$ is umbilic-free, the conformal structure $c$ gives rise to the Hodge star operator $\star$ which acts as $id$ on $T^{*}_{1}$ and $-id$ on $T^{*}_{2}$.

\subsubsection{Lie-invariant metric}
Now suppose that $f$ is an umbilic-free Legendre immersion. Recall from~(\ref{eqn:lcycN}) that the Lie cyclide splitting induces a skew-symmetric endomorphism $\mathcal{N}\in\Omega^{1}(S_{1}\wedge S_{2})$. By Lemma~\ref{lem:N}, $\mathcal{N}f\le \Omega^{1}(f)$. Therefore, we may define a tensor $g^{L}\in \Gamma( S^{2}T^{*}\Sigma\otimes \textrm{End}(\wedge^{2}f))$ called the \textit{Lie-invariant metric}\footnote{In~\cite{BH2002}, the Lie cyclides are shown to define a conformal Gauss map for $f$. One can show that the induced metric of this conformal Gauss map is a non-zero constant scalar multiple of the Lie invariant metric.} by 
\begin{eqnarray}
\label{eqn:liemetric} g^{L}(X,Y)\xi_{1}\wedge\xi_{2} = \frac{1}{2}(\mathcal{N}(X)\xi_{1}\wedge \mathcal{N}(Y)\xi_{2} + \mathcal{N}(Y)\xi_{1}\wedge \mathcal{N}(X)\xi_{2}),
\end{eqnarray}
for any $X,Y\in \Gamma T\Sigma$ and $\xi_{1},\xi_{2}\in\Gamma f$. Since $\wedge^{2}f$ has rank 1, $\textrm{End}(\wedge^{2}f)$ is canonically trivial and so we identify $g^{L}$ with a quadratic form. By Lemma~\ref{lem:N}, the curvature subbundles $T_{1}$ and $T_{2}$ are isotropic with respect to $g^{L}$ and thus, away from points where it vanishes, $g^{L}$ is a representative metric of $c$. 

\begin{remark}
Unlike the conformal structure $c$, $g^{L}$ may vanish at certain points. For example, if $f$ is a Dupin cyclide then $g^{L}\equiv 0$. 
\end{remark}

Recall that given a space form $\mathfrak{Q}^{3}$ and space form projection $\mathfrak{f}:\Sigma\to\mathfrak{Q}^{3}$ of $f$ with tangent plane congruence $\mathfrak{t}:\Sigma\to\mathfrak{P}^{3}$, we have that 
\[ \mathfrak{t}+\kappa_{1}\mathfrak{f} \quad \text{and}\quad \mathfrak{t}+\kappa_{2}\mathfrak{f}\]
are lifts of the curvature spheres $s_{1}$ and $s_{2}$, respectively. Now we may split the trivial connection $d=d_{1}+d_{2}$, where $d_{i}$ denotes the partial connection along $T_{i}$. Then one can check that 
\[ \mathcal{N}(\mathfrak{t}+\kappa_{1}\mathfrak{f}) = -\frac{d_{1}\kappa_{1}}{\kappa_{1}-\kappa_{2}}(\mathfrak{t}+\kappa_{2}\mathfrak{f}) \quad \text{and} \quad \mathcal{N}(\mathfrak{t}+\kappa_{2}\mathfrak{f}) = \frac{d_{2}\kappa_{2}}{\kappa_{1}-\kappa_{2}}(\mathfrak{t}+\kappa_{1}\mathfrak{f}).\]
Hence, in terms of curvature line coordinates $(u,v)$,
\[ g^{L} = (\kappa_{1}-\kappa_{2})^{-2}\kappa_{1,u}\kappa_{2,v}\, dudv,\]
and thus $g^{L}$ coincides with the Lie-invariant metric of~\cite[Theorem 1]{F2000ii}.

\subsubsection{Darboux cubic form}
\label{subsec:darbcubform}
Suppose that $f$ is an umbilic-free Legendre immersion. For $X,Y,Z\in\Gamma T\Sigma$ and $\xi_{1},\xi_{2}\in\Gamma f$, define a map 
\[ \mathcal{C}(X,Y,Z)\xi_{1}\wedge \xi_{2}:= (\mathcal{D}_{X}\mathcal{D}_{Y}\xi_{1},\mathcal{N}_{Z}\xi_{2}) -(\mathcal{D}_{X}\mathcal{D}_{Y}\xi_{2},\mathcal{N}_{Z}\xi_{1}) .\]
We call $\mathcal{C}$ the \textit{Darboux cubic form of $f$}.

\begin{lemma}
$\mathcal{C}$ is a tensor taking values in $((T_{1}^{*})^{3} \oplus (T_{2}^{*})^{3})\otimes (\wedge^{2}f)^{*}$. 
\end{lemma}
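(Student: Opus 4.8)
The plan is to verify the two claims — tensoriality and the restriction on the values — separately, and to reduce both to the structural facts about $\mathcal{N}$ already established in Lemma~\ref{lem:N} together with the Lie cyclide splitting $\underline{\mathbb{R}}^{4,2} = S_{1}\oplus_{\perp} S_{2}$. First I would address tensoriality. The only term in $\mathcal{C}(X,Y,Z)\xi_{1}\wedge\xi_{2}$ whose tensorial behaviour is not immediate is the one involving $\mathcal{D}_{X}\mathcal{D}_{Y}$, since $\mathcal{D}$ is a connection, not a tensor. Replacing $Y$ by $hY$ for $h\in C^\infty(\Sigma)$ produces an extra term $(X.h)\,(\mathcal{D}_{Y}\xi_{1},\mathcal{N}_{Z}\xi_{2}) - (X.h)\,(\mathcal{D}_{Y}\xi_{2},\mathcal{N}_{Z}\xi_{1})$; I would show this vanishes because $\mathcal{D}_{Y}\xi_{i}$ and $\mathcal{N}_{Z}\xi_{j}$ lie in orthogonal summands of the Lie cyclide splitting. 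Indeed $\xi_{i}\in\Gamma f\le \Gamma(S_{1}\oplus S_{2})$, and since $f\le S_{1}\oplus S_{2}$ with $\mathcal{D}$ preserving each $S_{k}$, the derivative $\mathcal{D}_{Y}\xi_{i}$ has components in both $S_{1}$ and $S_{2}$; meanwhile $\mathcal{N}_{Z}\xi_{j}\in\Gamma f$ by Lemma~\ref{lem:N}, but more precisely I need the component structure — so the cleaner route is to decompose $\xi_{i} = \xi_{i}^{1} + \xi_{i}^{2}$ along $S_{1}\oplus S_{2}$ and track which summand each piece lands in. Tensoriality in $X$ is then automatic since $\mathcal{D}_{X}$ appears only as the outermost operator applied to the $f$-valued (hence smooth) section $\mathcal{D}_{Y}\xi_{i}$, paired against the tensorial $\mathcal{N}_{Z}\xi_{j}$: rescaling $X$ by $h$ pulls $h$ straight out. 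Tensoriality in $Z$ and in $\xi_{1},\xi_{2}$ is clear since $\mathcal{N}$ is a tensor and the pairing is bilinear over functions in those slots; tensoriality in $\wedge^{2}f$ follows because the defining expression is manifestly alternating and bilinear in $\xi_{1},\xi_{2}$.

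Next I would pin down the values. The claim has two parts: that $\mathcal{C}$ is a \emph{cubic} form — i.e. it only sees $(T_{1}^{*})^{3}\oplus(T_{2}^{*})^{3}$, so mixed terms like $T_1^{*}\otimes T_1^{*}\otimes T_2^{*}$ do not occur — and that it is $(\wedge^2 f)^{*}$-valued, which is what the computation of the previous paragraph already delivers via the alternating bilinear dependence on $\xi_1,\xi_2$. For the cubic structure: I would evaluate $\mathcal{C}$ on the various combinations of $X,Y,Z\in\{X,Y\}$ with $X\in\Gamma T_1$, $Y\in\Gamma T_2$, and show that whenever the three arguments are not all tangent to the same curvature direction, the result vanishes. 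The $Z$-slot is controlled directly by Lemma~\ref{lem:N}: $\mathcal{N}(T_{2})s_{1}=0=\mathcal{N}(T_{1})s_{2}$, so $\mathcal{N}_{Z}\xi_{j}$ vanishes against a curvature-sphere lift when $Z$ is transverse to that sphere's curvature direction. Writing $\xi_1\wedge\xi_2$ in terms of $\sigma_1\wedge\sigma_2$ (which spans $\wedge^2 f$ away from umbilics, using $f=s_1\oplus s_2$), it suffices to compute $\mathcal{C}(X,Y,Z)\sigma_1\wedge\sigma_2$. Then $\mathcal{N}_{Z}\sigma_1$ survives only for $Z\in\Gamma T_1$ and $\mathcal{N}_{Z}\sigma_2$ only for $Z\in\Gamma T_2$. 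This forces $Z$ into one of the two curvature directions; I then need the remaining factor $(\mathcal{D}_X\mathcal{D}_Y\sigma_i,\,\cdot\,)$ to kill the cross terms in $X,Y$, which again comes from the curvature-sphere conditions $d_X\sigma_1, d_Y\sigma_2\in\Gamma f$ (so that, e.g., $\mathcal{D}_Y\mathcal{D}_X\sigma_1$ stays inside $S_1$ and pairs trivially with the $S_2$-component picked out by $\mathcal{N}_{T_1}\sigma_2$) together with orthogonality of $S_1$ and $S_2$.

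The main obstacle I anticipate is the bookkeeping in the $\mathcal{D}_X\mathcal{D}_Y$ term when disentangling which summand of $S_1\oplus_{\perp}S_2$ each second-derivative component lies in; this is where one must use, beyond Lemma~\ref{lem:N}, the finer facts that $S_1=\langle\sigma_1,d_Y\sigma_1,d_Yd_Y\sigma_1\rangle$ and $S_2=\langle\sigma_2,d_X\sigma_2,d_Xd_X\sigma_2\rangle$ — so that covariant derivatives of $\sigma_1$ along $T_2$ and of $\sigma_2$ along $T_1$ remain within their respective $S_i$, while derivatives in the ``wrong'' direction pick up $\mathcal{N}$-components. It will be cleanest to reduce, at each stage, to the fixed lifts $\sigma_1,\sigma_2$ rather than general sections of $f$, exploit $\wedge^2 f = \langle\sigma_1\wedge\sigma_2\rangle$, and then feed everything through the orthogonal splitting so that every surviving pairing is between an $S_i$-component and an $S_i$-component. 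Once the vanishing of all mixed and ill-directioned terms is checked, what remains lives precisely in $((T_1^{*})^3\oplus(T_2^{*})^3)\otimes(\wedge^2 f)^{*}$, as claimed. All individual computations here are routine once the splitting is invoked; the content is entirely in the orthogonality of $S_1,S_2$ and the curvature-sphere conditions.
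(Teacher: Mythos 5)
There is a genuine gap in your tensoriality argument: you assert that tensoriality in $\xi_{1},\xi_{2}$ is ``clear since $\mathcal{N}$ is a tensor and the pairing is bilinear over functions in those slots'', but the expression is \emph{not} function-bilinear in $\xi_{1},\xi_{2}$, because each $\xi_{i}$ also sits inside the second covariant derivative:
$\mathcal{D}_{X}\mathcal{D}_{Y}(\lambda\xi) = \lambda\,\mathcal{D}_{X}\mathcal{D}_{Y}\xi + (X\lambda)\,\mathcal{D}_{Y}\xi + (Y\lambda)\,\mathcal{D}_{X}\xi + (XY\lambda)\,\xi$.
Showing that these error terms do not contribute is precisely the nontrivial point, and it is the first thing the paper's proof establishes ($\mathcal{D}_{X}\mathcal{D}_{Y}(\lambda\xi) = \lambda\,\mathcal{D}_{X}\mathcal{D}_{Y}\xi\ \bmod\ f^{\perp}$). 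The clean mechanism, which also disposes of your $hY$ computation in one line and which you never quite land on, is: every error term lies in $\Gamma f^{\perp}$ (indeed $\xi\in\Gamma f\le \Gamma f^{\perp}$, and $\mathcal{D}_{Y}\xi = d_{Y}\xi - \mathcal{N}_{Y}\xi\in\Gamma f^{\perp}$ because $f^{(1)}=f^{\perp}$ and $\mathcal{N}f\le\Omega^{1}(f)$ by Lemma~\ref{lem:N}), while $\mathcal{N}_{Z}\xi_{j}\in\Gamma f=\Gamma(f^{\perp})^{\perp}$, so every such pairing vanishes. Your first attempt at the $hY$ term, that $\mathcal{D}_{Y}\xi_{i}$ and $\mathcal{N}_{Z}\xi_{j}$ ``lie in orthogonal summands of the Lie cyclide splitting'', is false as stated: $f=s_{1}\oplus s_{2}$ straddles the splitting and $\mathcal{D}$ preserves each $S_{i}$, so both vectors generically have components in $S_{1}$ and in $S_{2}$. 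You notice this yourself, but then defer to an unexecuted component-tracking scheme rather than invoking $f\perp f^{\perp}$.

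The second half of your proposal (reduce to $\sigma_{1}\wedge\sigma_{2}$, kill the $Z$-slot cross terms via $\mathcal{N}(T_{1})s_{2}=0=\mathcal{N}(T_{2})s_{1}$, then kill the $X,Y$ cross terms) follows the paper's route and is sound in outline, but the same misattribution recurs: for $Z\in\Gamma T_{1}$ the surviving term is $-(\mathcal{D}_{X}\mathcal{D}_{Y}\sigma_{2},\mathcal{N}_{Z}\sigma_{1})$, where $\mathcal{N}_{Z}\sigma_{1}\in\Gamma s_{2}\le\Gamma S_{2}$ and $\mathcal{D}_{X}\mathcal{D}_{Y}\sigma_{2}\in\Gamma S_{2}$, so orthogonality of $S_{1}$ and $S_{2}$ says nothing here. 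What kills this term when $X$ or $Y$ lies in $T_{2}$ is again that $\mathcal{D}_{X}\mathcal{D}_{Y}\sigma_{2}\in\Gamma f^{\perp}$ (from the curvature sphere condition $d_{Y}\sigma_{2}\in\Gamma f$ for $Y\in\Gamma T_{2}$, together with Lemma~\ref{lem:N}) paired against $\mathcal{N}_{Z}\sigma_{1}\in\Gamma f$. Once you replace the splitting bookkeeping by this single observation, both halves of the lemma close.
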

\begin{proof}
The tensorial nature of $\mathcal{C}$ follows from the fact that for any $X,Y,Z\in\Gamma T\Sigma$, $\xi\in\Gamma f$ and any smooth function $\lambda$,
\[
\mathcal{D}_{X}\mathcal{D}_{Y}(\lambda \xi)= \mathcal{D}_{X}\mathcal{D}_{\lambda Y} \xi=\mathcal{D}_{\lambda X}\mathcal{D}_{Y} \xi = \lambda\, \mathcal{D}_{X}\mathcal{D}_{Y}\xi\, \bmod\, f^{\perp}
\]
and by Lemma~\ref{lem:N}, $\mathcal{N}_{Z}f\le f$. 

Let $Z\in\Gamma T_{1}$, $\sigma_{1}\in\Gamma s_{1}$ and $\sigma_{2}\in\Gamma s_{2}$. Then by Lemma~\ref{lem:N}, $\mathcal{N}_{Z}\sigma_{2}=0$, and thus for any $X,Y\in\Gamma T\Sigma$,
\[ \mathcal{C}(X,Y,Z)\sigma_{1}\wedge \sigma_{2} = -(\mathcal{D}_{X}\mathcal{D}_{Y}\sigma_{2},\mathcal{N}_{Z}\sigma_{1}).\]
If either of $X$ or $Y$ lies in $T_{2}$ then $\mathcal{D}_{X}\mathcal{D}_{Y}\sigma_{2}\in\Gamma f^{\perp}$ and, since $\mathcal{N}_{Z}f\le f$, this would imply that $\mathcal{C}(X,Y,Z)=0$. A similar argument shows that if $Z\in\Gamma T_{2}$ and either of $X$ and $Y$ lies in $T_{1}$ then $\mathcal{C}(X,Y,Z)=0$. Hence, 
\[ \mathcal{C}\in\Gamma (((T_{1}^{*})^{3} \oplus (T_{2}^{*})^{3})\otimes (\wedge^{2}f)^{*})\]
as required. 
\end{proof}

\begin{remark}
\label{rem:cubspf}
By evaluating the Darboux cubic form $\mathcal{C}$ on $\tau:=(\mathfrak{t} +\kappa_{1}\mathfrak{f})\wedge(\mathfrak{t} +\kappa_{2}\mathfrak{f})$ one obtains
\[ \mathcal{C}\tau = 
(\kappa_{2}-\kappa_{1})(\kappa_{1,u}E\,du^{3} + \kappa_{2,v}G\, dv^{3}),\]
in terms of curvature line coordinates $(u,v)$. Hence, $\mathcal{C}\tau$ is in the same conformal class as the cubic form used in~\cite[Theorem 1]{F2000ii}.
\end{remark}

\subsection{Ribaucour transforms}
\label{sec:ribaucour}
In~\cite{BH2006}, a modern treatment of Ribaucour transforms was developed in the realm of Lie sphere geometry. In this section we shall recall this construction and prove some results that will be useful to us later in Subsection~\ref{subsec:darb} when considering Darboux transforms.

Suppose that $f,\hat{f}:\Sigma\to \mathcal{Z}$ are pointwise distinct Legendre immersions enveloping a common sphere congruence $s_{0}:=f\cap\hat{f}$. Then $s_{0}^{\perp}/s_{0}$ is a rank 4 subbundle of $\underline{\mathbb{R}}^{4,2}/s_{0}$ that inherits a non-degenerate metric with signature $(3,1)$ from $\mathbb{R}^{4,2}$. Let
\[ \mathcal{N}_{f,\hat{f}}:= (f+\hat{f})/s_{0}.\]
Then $\mathcal{N}_{f,\hat{f}}$ is a rank 2 subbundle of $s_{0}^{\perp}/s_{0}$ and the induced metric $\langle .,.\rangle$ on $\mathcal{N}_{f,\hat{f}}$ is non-degenerate with signature $(1,1)$. We then have a well-defined orthogonal projection $\pi:s_{0}^{\perp}/s_{0}\to\mathcal{N}_{f,\hat{f}}$. From the contact condition on $f$ and $\hat{f}$, one quickly deduces the following lemma:

\begin{lemma}
\label{lem:envperp} 
$s_{0}^{(1)}\le (f+\hat{f})^{\perp}$ and $(f+\hat{f})^{(1)}\le s_{0}^{\perp}$.
\end{lemma}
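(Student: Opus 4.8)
The plan is to exploit the fact that enveloping a common sphere congruence $s_0$ means $s_0$ is a rank $1$ subbundle of both $f$ and $\hat f$, together with the defining property of a Legendre immersion, namely $f^{(1)}=f^\perp$ (and likewise $\hat f^{(1)}=\hat f^\perp$). First I would unwind what the two claimed inclusions say at the level of sections. For the first inclusion, pick any local section $\sigma_0\in\Gamma s_0$; since $s_0\le f$ and $s_0\le\hat f$, differentiating gives $d_X\sigma_0\in\Gamma f^{(1)}=\Gamma f^\perp$ and $d_X\sigma_0\in\Gamma\hat f^{(1)}=\Gamma\hat f^\perp$ for every $X\in\Gamma T\Sigma$. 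Hence $d_X\sigma_0\in\Gamma(f^\perp\cap\hat f^\perp)=\Gamma((f+\hat f)^\perp)$, which is exactly $s_0^{(1)}\le(f+\hat f)^\perp$, because $s_0^{(1)}$ is by definition generated by sections of $s_0$ (already in $(f+\hat f)^\perp$ since $f,\hat f$ are isotropic and $s_0\subseteq f\cap\hat f$) and their derivatives.

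For the second inclusion I would argue dually, using that $f$ and $\hat f$ are isotropic so $f\le f^\perp$ and $\hat f\le\hat f^\perp$. Take a section of $f+\hat f$, say $\xi=\varphi+\hat\varphi$ with $\varphi\in\Gamma f$, $\hat\varphi\in\Gamma\hat f$. To show $d_X\xi\in\Gamma s_0^\perp$ it suffices to show $(d_X\xi,\sigma_0)=0$ for every $\sigma_0\in\Gamma s_0$. Since $\sigma_0\in\Gamma f$ and $f$ is isotropic with $f^{(1)}=f^\perp$, we get $(\varphi,\sigma_0)=0$ identically, so $(d_X\varphi,\sigma_0)=-(\varphi,d_X\sigma_0)$; but $d_X\sigma_0\in\Gamma f^\perp$ from the first part (or directly from $f^{(1)}=f^\perp$), and $\varphi\in\Gamma f\le\Gamma f^{\perp\perp}$... more cleanly: $(\varphi,d_X\sigma_0)=0$ because $d_X\sigma_0\in\Gamma(f+\hat f)^\perp$ already proved, and $\varphi\in\Gamma(f+\hat f)$. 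Hence $(d_X\varphi,\sigma_0)=0$. The same computation with $\hat f$ in place of $f$ gives $(d_X\hat\varphi,\sigma_0)=0$. Adding, $(d_X\xi,\sigma_0)=0$, so $d_X\xi\in\Gamma s_0^\perp$; together with $\xi\in\Gamma(f+\hat f)\le\Gamma s_0^\perp$ this yields $(f+\hat f)^{(1)}\le s_0^\perp$.

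The only genuinely delicate point is bookkeeping about what the derived bundle $W^{(1)}$ contains: by the definition recalled in the Notation subsection it is the span of images of sections of $W$ and of their derivatives under the trivial connection $d$, so once the inclusions are checked on an arbitrary section and its first derivative they hold for $W^{(1)}$. I expect no substantive obstacle: the result is essentially a one-line consequence of $f^{(1)}=f^\perp$, $\hat f^{(1)}=\hat f^\perp$, isotropy of $f$ and $\hat f$, and $s_0\subseteq f\cap\hat f$, and the phrase ``one quickly deduces'' in the paper is accurate. If one wanted to be maximally efficient, both inclusions follow simultaneously from the single observation that $d_X$ applied to any section of $s_0$ lands in $f^\perp\cap\hat f^\perp=(f+\hat f)^\perp$, after which the second inclusion is the annihilator-reversing restatement of the first.
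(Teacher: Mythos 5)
Your argument is correct and is exactly the ``quick deduction'' the paper has in mind (the paper offers no written proof, only the remark that the lemma follows from the contact condition $f^{(1)}=f^{\perp}$, $\hat f^{(1)}=\hat f^{\perp}$). Both inclusions follow just as you say from $s_{0}\le f\cap\hat f$, the isotropy of $f$ and $\hat f$, and the product rule; if anything, the second inclusion admits the even shorter route $d_{X}\varphi\in\Gamma f^{\perp}\le\Gamma s_{0}^{\perp}$ and $d_{X}\hat\varphi\in\Gamma\hat f^{\perp}\le\Gamma s_{0}^{\perp}$, but your version is equally valid.
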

We now define a metric connection on $\mathcal{N}_{f,\hat{f}}$: for $\xi\in\Gamma(f+\hat{f})$,
\[ \nabla^{f,\hat{f}}(\xi + s_{0}) = \pi(d\xi + s_{0})\]
and make the following definition:

\begin{definition}
\label{def:rib}
If $\nabla^{f,\hat{f}}$ is flat then we say that $s_{0}$ is a Ribaucour sphere congruence and that $f$ and $\hat{f}$ are Ribaucour transforms of each other. 
\end{definition}

Now $f+\hat{f}$ is a rank 3 degenerate subbundle of $\underline{\mathbb{R}}^{4,2}$. If we let $l\le f+\hat{f}$ be a rank 2 subbundle of $f+\hat{f}$ such that $l\cap s_{0}=\{0\}$, then the induced metric on $l$ has signature $(1,1)$. This yields a splitting
\[ \underline{\mathbb{R}}^{4,2} = l\oplus l^{\perp}\]
and the trivial connection splits accordingly as 
\[ d = \mathcal{D}^{l} + \mathcal{D}^{l^{\perp}} +\mathcal{N}^{l,l^{\perp}},\]
where $\mathcal{D}^{l}$ is the induced connection on $l$, $\mathcal{D}^{l^{\perp}}$ is the induced connection on $l^{\perp}$ and 
\[\mathcal{N}^{l,l^{\perp}}= d - (\mathcal{D}^{l} + \mathcal{D}^{l^{\perp}})\in \Omega^{1}(Hom(l,l^{\perp})\oplus Hom(l^{\perp},l)).\]

\begin{proposition}
The vector bundle isomorphism 
\[ \psi:l\to \mathcal{N}_{f,\hat{f}}, \quad \xi\mapsto \xi+s_{0}\] 
preserves the metric and connection on $l$, i.e., $\psi^{*}\langle .,.\rangle = (.,.)|_{l\times l}$ and $\nabla^{f,\hat{f}}\circ \psi = \psi\circ \mathcal{D}^{l}$. 
\end{proposition}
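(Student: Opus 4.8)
The plan is to verify the two assertions separately and directly from the definitions, exploiting the decomposition $f + \hat f = s_0 \oplus l$ (as bundles, not orthogonally) together with Lemma~\ref{lem:envperp}. First I would observe that, since $l \cap s_0 = \{0\}$ and both have rank related by $\operatorname{rank}(f+\hat f) = 3$, every section of $f+\hat f$ decomposes uniquely as a section of $l$ plus a section of $s_0$; in particular $\psi$ is a well-defined bundle isomorphism onto $\mathcal{N}_{f,\hat f} = (f+\hat f)/s_0$ because $l$ is a complement to $s_0$ inside $f+\hat f$. For the metric statement, take $\xi, \eta \in \Gamma l$. Then $\langle \psi\xi, \psi\eta \rangle = \langle \xi + s_0, \eta + s_0\rangle$, and since the induced metric on $s_0^\perp/s_0$ is computed by lifting representatives and pairing in $\mathbb{R}^{4,2}$, this equals $(\xi,\eta)$; note $\xi,\eta \in \Gamma(f+\hat f) \subseteq \Gamma(s_0^\perp)$ by Lemma~\ref{lem:envperp}, so the representatives are admissible. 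This gives $\psi^*\langle.,.\rangle = (.,.)|_{l\times l}$ with essentially no computation.

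For the connection statement, fix $\xi \in \Gamma l \subseteq \Gamma(f+\hat f)$. By definition, $\nabla^{f,\hat f}(\psi\xi) = \nabla^{f,\hat f}(\xi + s_0) = \pi(d\xi + s_0)$, where $\pi : s_0^\perp/s_0 \to \mathcal{N}_{f,\hat f}$ is the orthogonal projection. On the other hand, $\psi(\mathcal{D}^l\xi) = \mathcal{D}^l\xi + s_0$, where $\mathcal{D}^l\xi$ is the $l$-component of $d\xi$ under the orthogonal splitting $\underline{\mathbb{R}}^{4,2} = l \oplus l^\perp$. So it suffices to show that $\pi(d\xi + s_0)$ and $\mathcal{D}^l\xi + s_0$ agree in $\mathcal{N}_{f,\hat f}$. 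Write $d\xi = \mathcal{D}^l\xi + (\mathcal{D}^{l^\perp}\xi + \mathcal{N}^{l,l^\perp}\xi)$, the second summand lying in $l^\perp$. Passing to $s_0^\perp/s_0$ (legitimate since $d\xi \in \Gamma(s_0^\perp)$ by Lemma~\ref{lem:envperp}, as $(f+\hat f)^{(1)} \le s_0^\perp$), we get $d\xi + s_0 = (\mathcal{D}^l\xi + s_0) + (\text{$l^\perp$-part} + s_0)$. Now $\mathcal{D}^l\xi \in \Gamma l \subseteq \Gamma(f+\hat f)$, so $\mathcal{D}^l\xi + s_0 \in \Gamma\mathcal{N}_{f,\hat f}$ and $\pi$ fixes it. The remaining task is to show $\pi$ kills the image of the $l^\perp$-part in $s_0^\perp/s_0$; equivalently, that $(l^\perp \cap s_0^\perp)/s_0$ (once we know $s_0 \subseteq l^\perp$, which holds because $s_0 \le f+\hat f$ and $s_0 \le (f+\hat f)^\perp$ by Lemma~\ref{lem:envperp}, whence $s_0 \le l^\perp$) is orthogonal to $\mathcal{N}_{f,\hat f} = (f+\hat f)/s_0$. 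This is immediate: $l^\perp$ is orthogonal to $l$, and $(f+\hat f) = l \oplus s_0 \subseteq l \oplus l^\perp$ with $s_0 \subseteq l^\perp$, so in $s_0^\perp/s_0$ the class of any section of $l^\perp \cap s_0^\perp$ pairs to zero against the class of any section of $l$, hence against all of $\mathcal{N}_{f,\hat f}$. Therefore $\pi(d\xi + s_0) = \mathcal{D}^l\xi + s_0 = \psi(\mathcal{D}^l\xi)$, which is the desired naturality.

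The only point requiring care — and the step I expect to be the main obstacle — is the bookkeeping of which vectors lie in $s_0^\perp$ so that the quotient operations and the metric on $\mathcal{N}_{f,\hat f}$ are genuinely well-defined; this is precisely what Lemma~\ref{lem:envperp} is designed to supply, together with the observation that $s_0 \subseteq l^\perp$. Once these inclusions are in place, both halves of the proposition reduce to the tautology that $\pi$ is the orthogonal projection along $l^\perp$ (modulo $s_0$) and that $\mathcal{D}^l$ is defined by the same orthogonal splitting, restricted to the subbundle $f + \hat f$. I would present the argument in roughly this order: (i) note $s_0 \subseteq l \cap (f+\hat f)^\perp$ forces $s_0 \subseteq l^\perp$ and set up the splitting; (ii) check well-definedness of $\psi$ as a complement map; (iii) verify the metric identity by lifting representatives; (iv) verify the connection identity by the component computation above, using that $\pi$ annihilates $(l^\perp \cap s_0^\perp)/s_0$.
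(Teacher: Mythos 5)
Your proposal is correct and follows essentially the same route as the paper: the metric identity is verified by lifting representatives, and the connection identity reduces to the equality $\pi(d\xi+s_{0})=\mathcal{D}^{l}\xi+s_{0}$, which the paper simply asserts and you justify by checking that the $l^{\perp}$-component of $d\xi$ lies in $l^{\perp}\cap s_{0}^{\perp}$ and hence projects to zero under $\pi$. The extra bookkeeping (that $s_{0}\le l^{\perp}$ and that Lemma~\ref{lem:envperp} makes the quotient operations legitimate) is a faithful elaboration of what the paper's two-line computation leaves implicit.
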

\begin{proof}
Suppose that $\xi_{1},\xi_{2}\in\Gamma l$. Then 
\[ \langle \psi(\xi_{1}), \psi(\xi_{2})\rangle = \langle \xi_{1}+s_{0},  \xi_{2}+s_{0}\rangle = (\xi_{1},\xi_{2}).\]
Hence, the induced metric on $l$ is isometric to $\langle .,.\rangle$ via $\psi$. Furthermore, for $\xi\in \Gamma l$, 
\[ \nabla^{f,\hat{f}}(\psi(\xi)) = \pi(d\xi + s_{0}) = \mathcal{D}^{l}\xi + s_{0} = \psi(\mathcal{D}^{l}\xi).\]
Hence, $\psi$ is connection preserving. 
\end{proof}
This gives rise to an alternative characterisation of Ribaucour transforms:

\begin{corollary}
\label{cor:lflat}
$f$ and $\hat{f}$ are Ribaucour transforms of each other if and only if the induced connection $\mathcal{D}^{l}$ is flat for some (and hence all) $l\le f+\hat{f}$ of rank 2 such that $l\cap s_{0}=\{0\}$. 
\end{corollary}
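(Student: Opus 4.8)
The plan is to derive Corollary~\ref{cor:lflat} directly from the preceding Proposition together with Definition~\ref{def:rib}. First I would observe that the Proposition establishes, for any choice of rank~2 subbundle $l\le f+\hat{f}$ with $l\cap s_{0}=\{0\}$, a vector bundle isomorphism $\psi:l\to\mathcal{N}_{f,\hat{f}}$ intertwining the connections $\mathcal{D}^{l}$ and $\nabla^{f,\hat{f}}$. Since curvature is a pointwise tensorial invariant that is carried along by any connection-preserving bundle isomorphism, $\mathcal{D}^{l}$ is flat if and only if $\nabla^{f,\hat{f}}$ is flat. By Definition~\ref{def:rib}, the latter is precisely the condition that $s_{0}$ be a Ribaucour sphere congruence, i.e. that $f$ and $\hat{f}$ be Ribaucour transforms of each other. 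This immediately gives the equivalence stated in the Corollary, and moreover shows that flatness of $\mathcal{D}^{l}$ holds for \emph{one} such $l$ if and only if it holds for \emph{all} of them, since in every case it is equivalent to the single fixed condition that $\nabla^{f,\hat{f}}$ be flat.

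In writing this up I would first note that such $l$ exist: because $f+\hat{f}$ is a rank~3 bundle containing the rank~1 null bundle $s_{0}$, and the induced metric on $f+\hat{f}$ is degenerate with radical $s_{0}$, one may locally pick any complement to $s_{0}$ inside $f+\hat{f}$; on such a complement the metric is non-degenerate of signature $(1,1)$, exactly as recorded just before the Proposition. I would then spell out the curvature-transport argument: if $R^{\mathcal{D}^{l}}$ and $R^{\nabla^{f,\hat{f}}}$ denote the respective curvature $2$-forms, then for $\xi\in\Gamma l$ one has $\psi\bigl(R^{\mathcal{D}^{l}}(X,Y)\xi\bigr)=R^{\nabla^{f,\hat{f}}}(X,Y)\psi(\xi)$ for all $X,Y\in\Gamma T\Sigma$, which follows by applying $\nabla^{f,\hat{f}}\circ\psi=\psi\circ\mathcal{D}^{l}$ twice and using that $\psi$ commutes with the Lie bracket term as well. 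Since $\psi$ is a bundle isomorphism, $R^{\mathcal{D}^{l}}\equiv 0\iff R^{\nabla^{f,\hat{f}}}\equiv 0$.

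There is no real obstacle here; the statement is a formal consequence of the Proposition, and the only point requiring any care is the ``(and hence all)'' clause, which one must be careful to attribute to the fact that the target $\mathcal{N}_{f,\hat{f}}$ and its connection $\nabla^{f,\hat{f}}$ are defined intrinsically from $f$ and $\hat{f}$ alone, independently of the auxiliary choice of $l$. Thus the chain of equivalences reads: $\mathcal{D}^{l}$ flat for some $l$ $\iff$ $\nabla^{f,\hat{f}}$ flat (by the Proposition applied to that $l$) $\iff$ $f,\hat{f}$ Ribaucour transforms (by Definition~\ref{def:rib}) $\iff$ $\nabla^{f,\hat{f}}$ flat $\iff$ $\mathcal{D}^{l'}$ flat for every $l'$ (by the Proposition applied to each such $l'$). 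This completes the argument.
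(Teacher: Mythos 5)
Your proposal is correct and is essentially the argument the paper intends: the corollary is an immediate consequence of the preceding Proposition, since the connection-preserving isomorphism $\psi:l\to\mathcal{N}_{f,\hat{f}}$ transports curvature, so flatness of $\mathcal{D}^{l}$ for any admissible $l$ is equivalent to flatness of the $l$-independent connection $\nabla^{f,\hat{f}}$, which is Definition~\ref{def:rib}. Your handling of the ``(and hence all)'' clause matches the paper's logic exactly.
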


\begin{remark}
Suppose that $l\cap s_{0}=\{0\}$ and let $s:=l\cap f$ and $\hat{s}:=l\cap\hat{f}$. Then the condition that $\mathcal{D}^{l}$ be flat is equivalent to requiring $s$ and $\hat{s}$ to be parallel subbundles of $\mathcal{D}^{l}$. In fact, $s$ being a parallel subbundle of $\mathcal{D}^{l}$ implies that $\hat{s}$ is parallel as well, and conversely.
\end{remark}

It was shown in~\cite{BH2006} that Definition~\ref{def:rib} is equivalent to the classical definition of Ribaucour transform~\cite{B1929,DT2002,DT2003,E1962,T2002}, that is, that the curvature directions of $f$ and $\hat{f}$ correspond. Suppose that $f$ and $\hat{f}$ are umbilic-free and let $s_{1},s_{2}\le f$ denote the curvature sphere congruences of $f$ and let $\hat{s}_{1},\hat{s}_{2}\le \hat{f}$ denote the curvature sphere congruences of $\hat{f}$. Then we may assume that $T_{i}$ is the curvature subbundle of $s_{i}$ and $\hat{s}_{i}$ for $i\in\{1,2\}$. Let 
\[ l_{i}:= s_{i}\oplus \hat{s}_{i}.\]
Then for any $\xi\in\Gamma l_{i}$ we have that $d\xi (T_{i})\le (f+\hat{f})$. Now let 
\[ s_{\infty} := l_{1}\cap l_{2}.\]
Then for any $\sigma_{\infty} \in \Gamma s_{\infty}$, we have that $d\sigma_{\infty}(T_{1})\le f+\hat{f}$, since $\sigma_{\infty}\in \Gamma l_{1}$ and $d\sigma_{\infty}(T_{2})\le f+\hat{f}$, since $\sigma_{\infty}\in \Gamma l_{2}$. Therefore, as $T\Sigma =T_{1}\oplus T_{2}$, $d\sigma_{\infty}\in\Omega^{1}(f+\hat{f})$. In fact $s_{\infty}$ is the unique point map in $\mathbb{P}(f+\hat{f})$ with the property that 
\[ s_{\infty}^{(1)}\le f+\hat{f}\]
and this motivates the following definition:

\begin{definition}
\label{def:envpoint}
We call $s_{\infty}$ the enveloping point of $f+\hat{f}$.
\end{definition}

\section{Lie applicable surfaces}
\label{sec:lieapp}
In this section we shall adopt the gauge theoretic viewpoint of Lie applicable surfaces laid out by Clarke~\cite{C2012i}. From this viewpoint, Lie applicability corresponds to the existence of a vector-bundle valued 1-form. The existence of such a 1-form gives rise to a set of uncountably many such 1-forms. In order to work with such a set, we geometrically derive a unique member called the middle potential. This is analogous to a potential used in~\cite[\S2.4.1]{C2012i} for studying projectively applicable surfaces.  

Given a Legendre immersion $f:\Sigma \to \mathcal{Z}$ we may consider the subbundle $f\wedge f^{\perp}$ of $\wedge^{2}\underline{\mathbb{R}}^{4,2}$. Now suppose that $\eta\in \Omega^{1}(f\wedge f^{\perp})$, i.e., $\eta$ is a 1-form taking values in $f\wedge f^{\perp}$. Then for any section $\sigma\in\Gamma f$, since $d\sigma\in \Omega^{1}(f^{\perp})$, we have that $\eta(X)d_{Y}\sigma \in \Gamma f$,
for any $X,Y\in \Gamma T\Sigma$. Furthermore, since $\eta f =0$, we have that $\eta(X)d_{Y}\sigma$ is tensorial in $\sigma$. Thus, for given $X,Y\in \Gamma T\Sigma$, we have an endomorphism on $f$ defined by 
\[ \sigma \mapsto \eta(X)d_{Y}\sigma.\]
Therefore we may take the trace of this endomorphism and this gives rise to a 2-tensor $q$ defined by 
\[ q(X,Y) = tr(\sigma \mapsto \eta(X)d_{Y}\sigma).\]
We are now in a position to state the main definition of this section:

\begin{definition}
\label{def:lieapp}
We say that $f$ is a Lie applicable surface if there exists a closed $\eta\in \Omega^{1}(f\wedge f^{\perp})$ such that $[\eta\wedge\eta]=0$ and $q$ is non-zero. 

Furthermore, if $q$ is non-degenerate (respectively, degenerate) on a dense open subset of $\Sigma$ we say that $f$ is an $\Omega$-surface ($\Omega_{0}$-surface). 
\end{definition}

Suppose now that $\eta\in \Omega^{1}(f\wedge f^{\perp})$ is closed. Then for any $\tau\in \Gamma(\wedge^{2}f)$, $\tilde{\eta}:= \eta - d\tau$ is a closed 1-form with values in $f\wedge f^{\perp}$. In this case we say that $\tilde{\eta}$ and $\eta$ are \textit{gauge equivalent}\footnote{In Section~\ref{sec:trafos} we shall see that each closed 1-form $\eta$ gives rise to a 1-parameter family of flat connections. Moreover, we shall see that if two 1-forms $\eta$ and $\tilde{\eta}$ are gauge equivalent then the resulting flat connections are related by a gauge transformation.}. This yields an equivalence relation on closed 1-forms with values in $f\wedge f^{\perp}$ and we call the equivalence class
\[ [\eta] := \{\eta - d\tau\, : \, \tau\in \Gamma (\wedge^{2}f)\}\]
the \textit{gauge orbit of $\eta$}. Of course, any Legendre immersion admits 1-forms with trivial gauge orbit, namely, $d\tau$ for any $\tau\in \Gamma(\wedge^{2}f)$. However, assuming that the quadratic differential $q$ is non-zero in Definition~\ref{def:lieapp} ensures that the associated 1-form is non-trivial:

\begin{lemma}
\label{lem:trivq}
If $[\eta]=[0]$ at $p\in \Sigma$ then $q=0$ at $p$.
\end{lemma}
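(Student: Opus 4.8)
The statement to prove is that if $[\eta] = [0]$ at a point $p \in \Sigma$ — meaning $\eta_p = (d\tau)_p$ for some $\tau \in \Gamma(\wedge^2 f)$ — then the quadratic differential $q$ vanishes at $p$. The plan is to show that $q$ depends only on the gauge orbit of $\eta$, i.e., that replacing $\eta$ by $\eta - d\tau$ does not change $q$; since $q$ is manifestly $0$ when $\eta = 0$, this gives the result. So the crux is the computation that for $\tau \in \Gamma(\wedge^2 f)$, the contribution of $d\tau$ to $q$ vanishes pointwise.

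First I would fix $p$, choose $\tau \in \Gamma(\wedge^2 f)$ with $\eta_p = (d\tau)_p$, and write $\tau = \xi_1 \wedge \xi_2$ locally for sections $\xi_1, \xi_2 \in \Gamma f$ (possible since $\wedge^2 f$ has rank $1$). Then $d_X \tau = d_X\xi_1 \wedge \xi_2 + \xi_1 \wedge d_X \xi_2$, and since $d\xi_i \in \Omega^1(f^\perp)$ (because $f^{(1)} = f^\perp$ for a Legendre immersion), $(d\tau)(X) \in \Gamma(f \wedge f^\perp)$ as expected. Now I must evaluate $q(X,Y) = \operatorname{tr}(\sigma \mapsto (d_X\tau)\, d_Y\sigma)$ at $p$. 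The key observation is that the endomorphism $\sigma \mapsto (a\wedge b) c$, with $a \in f$, $b \in f^\perp$, $c = d_Y\sigma \in f^\perp$, unwinds via the bivector action: $(d_X\xi_1 \wedge \xi_2)(d_Y\sigma) = (d_X\xi_1, d_Y\sigma)\,\xi_2 - (\xi_2, d_Y\sigma)\, d_X\xi_1$, and the second term lies in $f^\perp$, not $f$ — but wait, $q$ is defined as the trace of an endomorphism of $f$, so only terms landing back in $f$ contribute, and the apparent $f^\perp$-valued term must be discarded (or rather, the endomorphism is $\sigma \mapsto \eta(X)d_Y\sigma$ taken $\bmod$ nothing since $\eta(X)d_Y\sigma \in \Gamma f$ automatically). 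I would carefully recompute: since $\xi_2 \in \Gamma f$ and $d_Y\sigma \in \Gamma f^\perp$, $(\xi_2, d_Y\sigma) = 0$; and $(d_X\xi_1, d_Y\sigma)$ is a scalar. So $(d_X\xi_1 \wedge \xi_2)(d_Y\sigma) = (d_X\xi_1, d_Y\sigma)\,\xi_2$, and similarly $(\xi_1 \wedge d_X\xi_2)(d_Y\sigma) = (d_X\xi_2, d_Y\sigma)\,\xi_1$. Hence as an endomorphism of $f$, $\sigma \mapsto (d_X\tau)(d_Y\sigma)$ sends $\sigma$ into $\langle \xi_1, \xi_2 \rangle = f$ with matrix entries built from $(d_X\xi_i, d_Y\sigma)$.

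The trace computation then proceeds by picking a convenient basis. Write $\sigma = a\xi_1 + b\xi_2$ for functions $a,b$; then $d_Y\sigma = a\, d_Y\xi_1 + b\, d_Y\xi_2 \pmod f$, and the endomorphism's matrix in the basis $\{\xi_1,\xi_2\}$ has diagonal entries involving $(d_X\xi_1, d_Y\xi_1)$ from the $\xi_1$-component and $(d_X\xi_2, d_Y\xi_2)$ from the $\xi_2$-component — but the off-diagonal structure and the antisymmetry hidden in $\tau = \xi_1\wedge\xi_2$ versus $-\xi_2\wedge\xi_1$ should force a cancellation making the trace $0$. More precisely, I expect $q(X,Y) = (d_X\xi_1, d_Y\xi_2) - (d_X\xi_2, d_Y\xi_1)$ up to sign, which need not vanish in general; so the honest route is to instead show directly that this expression, evaluated with $d\tau$ in place of a general $\eta$, is symmetric in a way that… Actually the cleanest path: $q(X,Y)$ arising from $\eta = d\tau$ is a total derivative — one computes $\operatorname{tr}(\sigma \mapsto (d_X\tau) d_Y\sigma) = \tfrac12 X\big(\text{something}(Y)\big) + \dots$, but since we only need pointwise vanishing at a single $p$, and $q$ is tensorial, the right statement is that $q$ is actually well-defined on gauge orbits. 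The main obstacle — and where I would spend the effort — is this tensoriality/gauge-invariance verification: showing $\operatorname{tr}(\sigma \mapsto (d\tau)(X)\,d_Y\sigma) = 0$ pointwise for all $\tau \in \Gamma(\wedge^2 f)$. Once that identity is in hand, the lemma is immediate: $[\eta]_p = [0]_p$ means $\eta_p$ lies in the span of values of $d\tau$'s, so $q_p = 0$. I anticipate the computation closing because the $\xi_1 \wedge \xi_2$ structure makes the relevant $2\times 2$ matrix trace-free by the same skew-symmetry that makes $f \wedge f^\perp \subset \mathfrak{o}(4,2)$, and $d_Y\sigma$ being valued in $f^\perp$ (orthogonal to $f$) kills precisely the terms that would otherwise spoil it.
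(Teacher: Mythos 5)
Your reduction of the lemma to the identity $\operatorname{tr}(\sigma\mapsto(d_X\tau)\,d_Y\sigma)=0$ for $\tau\in\Gamma(\wedge^2 f)$ is the right first move, and your computation of the endomorphism as $\sigma\mapsto(d_X\xi_1,d_Y\sigma)\xi_2-(d_X\xi_2,d_Y\sigma)\xi_1$, hence of its trace as $\pm\big((d_X\xi_1,d_Y\xi_2)-(d_X\xi_2,d_Y\xi_1)\big)$, is essentially correct (up to a sign slip in the second term). But at exactly this point there is a genuine gap: you concede that this expression ``need not vanish in general'' and then appeal to ``the same skew-symmetry that makes $f\wedge f^\perp\subset\mathfrak{o}(4,2)$'' to close the argument. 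Skew-symmetry alone does not make that $2\times 2$ matrix trace-free, and your detour through ``$q$ is well-defined on gauge orbits'' is circular --- in the paper that is a corollary \emph{of} this lemma. The missing ingredient is the curvature-sphere geometry. To finish along your route: since $\wedge^2 f$ has rank one you may take $\xi_1=\sigma_1\in\Gamma s_1$ and $\xi_2=\sigma_2\in\Gamma s_2$; then for $X,Y\in T_1$ both $d_X\sigma_1$ and $d_Y\sigma_1$ lie in $\Gamma f$, which is isotropic and orthogonal to $f^\perp$, so both terms of the trace vanish (symmetrically on $T_2$), while for $X\in T_1$, $Y\in T_2$ one term dies by isotropy of $f$ and the other because $d_X\sigma_2\in\Gamma f_1$, $d_Y\sigma_1\in\Gamma f_2$ and $f_1/f\perp f_2/f$.

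The paper avoids this case analysis with a slicker observation that you do not use: since $\tau$ annihilates $f^\perp$, one has $(d_X\tau)d_Y\sigma=-\tau(d_Xd_Y\sigma)$, and for $X_p,Y_p$ in the curvature space $T_{s(p)}$ the second derivative $(d_Xd_Y\sigma)_p$ lies in $s^\perp(p)$, which $\tau(p)$ maps into $s(p)$; if moreover $\sigma(p)\in s(p)$ then $(d_Xd_Y\sigma)_p\in f^\perp(p)$ and the image is zero. The endomorphism is therefore nilpotent ($f(p)\to s(p)\to 0$) and hence trace-free. Either route works, but some form of the curvature-sphere input is indispensable, and it is absent from your proposal.
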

\begin{proof}
Suppose that $s(p)$ is a curvature sphere congruence of $f$ at $p$ with associated curvature space $T_{s(p)}$. Then, for $\sigma\in \Gamma f$ and $X,Y\in \Gamma T\Sigma$ such that $X_{p},Y_{p}\in T_{s(p)}$, one has that $(d_{X}d_{Y}\sigma)_{p}\in s^{\perp}(p)$. Therefore, for any $\tau\in \Gamma (\wedge^{2}f)$, 
\[ (d_{X_{p}}\tau)d_{Y_{p}}\sigma = - \tau(p) (d_{X}d_{Y}\sigma)_{p} \in s(p).\]
Furthermore, if $\sigma(p)\in s(p)$ then $(d_{X}d_{Y}\sigma)_{p}\in f^{\perp}(p)$ and so $(d_{X_{p}}\tau)d_{Y_{p}}\sigma$ vanishes. Hence, $q=0$ at $p$. 
\end{proof}

\begin{corollary}
$q$ is well defined on gauge orbits, i.e., if $\tilde{\eta}\in [\eta]$ then $\tilde{q}=q$, where $\tilde{q}$ is the quadratic form associated to $\tilde{\eta}$.
\end{corollary}
\begin{proof}
This follows from the fact that $\tilde{\eta} - \eta = d\tau$ for some $\tau\in \Gamma(\wedge^{2}f)$. 
\end{proof}

Now suppose that $f$ is umbilic-free. That is, there are two distinct curvature sphere congruences $s_{1}$ and $s_{2}$ such that $s_{1}\cap s_{2}=\{0\}$. Let $T_{i}\le T\Sigma$ denote the corresponding rank 1 curvature subbundle for $s_{i}$, i.e., for any $X_{i}\in \Gamma T_{i}$ and $\sigma_{i}\in \Gamma s_{i}$, 
\[ d_{X_{i}}\sigma_{i}\in \Gamma f.\]
Recall that each curvature subbundle $T_{i}$ induces a rank 3 subbundle $f_{i}$ of $f^{\perp}$. The following proposition shows that in the umbilic-free case, we may drop the condition that $[\eta\wedge\eta]=0$ in Definition~\ref{def:lieapp}:

\begin{proposition}
\label{prop:mceqn}
$\eta$ is closed if and only if $\eta$ satisfies the Maurer Cartan equation. In this case, $\eta(T_{i})\le f\wedge f_{i}$ and $[\eta\wedge \eta]=0$. 
\end{proposition}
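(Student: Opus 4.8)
The assertion has three parts: (i) $\eta$ closed $\iff$ $\eta$ satisfies the Maurer–Cartan equation $d\eta + \tfrac12[\eta\wedge\eta]=0$; (ii) $\eta(T_i)\le f\wedge f_i$; and (iii) $[\eta\wedge\eta]=0$. Part (i) is immediate once one knows (iii) and the bracket term vanishes, so the logical heart is to show that the values of a closed $\eta$ are forced into $f\wedge f_i$ on each curvature direction, and from there that $[\eta\wedge\eta]$ vanishes. I would therefore prove (ii) first, deduce (iii), and read off (i) at the end.

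\textbf{Step 1: pin down $\eta(T_i)$.} Write $\eta\in\Omega^1(f\wedge f^\perp)$. Since $f = s_1\oplus s_2$ and $f^\perp = f_1 + f_2$ (with $f_i = f\oplus d\sigma(T_i)$ for a suitable section $\sigma$), decompose $\eta(X)$ for $X\in\Gamma T_1$. The claim is $\eta(X)\in f\wedge f_1$, equivalently that the ``$f\wedge (f_2/f)$-component'' of $\eta(X)$ vanishes. To see this, test closedness of $\eta$ against an appropriate section: take $\sigma_2\in\Gamma s_2$, so that $d_{X}\sigma_2\in\Gamma f$ for $X\in\Gamma T_1$ is \emph{false} in general — rather $d_Y\sigma_2\in\Gamma f$ for $Y\in\Gamma T_2$, while $d_X\sigma_2$ has a genuine $f_2/f$ component. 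The strategy is to apply $d\eta=0$ to pairs $(X,Y)$ with $X\in\Gamma T_1$, $Y\in\Gamma T_2$, acting on curvature-sphere sections $\sigma_1,\sigma_2$, and to use that $\eta f = 0$ together with Lemma~\ref{lem:N}-type facts ($d_X\sigma_1, d_Y\sigma_2\in\Gamma f$) to kill most terms. The surviving terms force the offending component of $\eta$ to vanish. Concretely, from $0 = (d\eta)(X,Y)\sigma = d_X(\eta(Y)\sigma) - d_Y(\eta(X)\sigma) - \eta([X,Y])\sigma$, and using $\eta(Y)\sigma\in\Gamma f$, one extracts the component of $\eta(X)d_Y\sigma$ transverse to $f$ inside $f^\perp$ and shows it lies in $f_1/f$ only; since this holds for all relevant $\sigma$, $\eta(X)$ cannot have a component in $f\wedge (f_2/f)$.

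\textbf{Step 2: deduce $[\eta\wedge\eta]=0$.} Granting Step 1, $\eta(X)\in f\wedge f_1$ for $X\in\Gamma T_1$ and $\eta(Y)\in f\wedge f_2$ for $Y\in\Gamma T_2$. Recall that elements of $f\wedge f^\perp$, viewed as endomorphisms, map $f^\perp\to f$ and annihilate everything in $(f^\perp)^\perp = f$; more precisely $(a\wedge b)c = (a,c)b-(b,c)a$ with $a\in\Gamma f$, $b\in\Gamma f^\perp$ gives an endomorphism whose image is in $f$ (when $c\perp a$) and which vanishes on $f$. Now $[\eta\wedge\eta]$ is a $2$-form whose value on $(X,Y)$ is (twice) the commutator $[\eta(X),\eta(Y)]$ of such endomorphisms. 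Since both $\eta(X)$ and $\eta(Y)$ have image in $f$ and kill $f$, their composite in either order is zero: $\eta(X)\eta(Y) = 0 = \eta(Y)\eta(X)$. Hence the commutator vanishes identically, so $[\eta\wedge\eta]=0$ — and this argument does not even use Step 1, only $\eta\in\Omega^1(f\wedge f^\perp)$ and that $f$ is isotropic, so $f\subseteq f^\perp$. (This is the reassuring point: in the umbilic-free case the bracket condition in Definition~\ref{def:lieapp} is automatic.)

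\textbf{Step 3: conclude (i).} The Maurer–Cartan equation for a $1$-form valued in $\mathfrak{o}(4,2)$ reads $d\eta + \tfrac12[\eta\wedge\eta] = 0$. By Step 2, $[\eta\wedge\eta]=0$, so this reduces to $d\eta = 0$. Thus closedness and Maurer–Cartan are equivalent, with the refinement $\eta(T_i)\le f\wedge f_i$ coming from Step 1. \textbf{Expected main obstacle.} The genuine work is Step 1: the bookkeeping of which second-derivative terms land in $f$, $f_i$, or only in $f^\perp$, and teasing out the precise component of $\eta$ that closedness annihilates. One must be careful that $d_X\sigma_2$ for $X\in\Gamma T_1$ is \emph{not} in $f$, so the naive cancellation fails and one has to track the $f_2/f$-valued piece explicitly; choosing good lifts $\sigma_i$ of the curvature spheres and using the splitting $d = d_1 + d_2$ into partial connections (as already introduced in the excerpt) should make this manageable, but it is where all the care is needed.
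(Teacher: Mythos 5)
There is a genuine error in Step 2, and it undermines the logical structure of the whole argument. For $a\in\Gamma f$ and $b\in\Gamma f^{\perp}$ the endomorphism $a\wedge b$ has image $\langle a,b\rangle$, which lies in $f^{\perp}$ but \emph{not} in $f$ unless $b\in\Gamma f$; so the values of $\eta$ do not have image in $f$, and the composite $\eta(X)\eta(Y)$ does not vanish. Concretely, for $a,a'\in\Gamma f$ and $b,b'\in\Gamma f^{\perp}$ one computes $[a\wedge b,\,a'\wedge b'] = (b,b')\,a\wedge a'$, which is a generally non-zero section of $\wedge^{2}f$. Thus $[\eta\wedge\eta]=0$ is \emph{not} automatic for arbitrary $\eta\in\Omega^{1}(f\wedge f^{\perp})$ — it genuinely requires Step 1: once you know $\eta(T_{1})\le f\wedge f_{1}$ and $\eta(T_{2})\le f\wedge f_{2}$, the $f^{\perp}$-legs of $\eta(X_{1})$ and $\eta(X_{2})$ are orthogonal (because $f_{1}/f\perp f_{2}/f$), so the factor $(b,b')$ above vanishes and $[\eta(X_{1}),\eta(X_{2})]=0$; together with the trivial vanishing of $[\eta(X_{i}),\eta(X_{i})]$ this gives $[\eta\wedge\eta]=0$. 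Your parenthetical claim that the bracket condition holds independently of closedness is therefore false, and if it were true the hypothesis of the proposition would be irrelevant to that conclusion.

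This error also leaves a gap in the equivalence ``closed $\iff$ Maurer--Cartan'': since you cannot get $[\eta\wedge\eta]=0$ for free, you must explain why the Maurer--Cartan equation alone already yields the identity $\eta(X_{i})d_{X_{j}}\sigma_{i}=0$ needed for Step 1. The paper's fix is to observe that $[\eta\wedge\eta]f=0$ always (because $\eta f=0$), so that $(d\eta+\tfrac12[\eta\wedge\eta])f=(d\eta)f$ and either hypothesis gives the same restriction to $f$; your proposal never makes this observation. Two smaller points: in Step 1 your displayed formula for $(d\eta)(X,Y)\sigma$ is actually the formula for $d(\eta\sigma)(X,Y)$, which is identically zero since $\eta\sigma=0$ and so carries no information — the correct expansion is $(d\eta)(X,Y)\sigma=\eta(X)d_{Y}\sigma-\eta(Y)d_{X}\sigma$, from which the paper's argument proceeds; and the step from $\eta(X_{i})d_{X_{j}}\sigma_{i}=0$ to $\eta(X_{i})\in\Gamma(f\wedge f_{i})$ (using that $d_{X_{j}}\sigma_{i}$ spans $f_{j}/f$ and that $(f_{j}/f)^{\perp}\cap(f^{\perp}/f)=f_{i}/f$) is the substantive content of Step 1 and is only gestured at in your sketch.
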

\begin{proof}
Since $\eta f \equiv 0$, we have that 
\[ (d\eta + \frac{1}{2}[\eta\wedge \eta])f =(d\eta) f.\]
Let $X_{i}\in \Gamma T_{i}$ and $X_{j}\in\Gamma T_{j}$ for $i\neq j$ and $\sigma_{i}\in \Gamma s_{i}$. Then
\begin{align*} 
d\eta(X_{i},X_{j})\sigma_{i} &= (d_{X_{i}}(\eta(X_{j})) - d_{X_{j}}(\eta(X_{i})) - \eta([X_{i},X_{j}]))\sigma_{i} \\
&= d_{X_{i}}(\eta(X_{j})\sigma_{i}) - \eta(X_{j})d_{X_{i}}\sigma_{i}  - d_{X_{j}}(\eta(X_{i})\sigma_{i}) + \eta(X_{i})d_{X_{j}}\sigma_{i}\\
&= - \eta(X_{j})d_{X_{i}}\sigma_{i} + \eta(X_{i})d_{X_{j}}\sigma_{i},
\end{align*}
using again that $\eta f\equiv 0$. Since $s_{i}$ is a curvature sphere, $d_{X_{i}}\sigma_{i}\in \Gamma f$ and thus $\eta(X_{j})d_{X_{i}}\sigma_{i}=0$. Therefore assuming that $\eta$ satisfies the Maurer-Cartan equation or that it is closed implies that for all $i\neq j$, $X_{i}\in\Gamma T_{i}$, $X_{j}\in \Gamma T_{j}$ and $\sigma_{i}\in \Gamma s_{i}$, 
\[ 0 = \eta(X_{i}) d_{X_{j}}\sigma_{i}.\]
Thus, $\eta(X_{i})\in \Gamma (f\wedge f_{i})$ and 
\[ [\eta(X_{i}), \eta(X_{j})]=0.\]
Thus, 
\[ [\eta\wedge \eta](X_{i},X_{j}) = 2[\eta(X_{i}), \eta(X_{j})]=0.\]
Therefore, since $X_{1}$ and $X_{2}$ form a basis for $T\Sigma$, we have that $[\eta\wedge \eta]=0$. Hence, 
\[ d\eta + \frac{1}{2}[\eta\wedge \eta] = d\eta \]
and the result follows.
\end{proof}

\begin{corollary}
\label{cor:qsym}
$q$ is symmetric with $q(T_{1}, T_{2})=0$. Hence, $q$ is a quadratic differential with respect to the conformal structure $c$. 
\end{corollary}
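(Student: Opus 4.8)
The plan is to reduce both assertions to a single observation about the endomorphism of $f$ whose trace is $q$: along mixed curvature directions it factors through orthogonal pieces of the Lie cyclide data that do not meet. First I would note that, since $\eta(X)$ takes values in $f\wedge f^{\perp}$, it annihilates $f$ and carries $f^{\perp}$ into $f$; hence $\eta(X)d_{Y}\sigma$ depends on $d_{Y}\sigma$ only modulo $f$, and
\[
q(X,Y)=\operatorname{tr}_{f}\big(\eta(X)\circ\beta(Y)\big),
\]
where $\beta(Y)\colon f\to f^{\perp}/f$ is the solder form and $\eta(X)$ is now viewed as a map $f^{\perp}/f\to f$.

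The heart of the argument is to show $q(T_{1},T_{2})=0$: for $X_{1}\in\Gamma T_{1}$ and $X_{2}\in\Gamma T_{2}$ I claim that $\eta(X_{1})\circ\beta(X_{2})$ vanishes on $f$. On the one hand, $\beta(X_{2})$ kills $s_{2}$ (since $s_{2}$ is a curvature sphere with curvature subbundle $T_{2}$, cf.\ Definition~\ref{def:curvsph}), while $\beta(X_{2})s_{1}\subseteq f_{2}/f$ because $f_{2}=f\oplus d\sigma_{1}(T_{2})$ for $\sigma_{1}\in\Gamma s_{1}$; thus $\beta(X_{2})f\subseteq f_{2}/f$. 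On the other hand, $\eta(X_{1})\in\Gamma(f\wedge f_{1})$ by Proposition~\ref{prop:mceqn}, and because $f\perp f_{2}$ (as $f_{2}\le f^{\perp}$) and $f_{1}\perp f_{2}$ (from the orthogonal splitting $f^{\perp}/f=f_{1}/f\oplus_{\perp}f_{2}/f$), every element of $f\wedge f_{1}$ annihilates $f_{2}$, so $\eta(X_{1})$ kills $f_{2}/f$. Composing the two facts gives the claim, and the same argument with the two curvature subbundles interchanged gives $q(T_{2},T_{1})=0$.

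Finally, since $T\Sigma=T_{1}\oplus T_{2}$ and $q$ is tensorial, bilinearity together with $q(T_{1},T_{2})=q(T_{2},T_{1})=0$ shows that $q(X,Y)$ is determined by $q(X_{1},X_{1})$ and $q(X_{2},X_{2})$ alone, hence is symmetric; and the vanishing of the mixed term means precisely that $q$ is trace-free with respect to $c$, whose null directions are $T_{1}$ and $T_{2}$. In other words $q\in\Gamma\big((T_{1}^{*})^{2}\oplus(T_{2}^{*})^{2}\big)$, which is exactly the statement that $q$ is a quadratic differential with respect to $c$.

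I expect the main obstacle to be the linear-algebra bookkeeping in the middle paragraph: justifying that $\eta(X)$ descends to a map $f^{\perp}/f\to f$, that $\beta(X_{2})f$ lands in the rank-one bundle $f_{2}/f$, and that $f\wedge f_{1}$ annihilates $f_{2}$, each of which rests on an orthogonality relation among $f$, $f_{1}$ and $f_{2}$ recorded earlier. None of this is deep, but the quotients and the skew action of $\wedge^{2}\mathbb{R}^{4,2}$ on $\mathbb{R}^{4,2}$ must be tracked with care.
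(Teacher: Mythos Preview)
Your proof is correct and follows exactly the route the paper intends: the corollary is stated without proof, as an immediate consequence of Proposition~\ref{prop:mceqn}, and your argument makes precisely this deduction explicit by using $\eta(T_{i})\le f\wedge f_{i}$ together with the orthogonality $f_{1}\perp f_{2}$ to show that the endomorphism $\sigma\mapsto\eta(X_{1})d_{X_{2}}\sigma$ vanishes identically on $f$ (so in particular has zero trace). Your organisation via the solder form $\beta$ and the induced map $\eta(X)\colon f^{\perp}/f\to f$ is a clean way to package the bookkeeping, and your symmetry argument (using that $T_{1}$ and $T_{2}$ have rank one) is the right one.
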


In order to work with the gauge orbit of closed 1-forms that arises from Lie applicability, we shall  derive a unique member of this orbit using the Lie cyclide splitting
\[ \underline{\mathbb{R}}^{4,2} = S_{1}\oplus S_{2}.\]
This then induces a splitting 
\[ \wedge^{2}\underline{\mathbb{R}}^{4,2}=\mathfrak{h}\oplus \mathfrak{m},\]
where 
\[ \mathfrak{h} := (S_{1}\wedge S_{1}) \oplus (S_{2}\wedge S_{2}) \quad \text{and}\quad \mathfrak{m}:=S_{1}\wedge S_{2}.\]
Thus, given a closed 1-form $\eta\in \Omega^{1}(f\wedge f^{\perp})$, we may write $\eta = \eta_{\mathfrak{h}}+\eta_{\mathfrak{m}}$,
where $\eta_{\mathfrak{h}} \in \Omega^{1}(\mathfrak{h}\cap (f\wedge f^{\perp}))$ and  $\eta_{\mathfrak{m}} \in \Omega^{1}(\mathfrak{m}\cap (f\wedge f^{\perp}))$. 

\begin{proposition}
$\eta_{\mathfrak{h}}$ is well-defined on gauge orbits.
\end{proposition}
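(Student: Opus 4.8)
The plan is to show that if $\tilde\eta = \eta - d\tau$ for some $\tau \in \Gamma(\wedge^2 f)$, then $\tilde\eta_{\mathfrak{h}} = \eta_{\mathfrak{h}}$; equivalently, that the $\mathfrak{h}$-component of $d\tau$ vanishes for every $\tau \in \Gamma(\wedge^2 f)$. Since $\wedge^2 f$ has rank $1$, it suffices to check this on a single nowhere-zero local section $\tau$; and since $T\Sigma = T_1 \oplus T_2$, it suffices to evaluate $d_{X_i}\tau$ for $X_i \in \Gamma T_i$, $i = 1,2$. So the whole statement reduces to the claim that $d_{X_i}\tau \in \Gamma(\mathfrak{m} \cap (f\wedge f^\perp))$, i.e. that $d_{X_i}\tau$ has no component in $(S_1 \wedge S_1) \oplus (S_2 \wedge S_2)$.

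To see this I would write $\tau = \sigma_1 \wedge \sigma_2$ with $\sigma_i \in \Gamma s_i$ (possible away from umbilics, since $f = s_1 \oplus s_2$), so that $d_{X}\tau = (d_X\sigma_1)\wedge \sigma_2 + \sigma_1 \wedge (d_X \sigma_2)$ for any $X$. Now take $X = X_1 \in \Gamma T_1$. Because $s_1$ is a curvature sphere with curvature subbundle $T_1$, we have $d_{X_1}\sigma_1 \in \Gamma f$, and more precisely $d_{X_1}\sigma_1 \in \Gamma S_1$ (indeed $d_{X_1}\sigma_1 \in \langle \sigma_1, \sigma_2\rangle \subset S_1 \cap S_2$-terms — let me be careful: $d_{X_1}\sigma_1 \in \Gamma f \subset \Gamma(S_1 \oplus S_2)$, but in fact from the definition $S_1 = \langle\sigma_1, d_Y\sigma_1, d_Yd_Y\sigma_1\rangle$ and $S_2 = \langle\sigma_2, d_X\sigma_2, d_Xd_X\sigma_2\rangle$ one has $\sigma_1, d_{X_1}\sigma_1 \in \Gamma S_1$ and $\sigma_2 \in \Gamma S_2$). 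Hence $(d_{X_1}\sigma_1)\wedge\sigma_2 \in \Gamma(S_1 \wedge S_2) = \Gamma\mathfrak{m}$. For the second term, decompose $d_{X_1}\sigma_2 = \mathcal{D}_{X_1}\sigma_2 + \mathcal{N}_{X_1}\sigma_2$; here $\mathcal{D}_{X_1}\sigma_2 \in \Gamma S_2$ gives $\sigma_1 \wedge \mathcal{D}_{X_1}\sigma_2 \in \Gamma(S_1 \wedge S_2) = \Gamma\mathfrak{m}$, while $\mathcal{N}_{X_1}\sigma_2 = 0$ by Lemma~\ref{lem:N} (which states $\mathcal{N}(T_1)s_2 = 0$). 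Thus $d_{X_1}\tau \in \Gamma\mathfrak{m}$, and it lies in $f\wedge f^\perp$ since $d\tau$ does. By the symmetric argument with the roles of $1$ and $2$ interchanged — using $d_{X_2}\sigma_2 \in \Gamma S_2$, $\mathcal{D}_{X_2}\sigma_1 \in \Gamma S_1$, and $\mathcal{N}(T_2)s_1 = 0$ — we get $d_{X_2}\tau \in \Gamma\mathfrak{m}$ as well. Hence $d\tau \in \Omega^1(\mathfrak{m} \cap (f\wedge f^\perp))$, so $(d\tau)_{\mathfrak{h}} = 0$ and $\tilde\eta_{\mathfrak{h}} = \eta_{\mathfrak{h}}$.

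The main point to get right — and the only place there is any real content — is the identification of which $S_i$ the various derivatives land in, i.e. pinning down $d_{X_1}\sigma_1 \in \Gamma S_1$ and the $\mathcal{D}$-part of the cross terms; once one invokes Lemma~\ref{lem:N} for the $\mathcal{N}$-part, the wedge bookkeeping is immediate. I would also remark that the argument is purely local and the conclusion is independent of the choice of $\sigma_1, \sigma_2$ by tensoriality in $\wedge^2 f$, and that at umbilic points the statement is vacuous or follows by continuity (the $\mathfrak{h}$-component is determined by a continuous expression in $\eta$). No difficulty beyond this is anticipated.
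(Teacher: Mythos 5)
Your proof is correct and is essentially the paper's own (one-line) argument: the whole content is that $d\tau\in\Omega^{1}(\mathfrak{m}\cap(f\wedge f^{\perp}))$ for every $\tau\in\Gamma(\wedge^{2}f)$, and you simply supply the verification via Lemma~\ref{lem:N}. One small inaccuracy: $d_{X_{1}}\sigma_{1}$ does \emph{not} lie in $\Gamma S_{1}$ in general --- it lies in $\Gamma f=\Gamma(s_{1}\oplus s_{2})$ and typically has a nonzero $s_{2}$-component (for the special lifts one has $d_{1}\sigma_{1}=\alpha\sigma_{1}+\beta\sigma_{2}$ with $\beta\neq 0$) --- but this is harmless, since that component wedges with $\sigma_{2}$ to zero and so $(d_{X_{1}}\sigma_{1})\wedge\sigma_{2}\in\Gamma(s_{1}\wedge s_{2})\le\Gamma\mathfrak{m}$ all the same.
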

\begin{proof}
This follows from the fact that $d\tau \in \Omega^{1}(\mathfrak{m}\,\cap (f\wedge f^{\perp}))$, for any $\tau\in \Gamma (\wedge^{2}f)$. 
\end{proof}

\begin{proposition}
\label{prop:midcon}
Modulo $\Omega^{1}(\wedge^{2} f)$, $\eta_{\mathfrak{m}} = d\tau$ for some $\tau\in \Gamma(\wedge^{2} f)$. 
\end{proposition}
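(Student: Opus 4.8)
The plan is to exploit the fact, established in Proposition~\ref{prop:mceqn}, that $\eta(T_i) \le f \wedge f_i$, and to analyze each partial piece $\eta(T_i)$ separately. Write $\eta_i := \eta(T_i) \in \Omega^1(T_i^*)$ viewed with values in $f \wedge f_i$. The $\mathfrak{m}$-component of $\eta$ lives in $\mathfrak{m} = S_1 \wedge S_2$, and the key structural observation should be that $(f \wedge f_i) \cap \mathfrak{m}$ is exactly one-dimensional (as a bundle over $\Sigma$), so that $\eta_{i,\mathfrak{m}}$ is, up to scaling by a 1-form, a single canonically determined section. First I would unwind the containments: $f = s_1 \oplus s_2$, $f_i = f \oplus d\sigma(T_i)$ for a suitable $\sigma \in \Gamma f$, and $S_i = \langle \sigma_i, d_j\sigma_i, d_j d_j \sigma_i\rangle$ (where $j \neq i$). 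Pairing these against the splitting $\underline{\mathbb{R}}^{4,2} = S_1 \oplus_\perp S_2$ should show that the part of $f \wedge f_i$ landing in $S_1 \wedge S_2$ is spanned by $\sigma_i \wedge \sigma_j$ together with possibly $\sigma_j \wedge d_i\sigma_j$ — and one checks $\sigma_i \wedge \sigma_j \in \mathfrak{m}$ while locating where the derived term sits.

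Next I would produce the candidate $\tau$. The natural guess is $\tau = \sigma_1 \wedge \sigma_2$ for appropriately normalized lifts $\sigma_i \in \Gamma s_i$, or more precisely a section of $\wedge^2 f$ whose derivative's $\mathfrak{m}$-component matches $\eta_{\mathfrak{m}}$. Since $\wedge^2 f$ has rank one, $\Gamma(\wedge^2 f)$ is the set of $\lambda\, \sigma_1 \wedge \sigma_2$ for smooth $\lambda$ (once $\sigma_1, \sigma_2$ are fixed), and $d(\lambda \sigma_1 \wedge \sigma_2) = d\lambda \otimes \sigma_1 \wedge \sigma_2 + \lambda\, d(\sigma_1 \wedge \sigma_2)$. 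The term $\sigma_1 \wedge d\sigma_i$ contributes to $\mathfrak{m}$ along the "new direction" in $f_i$, and $d\lambda \otimes \sigma_1\wedge\sigma_2$ contributes a multiple of $\sigma_1 \wedge \sigma_2$. So the statement reduces to: the $\mathfrak{m}$-component $\eta_{\mathfrak{m}}$, after subtracting a suitable $\Omega^1(\wedge^2 f)$ term, can be written as $d$ of an element of $\wedge^2 f$ — equivalently, modulo $\Omega^1(\wedge^2 f)$, $\eta_{\mathfrak{m}}$ lies in the image of the (restricted) exterior derivative on $\Gamma(\wedge^2 f)$. I would verify this by expanding $\eta_i$ in the basis adapted to $s_i$ and $d_i\sigma_j$, using closedness of $\eta$ (Proposition~\ref{prop:mceqn}) and Lemma~\ref{lem:N} to control which components can appear.

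The main obstacle I anticipate is the bookkeeping of the $\mathfrak{m} \cap (f \wedge f^\perp)$ bundle: showing precisely which rank it has and that its "off-$(\sigma_1 \wedge \sigma_2)$" part is exactly captured by differentiating $\wedge^2 f$, rather than merely contained in something larger. Concretely, one must check that the component of $\eta_i$ along $\sigma_j \wedge (\text{the extra direction of } f_i)$ — the part of $\eta_{\mathfrak{m}}$ not proportional to $\sigma_1 \wedge \sigma_2$ — is forced by the closedness condition to coincide with the corresponding component of $d(\sigma_1 \wedge \sigma_2)$ (up to rescaling lifts). This is essentially a first-order compatibility computation: differentiate the relation $\eta(X_i) d_{X_j}\sigma_i = 0$ from the proof of Proposition~\ref{prop:mceqn}, or equivalently use that $d\eta = 0$ evaluated on mixed arguments, to pin down the cross terms. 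Once that identity is in hand, choosing $\lambda$ to absorb the remaining $d\lambda \otimes \sigma_1 \wedge \sigma_2$ discrepancy is immediate, and the claim follows. A brief remark on the role of umbilics may be warranted, since the argument presupposes $f = s_1 \oplus s_2$ with the associated $f_i$ well-defined.
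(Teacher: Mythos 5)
Your plan matches the paper's proof: expand $\eta$ modulo $\Omega^{1}(\wedge^{2}f)$ in the adapted frame as $\alpha_{1}\,\sigma_{1}\wedge d\sigma_{1}+\alpha_{2}\,\sigma_{2}\wedge d\sigma_{2}+\beta_{1}\,\sigma_{1}\wedge d\sigma_{2}+\beta_{2}\,\sigma_{2}\wedge d\sigma_{1}$, identify $\eta_{\mathfrak{m}}$ with the two cross terms, and use $d\eta=0$ modulo $\Omega^{2}(f\wedge f^{\perp})$ (where the diagonal terms $d\sigma_{i}\curlywedge d\sigma_{i}$ already live) to force $\beta_{2}=-\beta_{1}$, whence $\eta_{\mathfrak{m}}\equiv d(\beta_{1}\,\sigma_{1}\wedge\sigma_{2})$. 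The verification you defer is exactly this last step and it does go through; of the two routes you suggest for it, the effective one is evaluating $d\eta=0$ on mixed arguments, not differentiating the identity $\eta(X_{i})d_{X_{j}}\sigma_{i}=0$.
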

\begin{proof}
Let $\sigma_{1}\in\Gamma s_{1}$ and $\sigma_{2}\in\Gamma s_{2}$ be lifts of the curvature spheres. Then we may write
\[ \eta = (\alpha_{1}\, \sigma_{1}\wedge d\sigma_{1} + \alpha_{2}\, \sigma_{2}\wedge d\sigma_{2} + 
\beta_{1}\, \sigma_{1}\wedge d\sigma_{2} + \beta_{2}\,\sigma_{2}\wedge d\sigma_{1}) \, \bmod\, \Omega^{1}(\wedge^{2} f),\]
where $\alpha_{1},\alpha_{2},\beta_{1},\beta_{2}$ are smooth functions. In this case 
\[ \eta_{\mathfrak{m}} = (\beta_{1}\,\sigma_{1}\wedge d\sigma_{2} + \beta_{2}\,\sigma_{2}\wedge d\sigma_{1}) \, \bmod\, \Omega^{1}(\wedge^{2}f).\]
Now $d\sigma_{1}\curlywedge d\sigma_{1}, d\sigma_{2}\curlywedge d\sigma_{2} \in \Omega^{2}(f\wedge f^{\perp})$ and thus 
\[ 0 = d\eta  = \beta_{1}\, d\sigma_{1}\curlywedge d\sigma_{2} + \beta_{2}\, d\sigma_{2}\curlywedge d\sigma_{1}\, \bmod\,  \Omega^{2}(f\wedge f^{\perp}).\]
Therefore $\beta_{2} = -\beta_{1}$ and
\[ \eta_{\mathfrak{m}} = d(\beta_{1} \sigma_{1}\wedge\sigma_{2}) \, \bmod \, \Omega^{1}(\wedge^{2} f).\]
Hence the result is proved.
\end{proof} 

From Proposition~\ref{prop:midcon} one can deduce that there exists a unique gauge potential of $[\eta]$ with $\eta_{\mathfrak{m}}\in\Omega^{1}(\wedge^{2} f)$, thus motivating the following definition\footnote{This potential also has a characterisation in terms of Lie algebra homology, analogous to the characterisation given in~\cite[\S2.4.1]{C2012i} for projectively applicable surfaces.}:

\begin{definition}
We call the unique gauge potential in $[\eta]$ with $\eta_{\mathfrak{m}} \in\Omega^{1}(\wedge^{2} f)$ the middle potential and denote it $\eta^{mid}$.
\end{definition}

Since $q$ is well-defined on gauge orbits, we may compute it using the middle potential. Then it is clear  that $q(X,Y) = \text{tr}(\sigma\mapsto \eta_{\mathfrak{h}}(X)d_{Y}\sigma)$, since $(\wedge^{2}f)f^{\perp}=0$.

\begin{remark}
It should be noted that it is possible for a Legendre immersion to be Lie applicable in more than one way, i.e., for there to exists more than one gauge orbit of non-trivial closed 1-forms with values in $f\wedge f^{\perp}$. The case that a Legendre immersion is Lie applicable in three parameters worth of ways has been studied in~\cite{F2002, MN2006}.
\end{remark}

\subsection{Invariant approach}
\label{subsec:inv}
We will now obtain a characterisation of Lie applicability by the existence of a certain quadratic differential. So let us assume that $q$ is a quadratic differential with respect to the conformal structure $c$, i.e., $q\in \Gamma( (T_{1}^{*})^{2}\oplus (T_{2}^{*})^{2})$. For the rest of this section we make the assumption that the signature of $q$ is constant\footnote{In order to establish a global theory of Lie applicable surfaces we will have to weaken this assumption.} over $\Sigma$. Thus, up to rescaling $q$ by $\pm 1$ and reordering $T_{1}$ and $T_{2}$, we may assume that 
\[ q= - \epsilon^{2} q_{1} + q_{2},\]
where $\epsilon\in \{0,1,i\}$, and $q_{1}\in \Gamma (T_{1}^{*})^{2}$ and $q_{2}\in \Gamma (T_{2}^{*})^{2}$ are positive definite quadratic forms. Then $q_{1}$ and $q_{2}$ determine unique lifts $\sigma_{1}\in\Gamma s_{1}$ and $\sigma_{2}\in\Gamma s_{2}$ (up to sign) such that
\[
q_{1} = (d\sigma_{2},d\sigma_{2}) \quad \text{and} \quad q_{2} = (d\sigma_{1},d\sigma_{1}). 
\]
Thus, $q$ determines a unique 1-form $\eta_{\mathfrak{h}}\in \Omega^{1}(\mathfrak{h}\cap (f\wedge f^{\perp}))$ such that 
\[q(X,Y) = \text{tr}(\sigma\mapsto \eta_{\mathfrak{h}}(X)d_{Y}\sigma),\]
namely,
\[ \eta_{\mathfrak{h}} =- \sigma_{1}\wedge d_{2}\sigma_{1} + \epsilon^{2} \sigma_{2}\wedge d_{1}\sigma_{2},\]
where we recall that each $d_{i}$ denotes the partial connection of $d$ along $T_{i}$. Let $\omega := \omega_{1} + \omega_{2}$ be a 1-form, with $\omega_{1}\in\Gamma T_{1}^{*}$ and $\omega_{2}\in\Gamma T_{2}^{*}$, and define 
\[ \eta^{mid} := - \sigma_{1}\wedge d\sigma_{1} + \epsilon^{2} \sigma_{2}\wedge d\sigma_{2} + \omega \sigma_{1}\wedge \sigma_{2}.\]
Then $\eta^{mid}$ is closed if and only if
\begin{eqnarray}
\label{eqn:midclosed}
0 =  - d\sigma_{1}\curlywedge d\sigma_{1} + \epsilon^{2} d\sigma_{2}\curlywedge d\sigma_{2} + d\omega\, \sigma_{1}\wedge \sigma_{2} - \omega \wedge d(\sigma_{1}\wedge \sigma_{2}).
\end{eqnarray}
Now let $\alpha,\beta\in \Gamma T_{1}^{*}$ and $\gamma ,\delta\in \Gamma T_{2}^{*}$ such that 
\[ d_{1}\sigma_{1} = \alpha \sigma_{1} + \beta \sigma_{2} \quad \text{and}\quad 
d_{2}\sigma_{2} = \gamma \sigma_{1} + \delta \sigma_{2}.\]

\begin{remark}
\label{rem:degcase}
Obviously, in the case that $\epsilon =0$, $q_{1}$ and thus our lift $\sigma_{2}$ of $s_{2}$ may be chosen arbitrarily. To simplify the following analysis, we will fix $q_{1}$ by choosing a lift $\sigma_{2}$ so that $\delta=0$. Note that this choice is unique up to multiplication by a smooth function $g$ such that $d_{2}g=0$. 
\end{remark}

Therefore, (\ref{eqn:midclosed}) is equivalent to 
\begin{align*}
0= &- 2\alpha\wedge (\sigma_{1}\wedge d_{2}\sigma_{1}) + 2\epsilon^{2}\delta\wedge (\sigma_{2}\wedge d_{1}\sigma_{2}) + (2\epsilon^{2}\gamma - \omega_{2})\wedge (\sigma_{1}\wedge d_{1}\sigma_{2}) \\
& + (-2\beta + \omega_{1})\wedge (\sigma_{2}\wedge d_{2}\sigma_{1}) + 
(d\omega - \omega_{1}\wedge \delta - \omega_{2}\wedge \alpha) \sigma_{1}\wedge \sigma_{2}.
\end{align*} 
Hence, $\eta^{mid}$ is closed if and only if the following two conditions hold:
\begin{enumerate}[(a)]
\item $\alpha=\delta=0$, that is, $d_{1}\sigma_{1}\in \Gamma T_{1}^{*}\otimes s_{2}$ and $d_{2}\sigma_{2}\in \Gamma T_{2}^{*}\otimes s_{1}$.
\item\label{item:invdem} $\omega = 2(\beta + \epsilon^{2} \gamma)$ and $\omega$ is closed.
\end{enumerate}
These two conditions can be reformulated as conditions on $q$. In Lemma~\ref{app:quad} we show that the first condition is equivalent to $q$ being divergence-free with respect to the conformal structure $c$ on $T\Sigma$. In other words, in terms of conformal curvature line coordinates $(u,v)$, there exist functions $U$ of $u$ and $V$ of $v$ such that 
\[ q = -\epsilon^{2} U^{2}du^{2} + V^{2}dv^{2}.\]
The second condition can be equated to a condition on the Darboux cubic form.
Recall that we defined the Darboux cubic form $\mathcal{C}\in \Gamma S^{3}T^{*}\Sigma \otimes (\wedge^{2}f)^{*}$ as
\[ \mathcal{C}(X,Y,Z)\sigma\wedge\nu = (\mathcal{D}_{X}\mathcal{D}_{Y}\sigma,\mathcal{N}_{Z}\nu) - 
(\mathcal{D}_{X}\mathcal{D}_{Y}\nu,\mathcal{N}_{Z}\sigma),\]
where $\sigma,\nu\in\Gamma f$ and $X,Y,Z\in\Gamma T\Sigma$. Then in terms of the special lifts $\sigma_{1}$ and $\sigma_{2}$,
\begin{align} 
\mathcal{C}(X,Y,Z)\sigma_{1}\wedge \sigma_{2} &= -\gamma(Z)(d_{Y}\sigma_{1},d_{X}\sigma_{1})
+ \beta(Z)(d_{Y}\sigma_{2},d_{X}\sigma_{2})\label{eqn:dcubq}\\
&= -\gamma(Z)q_{2}(X,Y) + \beta(Z) q_{1}(X,Y). \nonumber
\end{align}
Now let $X\in\Gamma T_{1}$, $Y\in\Gamma T_{2}$ such that 
\[ q_{1}(X,X) = q_{2}(Y,Y) = 1.\]
Then we may define a 1-form
\[ \mathcal{C}^{q} := (\mathcal{C}(X,X,.) -\epsilon^{2}\mathcal{C}(Y,Y,.))\sigma_{1}\wedge \sigma_{2}.\]
It is then clear from Equation~(\ref{eqn:dcubq}) that $\mathcal{C}^{q}  = \beta + \epsilon^{2}\gamma$ and therefore condition~(\ref{item:invdem}) is equivalent to the closure\footnote{
In the case that $\epsilon =0$, $X$ and thus $\mathcal{C}^{q}$ are determined by our choice of lift of $s_{2}$ in Remark~\ref{rem:degcase}. A different choice of such a lift scales $\mathcal{C}^{q}$ by a function $g$ satisfying $d_{2}g =0$. Therefore, the closure of $\mathcal{C}^{q}$ is not affected by this choice.} of $\mathcal{C}^{q}$. We have thus arrived at the following theorem:

\begin{theorem}
\label{thm:invariant}
An umbilic-free Legendre map $f$ is an $\Omega$-surface ($\Omega_{0}$-surface) if and only if there exists a non-zero divergence-free, non-degenerate (degenerate) quadratic differential (with respect to the conformal structure $c$ induced by $f$) $q$ such that $\mathcal{C}^{q}$ is closed.
\end{theorem}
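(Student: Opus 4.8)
The plan is to assemble the theorem from the chain of equivalences already established in this section, so the proof is mostly a matter of bookkeeping rather than new computation. First I would recall that, by Proposition~\ref{prop:midcon} and the definition of the middle potential, a Lie applicable structure on an umbilic-free $f$ is the same data as a closed 1-form $\eta^{mid}$ of the prescribed shape, and that its associated quadratic differential $q = \text{tr}(\sigma\mapsto\eta_{\mathfrak{h}}(\cdot)d_{\cdot}\sigma)$ is, by Corollary~\ref{cor:qsym}, automatically a quadratic differential with respect to $c$. So the ``only if'' direction is immediate: given an $\Omega$- or $\Omega_{0}$-surface, take $q$ to be the quadratic differential of its middle potential; it is non-zero by Definition~\ref{def:lieapp} and non-degenerate (resp. degenerate) on a dense open set by definition; the two closure conditions (a) and (b) derived above then say precisely that $q$ is divergence-free and $\mathcal{C}^{q}$ is closed.

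For the converse I would run the construction of Subsection~\ref{subsec:inv} in reverse. Starting from a non-zero divergence-free quadratic differential $q$ with $\mathcal{C}^{q}$ closed, write $q = -\epsilon^{2}q_{1} + q_{2}$ after rescaling by $\pm 1$ and possibly swapping $T_{1},T_{2}$, with $\epsilon\in\{0,1,i\}$ and $q_{i}$ positive definite on $T_{i}$; in the non-degenerate case $\epsilon\in\{1,i\}$, in the degenerate case $\epsilon=0$. As explained in the text, $q_{1}$ and $q_{2}$ pin down lifts $\sigma_{1}\in\Gamma s_{1}$, $\sigma_{2}\in\Gamma s_{2}$ (in the $\epsilon=0$ case fixing the residual freedom as in Remark~\ref{rem:degcase}), and hence a canonical $\eta_{\mathfrak{h}}$. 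Divergence-freeness of $q$ gives condition (a) via Lemma~\ref{app:quad}, and closure of $\mathcal{C}^{q}$ gives $\omega := 2(\beta + \epsilon^{2}\gamma) = 2\mathcal{C}^{q}$ closed, which is condition (b); defining $\eta^{mid} := -\sigma_{1}\wedge d\sigma_{1} + \epsilon^{2}\sigma_{2}\wedge d\sigma_{2} + \omega\,\sigma_{1}\wedge\sigma_{2}$ then produces, by the computation leading to~(\ref{eqn:midclosed}), a closed 1-form in $\Omega^{1}(f\wedge f^{\perp})$. By Proposition~\ref{prop:mceqn} it automatically satisfies $[\eta^{mid}\wedge\eta^{mid}]=0$, so $f$ is Lie applicable in the sense of Definition~\ref{def:lieapp} provided its quadratic differential is non-zero; but that quadratic differential is $q$ by construction, which we assumed non-zero, and it is non-degenerate (resp. degenerate) on a dense open set exactly when $\epsilon\neq 0$ (resp. $\epsilon=0$), i.e. precisely the $\Omega$- (resp. $\Omega_{0}$-) case.

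The one point needing a little care — and the place I would expect the only genuine friction — is verifying that the ``divergence-free'' hypothesis in the statement matches condition (a), i.e. the content of Lemma~\ref{app:quad}: one must check that $\alpha = \delta = 0$ for the $q$-adapted lifts is equivalent to $q$ having the coordinate form $-\epsilon^{2}U(u)^{2}du^{2} + V(v)^{2}dv^{2}$ in conformal curvature line coordinates. Since that lemma is stated in the paper I may invoke it directly. A second minor subtlety is consistency of the normalisations in the degenerate case: one should note, as the footnotes do, that the residual gauge freedom $g$ with $d_{2}g = 0$ rescales both $\sigma_{2}$ and $\mathcal{C}^{q}$ but affects neither the closure of $\mathcal{C}^{q}$ nor the resulting gauge orbit $[\eta^{mid}]$, so the construction is well-defined. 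With those two observations in hand the theorem follows by simply chaining the equivalences (a)$\Leftrightarrow$divergence-free and (b)$\Leftrightarrow$$\mathcal{C}^{q}$ closed together with Definition~\ref{def:lieapp} and Proposition~\ref{prop:mceqn}.
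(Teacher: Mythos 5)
Your proposal is correct and follows essentially the same route as the paper: the theorem is exactly the assembly of the equivalences of Subsection~\ref{subsec:inv} — the uniqueness of $\eta_{\mathfrak{h}}$ determined by $q$, the closedness criterion for $\eta^{mid}$ split into conditions (a) and (b), Lemma~\ref{app:quad} for (a)$\Leftrightarrow$divergence-free, the identity $\mathcal{C}^{q}=\beta+\epsilon^{2}\gamma$ for (b)$\Leftrightarrow$closure of $\mathcal{C}^{q}$, and Proposition~\ref{prop:mceqn} to dispense with $[\eta\wedge\eta]=0$. Your remarks on the degenerate-case normalisation and on matching the divergence-free condition are precisely the points the paper also flags in its footnotes and appendix.
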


\begin{remark}
In~\cite[\S2.4.1]{C2012i}, by using Lie algebra homology, an elegant characterisation of projectively applicable surfaces is given in terms of a quadratic differential and the Darboux cubic form. An analogous homological characterisation can be obtained for Lie applicable surfaces, however, this is beyond the scope of this paper. 
\end{remark}

Condition~(\ref{item:invdem}) also tells us that the middle potential is given by 
\begin{equation} \eta^{mid} = \sigma_{1}\wedge \star d\sigma_{1} + \epsilon^{2}\sigma_{2}\wedge \star d\sigma_{2}, \label{eqn:midform}
\end{equation}
where $\star$ is the hodge star operator induced by the conformal structure $c$.

\subsection{Demoulin's equation}
Now suppose that $f$ is the lift of an umbilic-free space-form projection $\mathfrak{f}$ with tangent plane congruence $\mathfrak{t}$, i.e., $f=\langle \mathfrak{f},\mathfrak{t}\rangle$. Then, from Subsection~\ref{subsec:symbreak},
\[ \mathfrak{t}+\kappa_{1}\mathfrak{f} \quad \text{and}\quad \mathfrak{t}+\kappa_{2}\mathfrak{f}\]
are lifts of the curvature sphere congruences $s_{1}$ and $s_{2}$, respectively. Thus, there exists functions $\lambda$ and $\mu$ such that our special lifts $\sigma_{1}$ and $\sigma_{2}$ are given by 
\[ \sigma_{1} = \lambda (\mathfrak{t}+\kappa_{1}\mathfrak{f}) \quad \text{and}\quad \sigma_{2} = \mu (\mathfrak{t}+\kappa_{2}\mathfrak{f}).\]
Since $q$ is divergence-free, in terms of arbitrary curvature line coordinates $(u,v)$, there exist  functions $U$ of $u$ and $V$ of $v$ such that 
\[ q = -\epsilon^{2}U^{2}du^{2} + V^{2}dv^{2}.\]
Thus, 
\[ V^{2} = (\sigma_{1,v},\sigma_{1,v}) = \lambda^{2} ( \mathfrak{t}_{v} + \kappa_{1}\mathfrak{f}_{v},    \mathfrak{t}_{v} + \kappa_{1}\mathfrak{f}_{v}) = \lambda^{2} (\kappa_{1} - \kappa_{2})^{2} G. \]
Hence, 
\[ \lambda = \pm \frac{V}{\sqrt{G}( \kappa_{1}-\kappa_{2})}.\]
Similarly, 
\[ \mu = \pm \frac{U}{\sqrt{E}(\kappa_{1}-\kappa_{2})}.\]
On the other hand, we have that 
\[ d_{1}\sigma_{1} = \beta \sigma_{2} \quad \text{and}\quad d_{2}\sigma_{2} = \gamma \sigma_{1}.\]
Therefore, 
\[ \beta \mu (\mathfrak{t}+\kappa_{2}\mathfrak{f}) = d_{1}\lambda (\mathfrak{t}+\kappa_{1}\mathfrak{f}) + \lambda d_{1}\kappa_{1} \mathfrak{f}\]
and 
\[ \beta = - \frac{\lambda}{\mu} \frac{d_{1}\kappa_{1}}{\kappa_{1}-\kappa_{2}}.\]
Similarly, 
\[ \gamma = \frac{\mu}{\lambda} \frac{d_{2}\kappa_{2}}{\kappa_{1}-\kappa_{2}}.\]
Thus, 
\[ C^{q} = \beta+ \epsilon^{2}\gamma = \pm \left(- \frac{V\sqrt{E}}{U\sqrt{G}} \frac{d_{1}\kappa_{1}}{\kappa_{1}-\kappa_{2}} +\epsilon^{2}\frac{U\sqrt{G}}{V\sqrt{E}} \frac{d_{2}\kappa_{2}}{\kappa_{1}-\kappa_{2}} \right).\]
Hence, $C^{q}$ is closed if and only if 
\[ 0 = \left(\frac{V\sqrt{E}}{U\sqrt{G}} \frac{\kappa_{1,u}}{\kappa_{1}-\kappa_{2}}\right)_{v} +\epsilon^{2}\left(\frac{U\sqrt{G}}{V\sqrt{E}} \frac{\kappa_{2,v}}{\kappa_{1}-\kappa_{2}} \right)_{u}.\]
Thus, $f$ is an $\Omega$-/$\Omega_{0}$-surface if and only if the space form projection $\mathfrak{f}$ is an $\Omega$-/$\Omega_{0}$-surface in the sense of Demoulin~\cite{D1911ii}. 

\subsection{Isothermic sphere congruences} 

We will now see how Lie applicable surfaces envelop isothermic sphere congruences. We say that a sphere congruence is isothermic if it is isothermic as a surface in the Lie quadric (with respect to the natural conformal structure on the Lie quadric). Equivalently, we have the following definition:

\begin{definition}[\cite{BDPP2011,H2003}]
A sphere congruence $s:\Sigma\to \mathbb{P}(\mathcal{L})$ is isothermic if there exists a non-zero closed 1-form $\eta_{s}\in \Omega^{1}(s\wedge s^{\perp})$. 
\end{definition}

Now suppose that $f$ is an umbilic-free Lie applicable surface with middle potential 
\[ \eta^{mid} = \sigma_{1}\wedge \star d\sigma_{1} + \epsilon^{2}\sigma_{2}\wedge \star d\sigma_{2}.\]
Then we may gauge $\eta^{mid}$ by $\pm \epsilon \sigma_{1}\wedge  \sigma_{2}$ to obtain\footnote{
Notice that $\eta^{mid} = \frac{1}{2}(\eta^{+}+ \eta^{-})$. This is our justification for calling $\eta^{mid}$ the middle potential.}
\[ \eta^{\pm} := \eta^{mid} + d(\pm\epsilon \sigma_{1}\wedge \sigma_{2}) = (\sigma_{1}\pm \epsilon\sigma_{2})\wedge \star d(\sigma_{1}\pm \epsilon\sigma_{2})  \in \Omega^{1}(s^{\pm}\wedge (s^{\pm})^{\perp}),\]
where $s^{\pm} := \langle \sigma_{1}\pm \epsilon\sigma_{2}\rangle$. Hence, $s^{\pm}$ are isothermic sphere congruences. In the case that $\epsilon \neq 0$ we have that $s^{\pm}$ are a pair of isothermic sphere congruences separating the curvature sphere congruences $s_{1}$ and $s_{2}$ harmonically. If $\epsilon=0$ we have that the curvature sphere congruence $s_{1}$ is isothermic. 

\begin{theorem}
If $f$ is an umbilic-free $\Omega$-surface then $f$ envelops a pair of isothermic sphere congruences that separate the curvature sphere congruences harmonically. Furthermore, if $q$ is indefinite then the isothermic sphere congruences are real and if $q$ is positive definite then they are complex conjugate. 

If $f$ is an umbilic-free $\Omega_{0}$ surface then $f$ envelops a curvature sphere congruence that is isothermic. 
\end{theorem}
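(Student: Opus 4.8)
The plan is to leverage the explicit form of the middle potential obtained in~(\ref{eqn:midform}) together with the gauge-freedom within the orbit $[\eta]$. First I would recall that, since $f$ is an umbilic-free $\Omega$-surface, Theorem~\ref{thm:invariant} (or equivalently the analysis leading to~(\ref{eqn:midform})) gives us a non-zero divergence-free non-degenerate quadratic differential $q$ and special lifts $\sigma_{1}\in\Gamma s_{1}$, $\sigma_{2}\in\Gamma s_{2}$ for which $\eta^{mid} = \sigma_{1}\wedge\star d\sigma_{1} + \epsilon^{2}\,\sigma_{2}\wedge\star d\sigma_{2}$ with $\epsilon\in\{1,i\}$ (non-degeneracy of $q$ rules out $\epsilon=0$). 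The key computational step is then to gauge $\eta^{mid}$ by the sections $\pm\epsilon\,\sigma_{1}\wedge\sigma_{2}\in\Gamma(\wedge^{2}f)$ and to verify, using $d(\pm\epsilon\,\sigma_{1}\wedge\sigma_{2}) = \pm\epsilon(d\sigma_{1}\wedge\sigma_{2} + \sigma_{1}\wedge d\sigma_{2})$ and the fact that $\star$ acts as $\mathrm{id}$ on $T_{1}^{*}$ and $-\mathrm{id}$ on $T_{2}^{*}$ together with conditions~(a) and~(\ref{item:invdem}), that
\[
\eta^{\pm} := \eta^{mid} + d(\pm\epsilon\,\sigma_{1}\wedge\sigma_{2}) = (\sigma_{1}\pm\epsilon\sigma_{2})\wedge\star\,d(\sigma_{1}\pm\epsilon\sigma_{2}).
\]
This is precisely the algebraic identity that packages the two terms of $\eta^{mid}$ plus the exact correction into a single ``$\sigma\wedge\star d\sigma$'' expression; I expect this repackaging—checking that the cross terms produced by the gauge exactly cancel the off-diagonal pieces—to be the main obstacle, though it is really just careful bookkeeping once the partial-connection relations $d_{1}\sigma_{1}\in\Gamma T_{1}^{*}\otimes s_{2}$ and $d_{2}\sigma_{2}\in\Gamma T_{2}^{*}\otimes s_{1}$ are in hand.

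Next I would observe that $\eta^{\pm}$ manifestly takes values in $s^{\pm}\wedge(s^{\pm})^{\perp}$, where $s^{\pm}:=\langle\sigma_{1}\pm\epsilon\sigma_{2}\rangle$: indeed $\sigma_{1}\pm\epsilon\sigma_{2}$ spans $s^{\pm}$ and $\star\,d(\sigma_{1}\pm\epsilon\sigma_{2})\in\Omega^{1}(f^{\perp})\subseteq\Omega^{1}((s^{\pm})^{\perp})$. Since $\eta^{\pm}$ is closed (it differs from the closed form $\eta^{mid}$ by an exact form) and non-zero (as $q\ne 0$ forces $\eta^{mid}$, and hence $\eta^{\pm}$, to be non-trivial on its gauge orbit by Lemma~\ref{lem:trivq}), the defining property of an isothermic sphere congruence is satisfied, so $s^{+}$ and $s^{-}$ are isothermic. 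To see that they separate $s_{1}$ and $s_{2}$ harmonically, I would note that $s_{1}=\langle\sigma_{1}\rangle$, $s_{2}=\langle\sigma_{2}\rangle$, $s^{+}=\langle\sigma_{1}+\epsilon\sigma_{2}\rangle$, $s^{-}=\langle\sigma_{1}-\epsilon\sigma_{2}\rangle$ form a harmonic quadruple in the projective line $\mathbb{P}(f)$ since their cross-ratio is $-1$ by the symmetric $\pm\epsilon$ structure.

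Finally, for the reality statement I would argue via the signature of $q$: when $q$ is indefinite we are in the case $\epsilon=1$, so $\sigma_{1}\pm\sigma_{2}$ are real sections and $s^{\pm}$ are genuine (real) sphere congruences; when $q$ is positive definite we have $\epsilon=i$, so $s^{\pm}=\langle\sigma_{1}\pm i\sigma_{2}\rangle$ are complex and are interchanged by complex conjugation, hence complex conjugate to one another. For the $\Omega_{0}$ case, degeneracy of $q$ forces $\epsilon=0$, so by~(\ref{eqn:midform}) the middle potential is simply $\eta^{mid}=\sigma_{1}\wedge\star d\sigma_{1}\in\Omega^{1}(s_{1}\wedge s_{1}^{\perp})$; this is closed and non-zero, so the curvature sphere congruence $s_{1}$ is itself isothermic, which completes the proof. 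The only subtlety worth flagging is ensuring that the gauging sections $\pm\epsilon\,\sigma_{1}\wedge\sigma_{2}$ are indeed globally defined sections of $\wedge^{2}f$ on the (dense open) umbilic-free locus where $q$ has constant signature, which is immediate from the constancy-of-signature hypothesis underlying the construction of $\sigma_{1},\sigma_{2}$.
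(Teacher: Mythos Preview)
Your proposal is correct and follows essentially the same approach as the paper: gauge the middle potential $\eta^{mid}$ of~(\ref{eqn:midform}) by $\pm\epsilon\,\sigma_{1}\wedge\sigma_{2}$ to obtain $\eta^{\pm}=(\sigma_{1}\pm\epsilon\sigma_{2})\wedge\star d(\sigma_{1}\pm\epsilon\sigma_{2})\in\Omega^{1}(s^{\pm}\wedge(s^{\pm})^{\perp})$, deduce that $s^{\pm}$ are isothermic, and read off the harmonic separation and the $\epsilon=0$ case. You supply more detail than the paper (the cross-ratio justification of harmonic separation, the explicit reality/signature dichotomy, and the non-triviality check), but the argument is the same.
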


\begin{lemma}
\label{lem:pointiso}
Let $s\le f$ be a sphere congruence enveloped by $f$ and suppose that there exists $\eta \in [\eta^{mid}]$ such that at a point $p\in\Sigma$ 
\[ \eta_{p}\in T_{p}^{*}\Sigma\otimes (s(p)\wedge f(p)^{\perp}).\]
Then $s$ coincides with one of the isothermic sphere congruences enveloped by $f$ at $p$. 
\end{lemma}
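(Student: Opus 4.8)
The plan is to work with the middle potential $\eta^{mid}$ directly and track how the condition at $p$ propagates through the gauge-orbit structure. First I would recall that, by~(\ref{eqn:midform}), $\eta^{mid} = \sigma_{1}\wedge \star d\sigma_{1} + \epsilon^{2}\sigma_{2}\wedge\star d\sigma_{2}$ and that a general element of $[\eta^{mid}]$ differs from $\eta^{mid}$ by $d\tau$ for $\tau\in\Gamma(\wedge^{2}f)$; writing $\tau = t\,\sigma_{1}\wedge\sigma_{2}$ for a smooth function $t$, we have $d\tau = dt\,\sigma_{1}\wedge\sigma_{2} + t\,(d\sigma_{1}\wedge\sigma_{2} + \sigma_{1}\wedge d\sigma_{2})$. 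The hypothesis is that for one such $\tau$, the bivector-valued $1$-form $\eta = \eta^{mid} - d\tau$ satisfies $\eta_{p}\in T_{p}^{*}\Sigma\otimes(s(p)\wedge f(p)^{\perp})$, i.e.\ every value of $\eta$ at $p$, as a skew endomorphism, has image inside $s(p)$ (equivalently lies in $s(p)\wedge\mathbb{R}^{4,2}$ after using $\eta f = 0$ so it automatically lands in $s(p)\wedge f(p)^{\perp}$).

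Next I would decompose everything along the curvature spheres. Write $s(p) = \langle a\sigma_{1} + b\sigma_{2}\rangle$ at $p$ for constants $a,b$ (both nonzero is the generic interesting case; the cases $s=s_{1}$ or $s=s_{2}$ should be handled and, I expect, correspond to the $\epsilon = 0$ situation or a degeneration). Evaluating $\eta$ on $X\in T_{1}$ and on $Y\in T_{2}$ separately and reducing modulo $f$, I would extract the $\mathfrak{m}$-component (the $S_1\wedge S_2$ part): since $\eta^{mid}_{\mathfrak m}\in\Omega^{1}(\wedge^2 f)$ and the $\mathfrak{m}$-parts of $d\sigma_1\wedge\sigma_2$, $\sigma_1\wedge d\sigma_2$ involve $d_2\sigma_1$ and $d_1\sigma_2$ which by condition~(a) of Subsection~\ref{subsec:inv} lie in $f^\perp$ transverse to $f$, the condition that $\eta(X)$, $\eta(Y)$ have image in $s(p)$ forces relations among $a$, $b$, $t(p)$, and $\epsilon$. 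The $\mathfrak{h}$-component $\eta_{\mathfrak h}=\eta^{mid}_{\mathfrak h}$ is $-\sigma_1\wedge d_2\sigma_1 + \epsilon^2\sigma_2\wedge d_1\sigma_2$ in the conventions above (up to sign/$\star$), and requiring its image at $p$ to lie in $s(p)=\langle a\sigma_1+b\sigma_2\rangle$ pins down the ratio $a:b$ in terms of $\epsilon$ — and I expect exactly $a:b = 1:(\pm\epsilon)$, which is precisely $s^{\pm} = \langle\sigma_1\pm\epsilon\sigma_2\rangle$, the isothermic sphere congruences constructed just before the lemma.

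So the key steps, in order, are: (i) write a general competitor $\eta\in[\eta^{mid}]$ via $\tau = t\,\sigma_1\wedge\sigma_2$; (ii) split $\eta = \eta_{\mathfrak h} + \eta_{\mathfrak m}$ along the Lie cyclide splitting and use that $\eta_{\mathfrak h}$ is gauge-invariant so only the $\mathfrak m$-part depends on $t$; (iii) impose $\mathrm{image}(\eta_p)\subseteq s(p)$ on the $\mathfrak h$-part to solve for $s(p)$, getting $s(p) = s^{+}(p)$ or $s^{-}(p)$; (iv) check consistency with the $\mathfrak m$-part (this should just fix $t(p)=\mp\epsilon$, matching the gauging $\eta^{\pm}=\eta^{mid}+d(\pm\epsilon\sigma_1\wedge\sigma_2)$ from the previous discussion). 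The main obstacle I anticipate is bookkeeping the $\mathfrak{h}$ versus $\mathfrak{m}$ components of $d\sigma_1\wedge\sigma_2$ and $\sigma_1\wedge d\sigma_2$ cleanly — one must use Lemma~\ref{lem:N}, condition~(a), and the fact that $d_1\sigma_1\in T_1^*\otimes s_2$, $d_2\sigma_2\in T_2^*\otimes s_1$ to see which pieces live in $\wedge^2 f$, in $f\wedge(f^\perp/f)$, and in $S_1\wedge S_2$; and the degenerate case $\epsilon=0$, where $s^+=s^-=s_1$, needs a separate remark, as does the possibility that $q$ is positive definite so that $\sigma_1\pm\epsilon\sigma_2$ (with $\epsilon=i$) are complex — there the statement is read with complexified bundles, consistent with the theorem immediately preceding.
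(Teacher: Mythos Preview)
Your overall setup in steps (i) and (ii) is fine, and the algebra would eventually lead to the right answer, but step (iii) as stated does not work: the $\mathfrak h$-part of $\eta$ alone cannot pin down $s(p)$. Indeed, since $\eta_{\mathfrak h} = -\sigma_{1}\wedge d_{2}\sigma_{1} + \epsilon^{2}\sigma_{2}\wedge d_{1}\sigma_{2}$, the image of $\eta_{\mathfrak h}(Y)$ on $f^{\perp}$ lies in $s_{1}$ while that of $\eta_{\mathfrak h}(X)$ lies in $s_{2}$, so the image of $\eta_{\mathfrak h,p}$ on $f^{\perp}$ is all of $f(p)$, not any single line. The point is that the subspace $s(p)\wedge f^{\perp}(p)$ is \emph{not} the direct sum of its $\mathfrak h$- and $\mathfrak m$-projections: for $\sigma = a\sigma_{1}+b\sigma_{2}$ the $\mathfrak h$- and $\mathfrak m$-parts of $\sigma\wedge v$ are coupled through the same $v\in f^{\perp}$, and it is precisely this coupling that determines the ratio $a:b$. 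So the condition $\eta_{p}\in s(p)\wedge f^{\perp}(p)$ does not decompose into independent conditions on $\eta_{\mathfrak h}$ and $\eta_{\mathfrak m}$.

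The paper avoids the Lie-cyclide splitting here entirely. It reduces modulo $\Omega^{1}(\wedge^{2}f)$ and organises by the curvature-sphere splitting $f=s_{1}\oplus s_{2}$ of the \emph{first} wedge factor:
\[
\eta \equiv \sigma_{1}\wedge(\lambda\, d_{1}\sigma_{2}-d_{2}\sigma_{1}) + \sigma_{2}\wedge(\epsilon^{2}d_{1}\sigma_{2}-\lambda\, d_{2}\sigma_{1}) \quad \bmod\ \Omega^{1}(\wedge^{2}f),
\]
using $d_{1}\sigma_{1}\in\Gamma s_{2}$ and $d_{2}\sigma_{2}\in\Gamma s_{1}$. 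Writing $s(p)=\langle\sigma_{1}+\mu\sigma_{2}\rangle$ (after first ruling out $s=s_{2}$), the requirement that this lie in $s(p)\wedge(f^{\perp}/f)$ is exactly that the two $f^{\perp}/f$-valued $1$-forms be proportional with ratio $1:\mu$, i.e.\ $\mu(\lambda d_{1}\sigma_{2}-d_{2}\sigma_{1}) = \epsilon^{2}d_{1}\sigma_{2}-\lambda d_{2}\sigma_{1}$. Linear independence of $d_{1}\sigma_{2}$ and $d_{2}\sigma_{1}$ then gives $\mu=\lambda(p)$ and $\mu\lambda(p)=\epsilon^{2}$, hence $\mu=\pm\epsilon$. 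Your step (iv) --- that the gauge parameter is forced to $\lambda(p)=\mp\epsilon$ --- is correct, but it comes out simultaneously with the determination of $s(p)$, not as a subsequent consistency check.
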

\begin{proof}
Since $\eta\in[\eta^{mid}]$, there exists a smooth function $\lambda$ such that 
\[ \eta = \eta^{mid} + d(\lambda \sigma_{1}\wedge \sigma_{2}).\]
Now using that 
\[ \eta^{mid} = \sigma_{1}\wedge \star d\sigma_{1} + \epsilon^{2} \sigma_{2}\wedge \star d\sigma_{2}\]
we have that 
\[ \eta = \sigma_{1}\wedge(\lambda d_{1}\sigma_{2} - d_{2}\sigma_{1}) + \sigma_{2}\wedge (\epsilon^{2} d_{1}\sigma_{2} - \lambda d_{2}\sigma_{1}) \, \bmod\, \Omega^{1}(\wedge^{2}f).\]
Since $d_{1}\sigma_{2}$ and $d_{2}\sigma_{1}$ are linearly independent, $\eta$ nowhere takes values in $s_{2}\wedge f^{\perp}$, for all smooth functions $\lambda$. Therefore, let $\mu\in \mathbb{R}$ such that $\sigma(p)=\sigma_{1}(p)+\mu \sigma_{2}(p)$ is a lift of $s(p)$. Then 
\[ \eta_{p}\in T_{p}^{*}\Sigma\otimes (s(p)\wedge f(p)^{\perp})\]
if and only if 
\[ \mu (\lambda(p) d_{1}\sigma_{2} - d_{2}\sigma_{1}) = \epsilon^{2}d_{1}\sigma_{2} - \lambda(p)d_{2}\sigma_{1}.\]
Since $d_{1}\sigma_{2}$ and $d_{2}\sigma_{1}$ are linearly independent at $p$, this is equivalent to 
\[ \mu = \lambda(p) \quad \text{and}\quad \lambda(p)^{2} = \epsilon^{2}.\]
Thus, $\sigma(p)=\sigma_{1}(p) \pm\epsilon \sigma_{2}(p)\in s^{\pm}(p)$. 
\end{proof}

\subsubsection{The $\Delta_{q}$ operator}
\label{subsubsec:Dq}
Let $X\in\Gamma T_{1}$ and $Y\in\Gamma T_{2}$ such that 
\[ q_{1}(X,X)=1\quad \text{and}\quad q_{2}(Y,Y)=1.\]
Then we define an operator 
\[ \Delta_{q} := d_{X}d_{X} -\epsilon^{2} d_{Y}d_{Y}.\]
Using $\Delta_{q}$ we define a map $\zeta_{q}:f\otimes f\to \mathbb{R}$ by
\[ \zeta_{q}(\nu,\xi) = (\Delta_{q}\nu,\xi).\]
Then $\zeta_{q}$ is a symmetric tensor and identifies the isothermic sphere congruences:

\begin{proposition}
\label{prop:isozeta}
Let $s\le f$. Then $\zeta_{q}(s(p),s(p)) =0$ if and only if $s$ coincides with one of the isothermic sphere congruences at $p$. 
\end{proposition}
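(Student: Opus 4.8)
The plan is to compute $\zeta_q(\sigma, \sigma)$ for a general lift $\sigma = \sigma_1 + \mu \sigma_2$ of a sphere congruence $s \le f$ (with $\mu$ a real number at the point $p$) and show it vanishes exactly when $\mu^2 = \epsilon^2$, which by the discussion preceding Lemma~\ref{lem:pointiso} characterises the isothermic sphere congruences $s^{\pm} = \langle \sigma_1 \pm \epsilon \sigma_2\rangle$. First I would recall from condition~(a) in Subsection~\ref{subsec:inv} that the special lifts satisfy $d_1 \sigma_1 = \beta\sigma_2$ and $d_2\sigma_2 = \gamma\sigma_1$ (in the $\Omega_0$ case one also has $\delta = 0$), together with $d_2\sigma_1, d_1\sigma_2 \in \Gamma f^{\perp}$ being the "transverse" derivatives that enter the metric via $q_1 = (d\sigma_2, d\sigma_2)$ and $q_2 = (d\sigma_1, d\sigma_1)$. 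With $X \in \Gamma T_1$, $Y \in \Gamma T_2$ normalised so that $q_1(X,X) = q_2(Y,Y) = 1$, the key inner products are $(d_X\sigma_1, d_X\sigma_1) = q_2(X,X)$... wait, one must be careful: $q_2 \in \Gamma(T_1^*)^2$ so $(d_X\sigma_1, d_X\sigma_1) = q_2(X,X)$, and $(d_Y\sigma_2, d_Y\sigma_2) = q_1(Y,Y)$; the normalisation is on the other bundle, so one also needs the values $q_2(X,X)$ and $q_1(Y,Y)$, which are not $1$. This bookkeeping is exactly the kind of routine-but-delicate computation to watch.

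The main computation: expand $\Delta_q \sigma = d_X d_X(\sigma_1 + \mu\sigma_2) - \epsilon^2 d_Y d_Y(\sigma_1 + \mu\sigma_2)$ and pair with $\sigma = \sigma_1 + \mu\sigma_2$. Since $f = s_1 \oplus s_2$ is isotropic ($(\sigma_1,\sigma_1) = (\sigma_2,\sigma_2) = (\sigma_1,\sigma_2) = 0$), the only surviving terms in the pairing are the components of $\Delta_q\sigma$ lying in $f^{\perp}$ that meet $\sigma$ nontrivially — but $\sigma \in \Gamma f$, so actually $(\Delta_q\sigma, \sigma)$ only sees the $f$-component interactions through the second derivatives. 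More precisely I would use that $d_X d_X \sigma_1 = d_X(\beta\sigma_2)$ lies in $\Gamma f$ plus a term $\beta\, d_X\sigma_2 \in \Gamma f^{\perp}$; pairing with $\sigma_1, \sigma_2 \in \Gamma f$ kills the $f^{\perp}$ part by isotropy of $f^{\perp}$ against... no: $f \le f^{\perp}$, so $(f, f^{\perp}) $ need not vanish. Here is the cleaner route: differentiate the identities $(\sigma_i, \sigma_j) = 0$ twice. From $(d_X\sigma_1, \sigma_1) = 0$ one gets $(d_X d_X \sigma_1, \sigma_1) = -(d_X\sigma_1, d_X\sigma_1) = -q_2(X,X)$, and from $(d_X\sigma_1,\sigma_2) + (\sigma_1, d_X\sigma_2) = 0$, etc. Repeating this systematically for all the needed pairings $(d_Xd_X\sigma_i,\sigma_j)$ and $(d_Yd_Y\sigma_i,\sigma_j)$ reduces everything to $q_1, q_2$ evaluated on $X$ and $Y$ and to $\beta, \gamma$; the $\beta$ and $\gamma$ terms will have to cancel by symmetry of $\zeta_q$ or drop out, leaving $\zeta_q(\sigma,\sigma)$ proportional to $(\mu^2 - \epsilon^2)$ times a nonvanishing factor built from the $q_i$-norms.

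Concretely I expect $\zeta_q(\sigma,\sigma) = c\,(\mu^2 - \epsilon^2)$ where $c = q_1(Y,Y) + \epsilon^2 q_2(X,X)$ or a similar strictly positive combination (positive since $q_1, q_2$ are positive definite and $\epsilon^2 \ge 0$; in the $\Omega_0$ case $\epsilon = 0$ and one checks $c = q_1(Y,Y) > 0$, with $s^+ = s^- = s_1$ the single isothermic curvature sphere). Since $c \ne 0$, $\zeta_q(\sigma(p),\sigma(p)) = 0 \iff \mu^2 = \epsilon^2 \iff \sigma(p) \in s^{\pm}(p)$, which is the claim. One should also handle the limiting description when $\mu = \infty$, i.e.\ $s = s_2$: there $\zeta_q(\sigma_2,\sigma_2) = -\epsilon^2 (d_Y\sigma_2, d_Y\sigma_2) + \text{(}f\text{-terms)} = -\epsilon^2 q_1(Y,Y)$, which vanishes iff $\epsilon = 0$ — consistent, since when $\epsilon = 0$ the congruence $s_2$ is arbitrary and only $s_1$ is genuinely isothermic, matching Lemma~\ref{lem:pointiso}. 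The main obstacle is purely organisational: keeping straight which quadratic form ($q_1$ on $T_1$, $q_2$ on $T_2$) is being evaluated on which vector field ($X \in T_1$, $Y \in T_2$) after the normalisation swap, and verifying that the asymmetric-looking $\beta, \gamma$ contributions indeed cancel — this is guaranteed in principle because $\zeta_q$ was already shown to be a symmetric tensor, so I would invoke that to prune terms rather than cancel them by brute force.
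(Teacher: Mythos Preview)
Your overall strategy --- compute $\zeta_q(\sigma,\sigma)$ for $\sigma = \alpha\sigma_1 + \beta\sigma_2$ and show it vanishes exactly when $\beta = \pm\epsilon\alpha$ --- is precisely what the paper does. But your bookkeeping has gone wrong in a way that would derail the computation, and you have missed the simplification that makes it a two-line argument.

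The indexing is: $q_1 \in \Gamma(T_1^*)^2$ and $q_2 \in \Gamma(T_2^*)^2$ (not the other way round), with $q_1 = (d\sigma_2,d\sigma_2)$ and $q_2 = (d\sigma_1,d\sigma_1)$. Since $X\in\Gamma T_1$ and $Y\in\Gamma T_2$, the quantities $q_1(Y,Y)$ and $q_2(X,X)$ that you propose for the constant $c$ are both \emph{zero}, not ``not~$1$''; your formula for $c$ collapses. The normalisation you actually have is $q_1(X,X)=q_2(Y,Y)=1$, and these are the only values that enter. Your treatment of the $s_2$ case is also wrong: since $s_2$ is a curvature sphere for $T_2$, one has $d_Y\sigma_2\in\Gamma f$, so $(d_Yd_Y\sigma_2,\sigma_2)=-(d_Y\sigma_2,d_Y\sigma_2)=0$; the nonzero contribution is $(d_Xd_X\sigma_2,\sigma_2)=-(d_X\sigma_2,d_X\sigma_2)=-q_1(X,X)=-1$, which you dismissed as an ``$f$-term''. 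Thus $\zeta_q(\sigma_2,\sigma_2)=-1$ for every $\epsilon$, and $s_2$ is never isothermic --- consistent with the proposition, but contradicting your own conclusion.

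The clean route is exactly the curvature-sphere observation just used: $d_X\sigma_1\in\Gamma f$ implies $d_Xd_X\sigma_1\in\Gamma f^\perp$, so $\Delta_q\sigma_1\equiv -\epsilon^2 d_Yd_Y\sigma_1\bmod f^\perp$; similarly $\Delta_q\sigma_2\equiv d_Xd_X\sigma_2\bmod f^\perp$. Pairing with $\sigma\in\Gamma f$ annihilates the $f^\perp$ parts, so by tensoriality
\[
\zeta_q(\sigma,\sigma) = \beta^2(d_Xd_X\sigma_2,\sigma_2) - \epsilon^2\alpha^2(d_Yd_Y\sigma_1,\sigma_1) = -\beta^2 + \epsilon^2\alpha^2.
\]
The cross terms vanish because $d_Yd_Y\sigma_1\in S_1\perp\sigma_2$ (Lie cyclide splitting) and $d_Xd_X\sigma_1\in f^\perp$. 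There are no $\beta,\gamma$ contributions to worry about and no nonvanishing constant $c$ to identify.
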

\begin{proof}
Let $\sigma_{1}$ and $\sigma_{2}$ be the special lifts of the curvature spheres $s_{1}$ and $s_{2}$, respectively, such that 
\[ q_{1} = (d\sigma_{2},d\sigma_{2}) \quad \text{and}\quad q_{2}=(d\sigma_{1},d\sigma_{1}).\]
Since $s_{1}$ and $s_{2}$ are curvature spheres, we have that 
\[ \Delta_{q}\sigma_{1} = -\epsilon^{2}d_{Y}d_{Y}\sigma_{1}\,\bmod\, f^{\perp}\quad \text{and}\quad \Delta_{q}\sigma_{2} = d_{X}d_{X}\sigma_{2}\,\bmod\, f^{\perp}.\]
Let $\sigma\in\Gamma s$ and let $\alpha$ and $\beta$ be smooth functions such that $\sigma=\alpha\sigma_{1}+\beta\sigma_{2}$. Then 
\begin{eqnarray*} 
\zeta_{q}(\sigma,\sigma) = \beta^{2}(d_{X}d_{X}\sigma_{2},\sigma_{2}) -\epsilon^{2}\alpha^{2} (d_{Y}d_{Y}\sigma_{1},\sigma_{1})
=-\beta^{2} +\epsilon^{2}\alpha^{2}.
\end{eqnarray*}
Thus, $\zeta_{q}(\sigma,\sigma)=0$ if and only if $\beta = \pm\epsilon \alpha$, which holds if and only if $\sigma\in \Gamma s^{\pm}$. Since $\zeta_{q}$ is tensorial, this is a pointwise condition. 
\end{proof}

\subsubsection{Christoffel dual lifts}
Suppose that $\epsilon\neq 0$. Recall that $\mathcal{C}^{q}$ is a closed 1-form. Thus, there exist non-trivial functions $\xi^{\pm}$ such that 
\[ d\xi^{\pm} = \mp \epsilon^{-1}\, \mathcal{C}^{q} \xi^{\pm}.\]
Now, $\xi^{+}\xi^{-}$ is constant, and, without loss of generality we will assume that $\xi^{+}\xi^{-}=-1$. We may then define unique (up to reciprocal constant rescaling) lifts $\sigma^{\pm}$ of the isothermic sphere congruences $s^{\pm}$ by 
\[ \sigma^{\pm} := \xi^{\pm}(\sigma_{1}\pm \epsilon \sigma_{2}).\]
A straightforward computation shows that:
\begin{proposition}
\label{prop:chrlifts}
$\eta^{\pm} = \sigma^{\pm}\wedge d\sigma^{\mp}$ and $d\sigma^{+}\curlywedge d\sigma^{-}=0$. 
\end{proposition}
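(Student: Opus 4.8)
The plan is to prove Proposition~\ref{prop:chrlifts} by direct computation, exploiting the structure equations already established, namely conditions (a) and (b) of Subsection~\ref{subsec:inv} together with the definition of the functions $\xi^{\pm}$. First I would record the derivatives of the special curvature-sphere lifts. From condition (a) we have $d_{1}\sigma_{1} = \beta\,\sigma_{2}$ and $d_{2}\sigma_{2} = \gamma\,\sigma_{1}$, while $d_{2}\sigma_{1}$ and $d_{1}\sigma_{2}$ are, by definition, the genuinely $f^{\perp}$-valued ``second derivative'' terms determining $q_{2}$ and $q_{1}$. Writing $\rho^{\pm} := \sigma_{1}\pm\epsilon\sigma_{2}$, one computes
\[
d\rho^{\pm} = d_{1}\rho^{\pm} + d_{2}\rho^{\pm} = (\beta\,\sigma_{2} \pm \epsilon\, d_{1}\sigma_{2}) + (d_{2}\sigma_{1} \pm \epsilon\gamma\,\sigma_{1}),
\]
and hence $d\rho^{\pm} = \pm\epsilon(\gamma\sigma_{1} + d_{1}\sigma_{2}) \ \mathrm{mod}\ (d_{2}\sigma_{1} + \beta\sigma_{2})$ — more precisely I would just keep the full expression and use that $\sigma^{\pm} = \xi^{\pm}\rho^{\pm}$ with $d\xi^{\pm} = \mp\epsilon^{-1}\mathcal{C}^{q}\xi^{\pm}$ and $\mathcal{C}^{q} = \beta + \epsilon^{2}\gamma$.

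Next I would verify $\eta^{\pm} = \sigma^{\pm}\wedge d\sigma^{\mp}$. We already know from Subsection~\ref{subsec:inv} and the isothermic-sphere-congruence discussion that $\eta^{\pm} = \rho^{\pm}\wedge \star d\rho^{\pm}$, and $\star d\rho^{\pm} = d_{1}\rho^{\pm} - d_{2}\rho^{\pm}$ (the Hodge star being $+\mathrm{id}$ on $T_{1}^{*}$ and $-\mathrm{id}$ on $T_{2}^{*}$). Since $\sigma^{+}\wedge d\sigma^{-} = \xi^{+}\xi^{-}\,\rho^{+}\wedge d\rho^{-} + \xi^{+}(d\xi^{-})\,\rho^{+}\wedge\rho^{-}$ and $\xi^{+}\xi^{-} = -1$, I would expand $\rho^{+}\wedge d\rho^{-}$ using the derivative formulas above, collect the $\sigma_{1}\wedge\sigma_{2}$ terms coming from $d\xi^{-}$ and from the $\beta$-, $\gamma$-parts of $d\rho^{-}$, and check that they combine — using precisely $d\xi^{-}/\xi^{-} = \epsilon^{-1}\mathcal{C}^{q} = \epsilon^{-1}(\beta + \epsilon^{2}\gamma)$ — to reproduce $\rho^{+}\wedge(d_{1}\rho^{-} - d_{2}\rho^{-}) = \rho^{+}\wedge\star d\rho^{-}$ up to the sign and the constant $\xi^{+}\xi^{-}=-1$. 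Keeping careful track of which wedge products $\sigma_{i}\wedge d_{j}\sigma_{i}$ survive modulo $\wedge^{2}f$ (only $\sigma_{1}\wedge d_{2}\sigma_{1}$ and $\sigma_{2}\wedge d_{1}\sigma_{2}$ do), this should fall out as an identity; by symmetry the same holds for $\eta^{-} = \sigma^{-}\wedge d\sigma^{+}$.

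Finally, for $d\sigma^{+}\curlywedge d\sigma^{-} = 0$ I would compute $d\sigma^{\pm} = \xi^{\pm}(d\rho^{\pm} \mp \epsilon^{-1}\mathcal{C}^{q}\rho^{\pm})$ and observe that the correction term exactly cancels the $\mathfrak{h}$-part ($\sigma_{1}$- and $\sigma_{2}$-components, i.e.\ the $\beta\sigma_{2}$ and $\epsilon\gamma\sigma_{1}$ pieces) of $d\rho^{\pm}$, leaving $d\sigma^{\pm} = \xi^{\pm}(\pm\epsilon\, d_{1}\sigma_{2} + d_{2}\sigma_{1})$, i.e.\ $d\sigma^{+} = \xi^{+}(\epsilon\,d_{1}\sigma_{2} + d_{2}\sigma_{1})$ and $d\sigma^{-} = \xi^{-}(-\epsilon\,d_{1}\sigma_{2} + d_{2}\sigma_{1})$. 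Then
\[
d\sigma^{+}\curlywedge d\sigma^{-}(X,Y) = \xi^{+}\xi^{-}\bigl[(\epsilon\, d_{1,X}\sigma_{2} + d_{2,X}\sigma_{1})\wedge(-\epsilon\,d_{1,Y}\sigma_{2} + d_{2,Y}\sigma_{1}) - (X\leftrightarrow Y)\bigr];
\]
since $d_{1}\sigma_{2}$ depends only on $T_{1}$ and $d_{2}\sigma_{1}$ only on $T_{2}$, for $X\in T_{1}, Y\in T_{2}$ the $d_{1,Y}\sigma_{2}$ and $d_{2,X}\sigma_{1}$ terms vanish, and one is left with $\epsilon\,d_{1,X}\sigma_{2}\wedge d_{2,Y}\sigma_{1} - \epsilon\, d_{2,Y}\sigma_{1}\wedge d_{1,X}\sigma_{2}$... which does not obviously vanish, so here I would instead invoke closedness of $\eta^{\pm}$: since $\eta^{+} = \sigma^{+}\wedge d\sigma^{-}$ is closed and $0 = d\eta^{+} = d\sigma^{+}\curlywedge d\sigma^{-} + \sigma^{+}\wedge d^{2}\sigma^{-} = d\sigma^{+}\curlywedge d\sigma^{-}$, the vanishing is immediate. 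The main obstacle is bookkeeping: correctly identifying which of the many $\sigma_{i}\wedge d_{j}\sigma_{k}$ terms are nonzero modulo $\wedge^{2}f$ and verifying that the $\mathcal{C}^{q}$-twist in $\xi^{\pm}$ precisely kills the unwanted $\mathfrak{h}$-contributions — once that is set up, both claims are short, with $d\sigma^{+}\curlywedge d\sigma^{-}=0$ following most cleanly from $d\eta^{+}=0$.
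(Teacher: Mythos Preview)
Your argument for $\eta^{\pm}=\sigma^{\pm}\wedge d\sigma^{\mp}$ by direct expansion is correct, and your final argument for $d\sigma^{+}\curlywedge d\sigma^{-}=0$ via $0=d\eta^{+}=d\sigma^{+}\curlywedge d\sigma^{-}$ is clean and exactly the right way to finish; the paper gives no proof beyond ``a straightforward computation shows that'', and what you have written is precisely such a computation.

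One small correction to the part you abandon anyway: your claim that the $\mathcal{C}^{q}$-twist in $\xi^{\pm}$ leaves $d\sigma^{\pm}=\xi^{\pm}(\pm\epsilon\,d_{1}\sigma_{2}+d_{2}\sigma_{1})$ is not quite right. Writing it out, the $T_{1}^{*}$-part of $d\sigma^{+}/\xi^{+}$ is $-\epsilon^{-1}\beta\,\sigma_{1}+\epsilon\,d_{1}\sigma_{2}$ and the $T_{2}^{*}$-part is $d_{2}\sigma_{1}-\epsilon^{2}\gamma\,\sigma_{2}$: the twist cancels the $\beta\sigma_{2}$ and $\epsilon\gamma\sigma_{1}$ pieces of $d\rho^{+}$ but introduces new $f$-valued terms $-\epsilon^{-1}\beta\sigma_{1}$ and $-\epsilon^{2}\gamma\sigma_{2}$. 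This is why your direct attempt at $d\sigma^{+}\curlywedge d\sigma^{-}$ did not collapse as hoped. Since you correctly switch to the closedness argument, this does not affect the validity of your proof.
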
 
We call these lifts the \textit{Christoffel dual lifts} of $s^{\pm}$. 

\section{Transformations of Lie applicable surfaces}
\label{sec:trafos}
In this section we shall review and expand on the transformation theory of Lie applicable surfaces presented by Clarke \cite{C2012i}. In particular we shall show how the middle potential behaves under such transformations. 

Suppose that $f$ is a Lie applicable surface with closed 1-form $\eta$. 

\begin{theorem}[{\cite[Lemma 4.2.6]{C2012i}}]
$\{d+t\eta\}_{t\in \mathbb{R}}$ is a 1-parameter family of flat metric connections. 
\end{theorem}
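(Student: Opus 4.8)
The plan is to verify two things: that each connection $d + t\eta$ is metric, and that each is flat. Metricity is immediate: $\eta$ takes values in $f \wedge f^\perp \subseteq \wedge^2 \underline{\mathbb{R}}^{4,2} \cong \mathfrak{o}(4,2)$, so $\eta(X)$ is a skew-symmetric endomorphism for every $X$, and adding a skew-symmetric bundle-valued $1$-form to the metric trivial connection $d$ yields another metric connection. So the content is flatness.

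For flatness, I would compute the curvature $R^{t}$ of $d + t\eta$ directly. Since $d$ is the flat trivial connection, the standard formula gives
\[ R^{t} = t\, d\eta + \tfrac{1}{2} t^{2} [\eta \wedge \eta], \]
where $d\eta$ means the exterior derivative of $\eta$ as an $\mathfrak{o}(4,2)$-valued $1$-form and $[\eta \wedge \eta]$ is the bracket. By hypothesis $\eta$ is closed and $[\eta \wedge \eta] = 0$ (in the umbilic-free case the latter follows from closedness by Proposition~\ref{prop:mceqn}, but it is part of Definition~\ref{def:lieapp} in general), so $R^{t} = 0$ for all $t$. This is really just unwinding that the Maurer--Cartan-type equation $d\eta + \tfrac{1}{2}[\eta \wedge \eta] = 0$ is exactly the zero-curvature condition for the family $d + t\eta$, using that $d^{2} = 0$.

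The one point that needs a word of care is the identification between the $\wedge^{2}$-picture and the $\mathfrak{o}(4,2)$-picture: one must check that the Lie bracket of endomorphisms, when pulled back along $a \wedge b \mapsto (a \wedge b)$, is the operation $[\,\cdot \wedge \cdot\,]$ appearing in Definition~\ref{def:lieapp}, so that ``$[\eta \wedge \eta] = 0$'' genuinely kills the quadratic term in $R^{t}$. This is the ``made without warning'' identification flagged in the Remark after the $\wedge^{2}\mathbb{R}^{4,2} \cong \mathfrak{o}(4,2)$ statement, and it is routine but worth noting. I do not expect any real obstacle here; the main thing is simply to assemble the curvature formula correctly and observe that both terms vanish by hypothesis.
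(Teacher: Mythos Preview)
Your proposal is correct and matches the paper's own proof essentially verbatim: compute $R^{d+t\eta} = t\,d\eta + \tfrac{t^{2}}{2}[\eta\wedge\eta] = 0$ from the hypotheses, and note that metricity follows from the skew-symmetry of $\eta$. The additional remarks you make about the $\wedge^{2}\mathbb{R}^{4,2}\cong\mathfrak{o}(4,2)$ identification and about Proposition~\ref{prop:mceqn} are accurate but not needed beyond what the paper already records.
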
 
\begin{proof}
The curvature of the connection $d+t\eta$ is given by 
\[ R^{d+t\eta} = t d\eta + \frac{t^{2}}{2}[\eta\wedge \eta] =0.\]
The fact that $d+t\eta$ is a metric connection follows from the skew-symmetry of $\eta$.  
\end{proof}

Our choice of $\eta$ in the gauge orbit was arbitrary, so it is prudent to examine how these connections change when we use a different member of the gauge orbit. Suppose that $\tilde{\eta} = \eta - d\tau$ for some $\tau \in \Gamma (\wedge^{2}f)$. Then a straightforward computation shows that:

\begin{lemma}[{\cite[Lemma 4.5.1]{C2012i}}] 
\label{lem:flatgorb} $d+t\tilde{\eta} = \exp(t\tau)\cdot (d+t\eta).$
\end{lemma}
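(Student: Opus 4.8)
The plan is to verify the identity $d+t\tilde\eta = \exp(t\tau)\cdot(d+t\eta)$ by a direct computation, where $\exp(t\tau)$ denotes the gauge transformation acting on connections in the usual way, namely $(g\cdot D)\varphi = g\,D(g^{-1}\varphi)$ for a section $\varphi$ and a bundle automorphism $g$. First I would record that $\tau\in\Gamma(\wedge^2 f)$ acts as a nilpotent endomorphism of $\underline{\mathbb{R}}^{4,2}$: since $\tau$ takes values in $f\wedge f\subset f\wedge f^\perp$, we have $\tau f^\perp = 0$ and $\tau(\underline{\mathbb{R}}^{4,2})\le f$, hence $\tau^2 = 0$. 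Consequently $\exp(t\tau) = \mathrm{id} + t\tau$ and $\exp(-t\tau) = \mathrm{id} - t\tau$, which makes the conjugation completely explicit and avoids any convergence issues.

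Next I would expand $(\exp(t\tau)\cdot(d+t\eta))\varphi = (\mathrm{id}+t\tau)\bigl((d+t\eta)((\mathrm{id}-t\tau)\varphi)\bigr)$ for $\varphi\in\Gamma\underline{\mathbb{R}}^{4,2}$ and collect terms by powers of $t$. The $t^0$ term is $d\varphi$. The $t^1$ terms are $\tau\,d\varphi - d(\tau\varphi) + \eta\varphi = -( d\tau)\varphi + \eta\varphi = \tilde\eta\varphi$, using the Leibniz rule $d(\tau\varphi) = (d\tau)\varphi + \tau\,d\varphi$. The $t^2$ terms are $-\tau\,d(\tau\varphi)$ from the cross term together with $-\eta\tau\varphi$ from $t\eta$ acting on $-t\tau\varphi$, plus $\tau\eta\varphi$ from $t\tau$ acting on $t\eta\varphi$; I would need to check these cancel. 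Here $\tau\varphi\in\Gamma f$ and $\eta$ takes values in $f\wedge f^\perp$ with $\eta f = 0$, so $\eta\tau\varphi = 0$; and $\tau\,d(\tau\varphi) = 0$ since $d(\tau\varphi)\in\Omega^1(f^\perp)$ (because $\tau\varphi\in\Gamma f$ and $f^{(1)}=f^\perp$) while $\tau f^\perp = 0$. The remaining potential $t^2$ term $\tau\eta\varphi$ also vanishes because $\eta\varphi\in\Gamma f$ (as $\eta$ takes values in $f\wedge f^\perp$ so $\eta\,\underline{\mathbb{R}}^{4,2}\le f$) and again $\tau f \le \ldots$ — wait, $\tau f$ need not vanish; but $\tau\eta\varphi\in\Gamma(\tau f)\le\Gamma f$, and this term is exactly cancelled by $-\tau\,d(\tau\varphi)$? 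No: I would instead note $\tau\eta\varphi$ pairs with the cross-term $t\tau\cdot t\eta\varphi$ and $-t\tau\cdot t\eta\tau$... Let me simply say: the three order-$t^2$ contributions are $-\tau\,d(\tau\varphi)$, $+\tau\eta\varphi$, and $-\eta\tau\varphi$, of which the second vanishes since $\eta\tau\varphi=0$, wait that's the third. The careful bookkeeping is the substance of the proof; I would organize it as: $\eta\tau\varphi = 0$ (as $\tau\varphi\in\Gamma f$, $\eta f=0$), $\tau\,d(\tau\varphi)=0$ (as $d(\tau\varphi)\in\Omega^1(f^\perp)$, $\tau f^\perp=0$), leaving $\tau\eta\varphi$, which is cancelled by the $-t\tau d\varphi$-type contribution...

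Rather than risk an error in the sketch, the honest statement of the plan is: the only subtlety, and the main obstacle, is the careful tracking of which vector-subbundle each intermediate expression lands in (sections of $f$, of $f^\perp$, or of $\wedge^2 f$), and repeatedly invoking the three structural facts $\tau^2=0$, $\eta f = 0$, and $f^{(1)}=f^\perp$ so that all terms of order $t^2$ and higher cancel. Once those are in place the identity drops out term by term. I expect no genuine difficulty beyond this bookkeeping; the computation is the "straightforward computation" the statement advertises, and I would present it compactly by checking the action on an arbitrary section $\varphi\in\Gamma\underline{\mathbb{R}}^{4,2}$ and grouping by powers of $t$, remarking at the end that since both sides are first-order differential operators agreeing on all sections, they agree as connections.
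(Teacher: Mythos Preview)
Your approach is correct and is exactly what the paper intends: it states the lemma after the phrase ``a straightforward computation shows that'' and gives no further proof, so the direct expansion you outline is the argument.

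One correction to your bookkeeping, since you flagged your own uncertainty there: the claim $\eta\,\underline{\mathbb{R}}^{4,2}\le f$ is false. For $a\in f$, $b\in f^\perp$ and arbitrary $c$, $(a\wedge b)c=(a,c)b-(b,c)a$ lands in $f^\perp+f=f^\perp$, not in $f$. The vanishing of $\tau\eta\varphi$ still follows, but from $\eta\varphi\in\Gamma f^\perp$ together with $\tau f^\perp=0$. With that fix, all three $t^2$ terms $-\eta\tau\varphi$, $-\tau\,d(\tau\varphi)$, $\tau\eta\varphi$ vanish individually (respectively via $\eta f=0$, $d(\Gamma f)\le\Omega^1(f^\perp)$ and $\tau f^\perp=0$, and $\eta\varphi\in\Gamma f^\perp$ with $\tau f^\perp=0$), and the $t^3$ term $-\tau\eta\tau\varphi$ vanishes since already $\eta\tau\varphi=0$. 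No cancellations between terms are needed; each higher-order term is zero on its own.
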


\subsection{Calapso transforms}
\label{subsec:cal}
Since $\{d^{t}:=d+t\eta\}_{t\in\mathbb{R}}$ is a 1-parameter family of flat metric connections, for each $t\in\mathbb{R}$, there exists a local orthogonal trivialising gauge transformation $T(t):\Sigma\to O(4,2)$, i.e., 
\begin{equation}
\label{eqn:calgauge}
T(t)\cdot d^{t} =d.
\end{equation}
\begin{definition}
$f^{t}:= T(t)f$ is called a Calapso transform of $f$. 
\end{definition}
Now suppose that $\tilde{\eta} = \eta -d\tau$, and let $\widetilde{T}(t)$ denote the corresponding local orthogonal trivialising gauge transformations. Then from Lemma~\ref{lem:flatgorb}, it follows that 
\[\widetilde{T}(t) = T(t)\exp(-t\tau).\]
Since $(\wedge^{2}f)f=0$, it follows that the Calapso transforms are well defined on the gauge orbit $[\eta]$. 

Let $\sigma^{t} := T(t)\sigma$ be a section of $f^{t}$. Then by Equation~(\ref{eqn:calgauge}),
\begin{equation*} 
d\sigma^{t} = d(T(t)\sigma) = T(t)(d+t\eta)\sigma = T(t)d\sigma. 
\end{equation*}
From this one can easily deduce that the contact and immersion conditions hold for $f^{t}$ and thus $f^{t}$ is a Legendre immersion. Moreover, we can deduce that if $s(p)$ is a curvature sphere of $f$ at $p$ then $s^{t}(p):= T(t)s(p)$ is a curvature sphere of $f^{t}$ at $p$ and the corresponding curvature spaces coincide.

\begin{theorem}
\label{thm:calform}
$\eta^{t}:= Ad_{T(t)}\cdot\eta$
is a closed 1-form with values in $\Omega^{1}(f^{t}\wedge (f^{t})^{\perp})$ with $[\eta^{t}\wedge \eta^{t}]=0$ and $q^{t}=q$.
Hence, $f^{t}$ is a Lie applicable surface. 
\end{theorem}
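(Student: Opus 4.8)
The plan is to verify each assertion in turn by transporting structures along the trivialising gauge transformation $T(t)$, exploiting that $T(t)$ is an orthogonal bundle automorphism intertwining $d^{t}$ and $d$. First I would record the defining relation $d\circ T(t)=T(t)\circ d^{t}$, equivalently $T(t)^{-1}\,d\,T(t)=d+t\eta$, from which $Ad_{T(t)}$ converts $d$-covariant constant structures into $d^{t}$-covariant ones and vice versa. Since $f^{t}=T(t)f$ and $T(t)$ preserves the metric, we immediately get $(f^{t})^{\perp}=T(t)(f^{\perp})$, hence $f^{t}\wedge(f^{t})^{\perp}=Ad_{T(t)}(f\wedge f^{\perp})$; because $\eta\in\Omega^{1}(f\wedge f^{\perp})$ this shows $\eta^{t}=Ad_{T(t)}\cdot\eta$ genuinely takes values in $f^{t}\wedge(f^{t})^{\perp}$. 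Orthogonality of $T(t)$ also gives that $\eta^{t}$ is skew-symmetric, so it lies in $\Omega^{1}(f^{t}\wedge(f^{t})^{\perp})$ as claimed.

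Next I would establish closedness and the bracket condition. The cleanest route is to compute the exterior derivative of $\eta^{t}$ with respect to the trivial connection $d$, but rewrite everything in terms of $d^{t}$: for an $\mathfrak{o}(4,2)$-valued $1$-form, $d\eta^{t}=d(Ad_{T(t)}\eta)$, and since $T(t)$ conjugates $d$ to $d^{t}$ one finds $d\eta^{t}+\tfrac12[\eta^{t}\wedge\eta^{t}]=Ad_{T(t)}\bigl(d^{t}\eta+\tfrac12[\eta\wedge\eta]\bigr)$ — i.e. the Maurer--Cartan expression transforms equivariantly. But $d^{t}\eta=d\eta+t[\eta\wedge\eta]$, so the right-hand side equals $Ad_{T(t)}\bigl(d\eta+\tfrac{1+2t}{2}[\eta\wedge\eta]\bigr)$; since $\eta$ is closed and $[\eta\wedge\eta]=0$ by hypothesis, both $d\eta^{t}=0$ and $[\eta^{t}\wedge\eta^{t}]=0$ follow. (Alternatively, and perhaps more transparently, one can use the flat connection $d^{t}$ directly: $d^{t}$-flatness plus $T(t)\cdot d^{t}=d$ reduces the computation to the already-proven flatness of $d+s\eta$.)

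Finally I would check $q^{t}=q$. Pick a section $\sigma\in\Gamma f$ and set $\sigma^{t}=T(t)\sigma\in\Gamma f^{t}$; the excerpt already records $d\sigma^{t}=T(t)d\sigma$. Then for $X,Y\in\Gamma T\Sigma$ the endomorphism of $f^{t}$ sending $\sigma^{t}\mapsto\eta^{t}(X)d_{Y}\sigma^{t}$ equals $T(t)\circ\bigl(\sigma\mapsto\eta(X)d_{Y}\sigma\bigr)\circ T(t)^{-1}$, using $\eta^{t}(X)=Ad_{T(t)}(\eta(X))$ and $d_{Y}\sigma^{t}=T(t)d_{Y}\sigma$. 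Conjugate endomorphisms have equal trace, so $q^{t}(X,Y)=q(X,Y)$; in particular $q^{t}$ is non-zero, so Definition~\ref{def:lieapp} is satisfied and $f^{t}$ is Lie applicable, with the $\Omega$- versus $\Omega_{0}$-dichotomy preserved since non-degeneracy of $q$ is preserved.

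I do not expect a serious obstacle here: the whole statement is a matter of naturality of the constructions under orthogonal gauge transformations. The one point requiring a little care is the bookkeeping in the Maurer--Cartan identity — making sure the $d$ versus $d^{t}$ substitution $d^{t}\eta=d\eta+t[\eta\wedge\eta]$ is applied consistently so that the hypotheses $d\eta=0$ and $[\eta\wedge\eta]=0$ are exactly what is needed — and, on the geometric side, confirming that $d\sigma^{t}=T(t)d\sigma$ is used correctly so that the trace computation is a genuine conjugation rather than merely a similarity up to lower-order terms. Neither is more than a routine verification.
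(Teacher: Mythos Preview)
Your proposal is correct and follows essentially the same route as the paper: transport everything by $T(t)$ using $T(t)\cdot d^{t}=d$, then read off closedness, the bracket condition, and $q^{t}=q$ by naturality. The paper computes $d\eta^{t}=Ad_{T(t)}(d^{t}\eta)=Ad_{T(t)}(d\eta+t[\eta\wedge\eta])=0$ and $[\eta^{t}\wedge\eta^{t}]=Ad_{T(t)}[\eta\wedge\eta]=0$ as two separate one-line identities, whereas you package them into a single Maurer--Cartan expression; just be aware that your combined equation $d\eta^{t}+\tfrac12[\eta^{t}\wedge\eta^{t}]=0$ does not by itself give the two conclusions individually --- you still need the direct observation $[\eta^{t}\wedge\eta^{t}]=Ad_{T(t)}[\eta\wedge\eta]=0$ (which you have, implicitly) to split them.
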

\begin{proof}
The closedness of $\eta^{t}$ follows from 
\[ d\eta^{t} = (T(t)\cdot d^{t}) Ad_{T(t)}\cdot\eta = T(t)\cdot d^{t}\eta=T(t)\cdot(d\eta + t[\eta\wedge \eta])=0.\]
Furthermore, 
\[ [\eta^{t}\wedge \eta^{t}] = Ad_{T(t)}\cdot [\eta\wedge \eta]=0.\]
Finally, for $\sigma^{t}:= T(t)\sigma$
\[ \eta^{t}(X)d_{Y}\sigma^{t} = (Ad_{T(t)}\cdot\eta(X)) (T(t)\cdot (d+t\eta)(Y))\sigma^{t} = T(t)\eta(X)d_{Y}\sigma.\]
Thus, 
\[ q^{t}(X,Y) = tr(\sigma^{t}\mapsto \eta^{t}(X)d_{Y}\sigma^{t})\]
coincides with $q(X,Y)$ for all $X,Y\in\Gamma T\Sigma$. 
\end{proof}

We will now see how the 1-parameter family of flat connections of a Calapso transform are related to those of the original surface:

\begin{proposition}
\label{prop:calgauge}
For any $s\in\mathbb{R}$, 
\[ d+s\eta^{t} = T(t)\cdot (d+(s+t)\eta).\]
Therefore the local trivialising orthogonal gauge transformations of $d+s\eta^{t}$ are 
\[ T^{t}(s) = T(s+t)T^{-1}(t).\]
\end{proposition}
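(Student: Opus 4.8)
The plan is to prove the identity $d+s\eta^{t} = T(t)\cdot(d+(s+t)\eta)$ directly by computing the right-hand side, and then to read off the statement about trivialising gauge transformations as an immediate corollary. First I would recall the definitions: $\eta^{t} = Ad_{T(t)}\cdot\eta$ by Theorem~\ref{thm:calform}, and $T(t)$ is characterised by $T(t)\cdot d^{t} = d$ where $d^{t} = d+t\eta$, i.e.\ $T(t)\cdot d = d - t\,Ad_{T(t)}\cdot\eta = d - t\eta^{t}$ (using the standard formula for how a gauge transformation acts on a connection, $g\cdot(d+A) = d + Ad_{g}\cdot A - (dg)g^{-1}$, together with $g\cdot d = g\cdot(d+0)$).

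The main computation is then a one-liner: since $T(t)$ acts on connections, and using that $Ad_{T(t)}$ is additive over the $\wedge^{2}\underline{\mathbb{R}}^{4,2}$-valued part,
\[ T(t)\cdot(d+(s+t)\eta) = T(t)\cdot\bigl((d+t\eta) + s\eta\bigr) = \bigl(T(t)\cdot d^{t}\bigr) + s\,Ad_{T(t)}\cdot\eta = d + s\eta^{t}. \]
The only point requiring a word of care is the linearity of the gauge action in the connection-form argument when the base connection is simultaneously transformed: that is, $T(t)\cdot(d^{t}+s\eta) = (T(t)\cdot d^{t}) + s\,Ad_{T(t)}\cdot\eta$, which holds because the inhomogeneous Maurer--Cartan term $-(dT(t))T(t)^{-1}$ depends only on $T(t)$ and not on the added $1$-form $s\eta$, while the homogeneous term transforms by $Ad_{T(t)}$.

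For the second assertion, I would argue that $T^{t}(s) := T(s+t)T(t)^{-1}$ is an orthogonal (being a product of orthogonal gauge transformations) local trivialising gauge transformation for $d+s\eta^{t}$: applying $T^{t}(s)$ and using the just-proved identity,
\[ T^{t}(s)\cdot(d+s\eta^{t}) = T(s+t)T(t)^{-1}\cdot T(t)\cdot(d+(s+t)\eta) = T(s+t)\cdot(d+(s+t)\eta) = d, \]
by the defining property~(\ref{eqn:calgauge}) of $T(s+t)$. I do not anticipate a genuine obstacle here; the subtlety, such as it is, lies entirely in being careful about the affine (rather than linear) nature of the gauge action on connections and in keeping the bookkeeping of which term is $Ad$-equivariant straight. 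One should also note that $T^{t}(s)$ is only determined up to the usual ambiguity (a locally constant orthogonal transformation), consistent with the non-uniqueness already present in $T(\cdot)$.
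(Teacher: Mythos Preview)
Your proof is correct and follows essentially the same approach as the paper's: both arguments use the definition $\eta^{t}=Ad_{T(t)}\cdot\eta$ together with the defining property $T(t)\cdot d^{t}=d$ and the affine behaviour of the gauge action, and the second assertion is an immediate consequence. The only cosmetic difference is that the paper computes from the left-hand side to the right (writing $d+s\eta^{t}=T(t)\cdot(T(t)^{-1}\cdot d+s\eta)=T(t)\cdot(d+(s+t)\eta)$), while you run the same chain of equalities in the opposite direction.
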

\begin{proof}
Using Theorem~\ref{thm:calform}, we have that 
\[ d+s\eta^{t} = d+sAd_{T(t)}\cdot\eta = T(t)\cdot( T^{-1}(t) \cdot d + s\eta) = T(t)\cdot( d+(s+t)\eta),\]
and the result follows. 
\end{proof}

From Proposition~\ref{prop:calgauge} we can quickly deduce the analogue of the permutability result of Hertrich-Jeromin~\cite[\S5.5.9]{H2003} for Calapso transforms of isothermic surfaces:
\[T^{t}(s) T(t) = T(s+t).\]

Now let us assume that $f$ is umbilic-free and we are using the middle potential, i.e., $\eta = \eta^{mid}$. 

\begin{lemma}
\label{lem:calcyc}
The Lie cyclides of $f^{t}$ are given by 
\[ S_{1}^{t} = T(t)S_{1} \quad \text{and} \quad S_{2}^{t} = T(t)S_{2}.\]
Hence, the induced splitting of the trivial connection $d = \mathcal{D}^{t}+\mathcal{N}^{t}$ satisfies
\[ \mathcal{D}^{t} = T(t)\cdot (\mathcal{D} +t\eta_{\mathfrak{h}}) \quad \text{and}\quad \mathcal{N}^{t} = T(t)\cdot (\mathcal{N} +t\eta_{\mathfrak{m}}).\]
\end{lemma}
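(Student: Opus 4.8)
The plan is to identify the Lie cyclide bundles $S_1^t, S_2^t$ of $f^t$ directly from their definition and then read off the consequences for the splitting of the trivial connection. Recall that $S_1 = \langle \sigma_1, d_Y\sigma_1, d_Y d_Y \sigma_1\rangle$ and $S_2 = \langle \sigma_2, d_X d_X\sigma_2, \dots\rangle$ are built out of a curvature sphere lift and its derivatives along the \emph{opposite} curvature direction. Since $f^t$ is a Legendre immersion (shown above) with curvature spheres $s_i^t = T(t)s_i$ and with curvature spaces $T_i$ unchanged, the bundle $S_1^t$ is spanned by $\sigma_1^t = T(t)\sigma_1$ and its $d$-derivatives along $T_2$. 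The key computational input is the identity established in the proof of Theorem~\ref{thm:calform}, namely $d(T(t)\sigma) = T(t)(d + t\eta)\sigma$; iterating this, every $d$-derivative of $\sigma_1^t$ along $T_2$ equals $T(t)$ applied to the corresponding $(d + t\eta)$-iterated derivative of $\sigma_1$. The first point to nail down is that $(d + t\eta)$-derivatives of $\sigma_1$ along $T_2$ span the same bundle $S_1$: this follows because $\eta = \eta^{mid}$ and, by Lemma~\ref{lem:N} together with $\eta^{mid} = \sigma_1\wedge\star d\sigma_1 + \epsilon^2\sigma_2\wedge\star d\sigma_2$, the term $t\eta$ only contributes vectors already in $f \le S_1$ when acting along $T_2$ on sections of $S_1$. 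Hence $S_1^t = T(t)S_1$, and symmetrically $S_2^t = T(t)S_2$.

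With the cyclide splitting identified, the decomposition $d = \mathcal{D}^t + \mathcal{N}^t$ of the trivial connection on $\underline{\mathbb{R}}^{4,2}$ with respect to $S_1^t \oplus S_2^t$ is determined: $\mathcal{D}^t$ is the $S_i^t$-block-diagonal part of $d$, and $\mathcal{N}^t$ the off-diagonal part. I would conjugate back by $T(t)$: for a section $\zeta^t = T(t)\zeta$ of $S_i^t$, we have $d\zeta^t = T(t)(d + t\eta)\zeta = T(t)(\mathcal{D}\zeta + \mathcal{N}\zeta + t\eta_{\mathfrak{h}}\zeta + t\eta_{\mathfrak{m}}\zeta)$. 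Now $\mathcal{D}\zeta + t\eta_{\mathfrak{h}}\zeta$ lies in $S_i$ (since $\eta_{\mathfrak{h}}$ takes values in $\mathfrak{h} = (S_1\wedge S_1)\oplus(S_2\wedge S_2)$, which preserves each $S_i$) and $\mathcal{N}\zeta + t\eta_{\mathfrak{m}}\zeta$ lies in $S_j$, $j\neq i$ (since $\eta_{\mathfrak{m}}$ takes values in $\mathfrak{m} = S_1\wedge S_2$, which swaps the summands). Applying $T(t)$, the first piece lands in $S_i^t$ and the second in $S_j^t$, so by uniqueness of the splitting $\mathcal{D}^t\zeta^t = T(t)(\mathcal{D} + t\eta_{\mathfrak{h}})\zeta$ and $\mathcal{N}^t\zeta^t = T(t)(\mathcal{N} + t\eta_{\mathfrak{m}})\zeta$, which is exactly the claimed formula written in the $\mathrm{Ad}_{T(t)}$-notation.

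The main obstacle is the first step: verifying carefully that $t\eta$ does not enlarge the span when one forms the iterated derivatives defining $S_1^t$, i.e.\ that the derived-flag construction is genuinely stable under replacing $d$ by $d + t\eta$ along the relevant curvature direction. Concretely one must check that $(d+t\eta)_Y\sigma_1$, $(d+t\eta)_Y(d+t\eta)_Y\sigma_1$ still lie in $S_1 = \langle \sigma_1, d_Y\sigma_1, d_Yd_Y\sigma_1\rangle$; this uses $\eta^{mid}(T_2)s_1 = 0$ (the $\epsilon^2\sigma_2\wedge\star d\sigma_2$ term annihilates $\sigma_1$ along $T_2$, and $\sigma_1\wedge\star d\sigma_1$ applied along $T_2$ to $\sigma_1$ gives a multiple of $\sigma_1$) plus the analogous statement for the derivatives, which requires knowing how $\eta^{mid}(T_2)$ acts on $d_Y\sigma_1 \in \Gamma f$ — and here $\eta^{mid} f = 0$ closes the loop. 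Once this stability is in hand, everything else is the routine bookkeeping sketched above.
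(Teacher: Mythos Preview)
Your approach is the paper's: use the gauge identity $d(T(t)\sigma)=T(t)(d+t\eta)\sigma$ to translate the derived-flag construction of $S_i^t$ into iterated $(d+t\eta^{mid})$-derivatives of $\sigma_i$, and check that these stay in $S_i$. The second paragraph (the connection splitting) is correct and simply makes explicit what the paper dispatches in one line.

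There is, however, a slip in your justification of the second-derivative stability. You write that one needs to know how $\eta^{mid}(T_2)$ acts on ``$d_Y\sigma_1\in\Gamma f$'' and then invoke $\eta^{mid}f=0$. But $d_Y\sigma_1$ is \emph{not} a section of $f$: the curvature sphere $s_1$ has curvature subbundle $T_1$, so it is $d_X\sigma_1$ (for $X\in\Gamma T_1$) that lies in $f$, whereas $d_Y\sigma_1$ for $Y\in\Gamma T_2$ lies in $f_2\cap S_1$ but generically outside $f$. The correct fact, which the paper uses, is that $\eta^{mid}(Y)d_Y\sigma_1\in\Gamma s_1$. This follows directly from the formula: for $Y\in\Gamma T_2$ one has $\eta^{mid}(Y)\equiv -\sigma_1\wedge d_Y\sigma_1$ modulo $\wedge^2 f$ (since $d_Y\sigma_2\in\Gamma f$), and $(\sigma_1\wedge d_Y\sigma_1)d_Y\sigma_1=-(d_Y\sigma_1,d_Y\sigma_1)\sigma_1$ because $(\sigma_1,d_Y\sigma_1)=0$. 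Alternatively, Proposition~\ref{prop:mceqn} gives $\eta(T_2)\le f\wedge f_2$; since $d_Y\sigma_1\in\Gamma f_2$ and $f_2\le f^\perp$, one has $(f\wedge f_2)f_2\le f\le S_1$. Either route yields the needed stability, but not via $\eta^{mid}f=0$.
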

\begin{proof}
Let $Y\in\Gamma T_{2}$ and $\sigma_{1}^{t}=T(t)\sigma_{1}$ be a lift of the curvature sphere $s_{1}^{t}$. Then 
\[ d_{Y}\sigma_{1}^{t} = d_{Y}(T(t)\sigma_{1}) = (T(t)\cdot d^{t}_{Y})T(t)\sigma_{1} = T(t)(d^{t}_{Y}\sigma_{1}) = T(t)d_{Y}\sigma_{1},\]
since $\eta^{mid} f=0$. Thus, $d\sigma^{t}_{1}(T_{2}) = T(t)d\sigma_{1}(T_{2})$. Furthermore, 
\[ d_{Y}d_{Y}\sigma_{1}^{t}= d_{Y}d_{Y}(T(t)\sigma_{1}) = (T(t)\cdot d_{Y}^{t})(T(t)d_{Y}\sigma_{1}) = T(t)d_{Y}^{t}d_{Y}\sigma_{1}.\]
Now, since we are using the middle potential, $\eta^{mid}(Y)d_{Y}\sigma_{1}\in \Gamma s_{1}$. Thus, 
$d_{Y}d_{Y}\sigma_{1}^{t} \in \Gamma T(t)S_{1}$ and
\[ S_{1}^{t} = s^{t}_{1}\oplus d\sigma^{t}_{1}(T_{2}) \oplus \langle d_{Y}d_{Y}\sigma_{1}^{t} \rangle = T(t)S_{1}.\]
Similarly, $S_{2}^{t}= T(t)S_{2}$. From
\[ d = T(t)\cdot (d+t\eta^{mid}) = T(t)\cdot (\mathcal{D} + \mathcal{N} + t\eta_{\mathfrak{h}}+ t\eta_{\mathfrak{m}}), \]
one can deduce the remainder of the lemma. 
\end{proof}

Blaschke~\cite{B1929} showed that Lie applicable surfaces are the only surfaces that are not determined by the Lie invariant metric and Darboux cubic form. Therefore the following corollary comes as no surprise: 

\begin{corollary}
\label{cor:calmetric}
The Lie-invariant metric $g^{L}$ is preserved by Calapso transform and
the Darboux cubic form $\mathcal{C}^{t}\in \Gamma (S^{3}T^{*}\Sigma\otimes (\wedge^{2}f^{t})^{*})$ of $f^{t}$ satisfies
\begin{eqnarray} 
\label{eqn:calcub}
\mathcal{C}^{t}\circ T(t) = \mathcal{C},
\end{eqnarray}
that is, for $\tau\in\Gamma (\wedge^{2}f)$ and $X,Y,Z\in\Gamma T\Sigma$, 
\[ \mathcal{C}^{t}(X,Y,Z)(T(t)\cdot \tau) = \mathcal{C}(X,Y,Z)\tau.\]
\end{corollary}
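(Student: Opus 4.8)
The plan is to exploit Lemma~\ref{lem:calcyc}, which already gives us the Lie cyclide splitting of $f^{t}$ and, crucially, the formulas $\mathcal{D}^{t} = T(t)\cdot(\mathcal{D}+t\eta_{\mathfrak{h}})$ and $\mathcal{N}^{t} = T(t)\cdot(\mathcal{N}+t\eta_{\mathfrak{m}})$. Both assertions of the corollary should then reduce to showing that the extra terms $t\eta_{\mathfrak{h}}$ and $t\eta_{\mathfrak{m}}$ contribute nothing to the relevant invariants, because $\eta^{mid}$ takes values in $f\wedge f^{\perp}$ and hence $\eta_{\mathfrak{h}},\eta_{\mathfrak{m}}$ annihilate $f^{\perp}$ and map $f$ into $f$.

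First I would treat the Lie-invariant metric. By definition~(\ref{eqn:liemetric}), $g^{L}$ is built from $\mathcal{N}$ applied to sections $\xi_{1},\xi_{2}\in\Gamma f$, with the result read in $\wedge^{2}f$ after trivialising $\wedge^{2}(f^{\perp}/f)$. For $f^{t}$ one has $f^{t}=T(t)f$ and $\mathcal{N}^{t}=T(t)\cdot(\mathcal{N}+t\eta_{\mathfrak{m}})$. Since $\eta^{mid}\in\Omega^{1}(f\wedge f^{\perp})$, Proposition~\ref{prop:mceqn} (really just $\eta^{mid}f\le\Omega^{1}(f)$) gives $\eta_{\mathfrak{m}}(X)\xi_{i}\in\Gamma f$ for $\xi_i\in\Gamma f$; wedging two elements of $f$ inside the ambient $\wedge^{2}$ and projecting to $\wedge^{2}(f^{\perp}/f)$ kills them, so $\eta_{\mathfrak{m}}$ makes no contribution to $g^{L,t}$. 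Therefore $g^{L,t}(X,Y)(T(t)\cdot\xi_1)\wedge(T(t)\cdot\xi_2)$ equals $g^{L}(X,Y)\,\xi_1\wedge\xi_2$ after the isometric identification induced by $T(t)$, i.e. $g^{L}$ is preserved. (One should note that $T(t)$ is orthogonal, so the induced metrics on $f^{t\perp}/f^t$ match those on $f^{\perp}/f$ and the trivialisation is respected.)

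Next the Darboux cubic form, defined by $\mathcal{C}(X,Y,Z)\xi_1\wedge\xi_2 = (\mathcal{D}_X\mathcal{D}_Y\xi_1,\mathcal{N}_Z\xi_2) - (\mathcal{D}_X\mathcal{D}_Y\xi_2,\mathcal{N}_Z\xi_1)$. For $f^t$, substitute $\xi_i^t = T(t)\xi_i$, $\mathcal{D}^t = T(t)\cdot(\mathcal{D}+t\eta_{\mathfrak{h}})$, $\mathcal{N}^t = T(t)\cdot(\mathcal{N}+t\eta_{\mathfrak{m}})$, and use that $T(t)$ is an isometry to pull the inner products back. The term $\mathcal{N}^t_Z\xi_i^t$ becomes $T(t)\cdot(\mathcal{N}_Z\xi_i + t\eta_{\mathfrak{m}}(Z)\xi_i)$; again $\eta_{\mathfrak{m}}(Z)\xi_i\in\Gamma f$. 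Meanwhile $\mathcal{D}^t_X\mathcal{D}^t_Y\xi_i^t = T(t)\cdot(\mathcal{D}+t\eta_{\mathfrak{h}})_X(\mathcal{D}+t\eta_{\mathfrak{h}})_Y\xi_i$; expanding, every correction term again involves $\eta_{\mathfrak{h}}$ applied to a section of $f$, hence lands in $f$, and also $\mathcal{D}+t\eta_{\mathfrak{h}}$ preserves $f^{\perp}$ so the second derivative still lies in $f^{\perp}$. Pairing an element of $f^{\perp}$ with an element of $f$ against the pairing of $f$ with $f^{\perp}$: the only surviving cross-terms are those where one factor is in $f$ and the other in $f^{\perp}$, and since $(\mathcal{D}_X\mathcal{D}_Y\xi_i \bmod f^{\perp})$ is what enters and $\mathcal{N}_Z\xi_j\in\Gamma f$ by Lemma~\ref{lem:N}, a bookkeeping of which corrections pair nontrivially against $f$ versus $f^{\perp}$ shows all $t$-dependent terms cancel or vanish. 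This yields $\mathcal{C}^t(X,Y,Z)(T(t)\cdot\tau) = \mathcal{C}(X,Y,Z)\tau$.

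The main obstacle is the careful accounting in the cubic-form computation: unlike $g^{L}$, which only sees $\mathcal{N}$ once per slot and collapses immediately, $\mathcal{C}$ involves $\mathcal{D}_X\mathcal{D}_Y$, so the correction $t\eta_{\mathfrak{h}}$ enters quadratically and one must check that the pairing $(\,\cdot\,,\,\cdot\,)$ against $\mathcal{N}_Z\xi_j$ annihilates precisely the extra terms — using that $\eta_{\mathfrak{h}}$ has values in $(S_1\wedge S_1)\oplus(S_2\wedge S_2)$ while $\mathcal{N}$, $\eta_{\mathfrak{m}}$ have values in $S_1\wedge S_2$, together with $\eta^{mid}f\le\Omega^1(f)$ and $\mathcal{N}f\le\Omega^1(f)$. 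Once the right orthogonality relations are marshalled this is a routine but somewhat delicate verification, which is presumably why the authors relegate it to ``one can deduce the remainder.''
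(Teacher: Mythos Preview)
Your approach is correct and is exactly what the paper intends: the corollary is stated without proof because it follows directly from Lemma~\ref{lem:calcyc} in the manner you describe. However, you are making the verification harder than necessary.

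The simplification you miss is that $\eta^{mid}$ (and hence both $\eta_{\mathfrak{h}}$ and $\eta_{\mathfrak{m}}$) \emph{annihilate} $f$, not merely send $f$ to $f$: for $a\in f$, $b\in f^{\perp}$, $c\in f$ one has $(a\wedge b)c=(a,c)b-(b,c)a=0$, so $(f\wedge f^{\perp})f=0$. Consequently:
\begin{itemize}
\item For $g^{L}$: $\mathcal{N}^{t}_{X}(T(t)\xi) = T(t)(\mathcal{N}_{X}\xi + t\,\eta_{\mathfrak{m}}(X)\xi) = T(t)\mathcal{N}_{X}\xi$ on the nose, and the result is immediate since $T(t)$ is orthogonal.
\item For $\mathcal{C}$: since $\eta_{\mathfrak{h}}(Y)\xi_{i}=0$, one has $(\mathcal{D}+t\eta_{\mathfrak{h}})_{Y}\xi_{i}=\mathcal{D}_{Y}\xi_{i}$, so the feared quadratic term in $t$ never appears. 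The only correction is the single linear term $t\,\eta_{\mathfrak{h}}(X)\mathcal{D}_{Y}\xi_{i}$. Now $\mathcal{D}_{Y}\xi_{i}=d_{Y}\xi_{i}-\mathcal{N}_{Y}\xi_{i}\in\Gamma f^{\perp}$ (contact condition plus Lemma~\ref{lem:N}), and $f\wedge f^{\perp}$ maps $f^{\perp}$ into $f$, so this correction lies in $\Gamma f$. It therefore pairs to zero against $\mathcal{N}_{Z}\xi_{j}\in\Gamma f$ (Lemma~\ref{lem:N} again), $f$ being isotropic.
\end{itemize}
Thus the ``somewhat delicate verification'' you anticipate collapses to two lines once you use $\eta^{mid}f=0$ rather than the weaker $\eta^{mid}f\le\Omega^{1}(f)$.
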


\begin{corollary}
The middle potential of $f^{t}$ is $(\eta^{t})^{mid} = Ad_{T(t)}\cdot \eta^{mid}$. 
\end{corollary}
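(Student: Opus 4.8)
~\textbf{Proof proposal.}
The goal is to show that the middle potential of the Calapso transform $f^{t}$ is $(\eta^{t})^{mid} = Ad_{T(t)}\cdot \eta^{mid}$. The plan is to verify the two defining properties of the middle potential for the candidate $Ad_{T(t)}\cdot\eta^{mid}$: first that it is a closed $1$-form in the gauge orbit of $f^{t}$ taking values in $f^{t}\wedge (f^{t})^{\perp}$, and second that its $\mathfrak{m}$-component with respect to the Lie cyclide splitting of $f^{t}$ lies in $\Omega^{1}(\wedge^{2}f^{t})$. Uniqueness of the middle potential (established after Proposition~\ref{prop:midcon}) then forces equality.

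First I would recall from Theorem~\ref{thm:calform} that $\eta^{t} := Ad_{T(t)}\cdot\eta$ is a closed $1$-form with values in $f^{t}\wedge(f^{t})^{\perp}$ and with $[\eta^{t}\wedge\eta^{t}]=0$; applied to $\eta = \eta^{mid}$ this gives that the candidate is indeed a legitimate potential for the Lie applicable structure on $f^{t}$, so it only remains to identify it as the \emph{middle} one. For the second property, I would use Lemma~\ref{lem:calcyc}, which identifies the Lie cyclide splitting of $f^{t}$ as $S_{i}^{t} = T(t)S_{i}$, so that the induced splitting $\wedge^{2}\underline{\mathbb{R}}^{4,2} = \mathfrak{h}^{t}\oplus\mathfrak{m}^{t}$ of $f^{t}$ is precisely the image under $Ad_{T(t)}$ of the splitting $\mathfrak{h}\oplus\mathfrak{m}$ of $f$. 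Consequently the $\mathfrak{m}^{t}$-component of $Ad_{T(t)}\cdot\eta^{mid}$ is $Ad_{T(t)}$ applied to the $\mathfrak{m}$-component of $\eta^{mid}$; since $\eta^{mid}$ is the middle potential of $f$, that $\mathfrak{m}$-component lies in $\Omega^{1}(\wedge^{2}f)$, and $Ad_{T(t)}$ carries $\wedge^{2}f$ to $\wedge^{2}f^{t}$ (because $T(t)f = f^{t}$). Hence $(Ad_{T(t)}\cdot\eta^{mid})_{\mathfrak{m}^{t}} \in \Omega^{1}(\wedge^{2}f^{t})$, which is exactly the defining condition for the middle potential of $f^{t}$.

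Combining these two observations with the uniqueness statement for the middle potential (from Proposition~\ref{prop:midcon} and the definition following it) yields $(\eta^{t})^{mid} = Ad_{T(t)}\cdot\eta^{mid}$, as claimed. I expect no serious obstacle here: the result is essentially a bookkeeping consequence of the compatibility between the Calapso gauge transformation and the Lie cyclide splitting recorded in Lemma~\ref{lem:calcyc}, together with the fact that $T(t)$ is an orthogonal transformation preserving $f$ up to $f^{t}$. The only point requiring a line of care is checking that $Ad_{T(t)}$ respects the direct sum decomposition $\mathfrak{h}\oplus\mathfrak{m}$ componentwise — but this is immediate from $S_{i}^{t} = T(t)S_{i}$ and the definitions $\mathfrak{h} = (S_{1}\wedge S_{1})\oplus(S_{2}\wedge S_{2})$, $\mathfrak{m} = S_{1}\wedge S_{2}$.
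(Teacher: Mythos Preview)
Your proposal is correct and follows essentially the same approach as the paper's proof: both use Lemma~\ref{lem:calcyc} to identify $\mathfrak{h}^{t}=Ad_{T(t)}\cdot\mathfrak{h}$ and $\mathfrak{m}^{t}=Ad_{T(t)}\cdot\mathfrak{m}$, deduce that the $\mathfrak{m}^{t}$-component of $Ad_{T(t)}\cdot\eta^{mid}$ is $Ad_{T(t)}\cdot\eta_{\mathfrak{m}}\in\Omega^{1}(\wedge^{2}f^{t})$, and conclude by the defining property of the middle potential. Your version is slightly more explicit in invoking Theorem~\ref{thm:calform} and the uniqueness statement, but the argument is the same.
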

\begin{proof}
In Section~\ref{sec:lieapp} we had a splitting $\underline{\mathfrak{o}(4,2)}=\mathfrak{h}+\mathfrak{m}$ induced by $f$, where
\[ \mathfrak{h} = (S_{1}\wedge S_{1})\oplus (S_{2}\wedge S_{2}) \quad \text{and}\quad \mathfrak{m}=S_{1}\wedge S_{2}. \]
By Lemma~\ref{lem:calcyc}, $f^{t}$ induces the splitting $\underline{\mathfrak{o}(4,2)}=\mathfrak{h}^{t}+\mathfrak{m}^{t}$, where 
\[ \mathfrak{h}^{t} = T(t)\cdot \mathfrak{h} \quad \text{and} \quad \mathfrak{m}^{t}=T(t)\cdot \mathfrak{m}.\]
We then split $\eta^{mid} = \eta_{\mathfrak{h}}+\eta_{\mathfrak{m}}$, where $\eta_{\mathfrak{h}}\in\Omega^{1}(\mathfrak{h})$ and $\eta_{\mathfrak{m}}\in\Omega^{1}(\mathfrak{m})$. Now splitting $Ad_{T(t)}\cdot \eta^{mid}$ with respect to the splitting induced by $f^{t}$ yields $Ad_{T(t)}\cdot \eta^{mid} = \eta^{t}_{\mathfrak{h}^{t}}+ \eta^{t}_{\mathfrak{m}^{t}}$ with 
\[ \eta^{t}_{\mathfrak{h}^{t}} = Ad_{T(t)}\cdot\eta_{\mathfrak{h}} \quad \text{and}\quad \eta^{t}_{\mathfrak{m}^{t}}  = Ad_{T(t)}\cdot\eta_{\mathfrak{m}}.\]
Since $\eta^{mid}$ is the middle potential, $\eta_{\mathfrak{m}} \in\Omega^{1}(f\wedge f)$. Hence, 
\[ \eta^{t}_{\mathfrak{m}^{t}}  = Ad_{T(t)}\cdot\eta_{\mathfrak{m}} \in\Omega^{1}(f^{t}\wedge f^{t}).\]
Therefore $Ad_{T(t)}\cdot \eta^{mid}$ is the middle potential of $f^{t}$. 
\end{proof}

\begin{proposition}
Suppose that $s$ is an isothermic sphere congruence of $f$ with isothermic gauge potential $\eta^{s}\in\Omega^{1}(s\wedge s^{\perp})$. Then $s^{t}:=T(t)s$ is an isothermic\footnote{$s^{t}$ is in fact the Calapso transform of the isothermic sphere congruence $s$, see~\cite{BDPP2011,H2003}.} sphere congruence of $f^{t}$ with isothermic gauge potential $(\eta^{t})^{s} := Ad_{T(t)}\cdot \eta^{s}$. 
\end{proposition}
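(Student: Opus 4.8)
The plan is to show that the two defining conditions of an isothermic gauge potential --- closedness and membership in $\Omega^{1}(s^{t}\wedge (s^{t})^{\perp})$ --- are preserved under the adjoint action of $T(t)$, exactly as was done for the 1-form $\eta$ in Theorem~\ref{thm:calform}. First I would verify the type condition: since $\eta^{s}\in\Omega^{1}(s\wedge s^{\perp})$ and $T(t)$ is an orthogonal transformation, conjugation sends $s\wedge s^{\perp}$ to $T(t)s\wedge (T(t)s)^{\perp}=s^{t}\wedge(s^{t})^{\perp}$, so $(\eta^{t})^{s}=Ad_{T(t)}\cdot\eta^{s}\in\Omega^{1}(s^{t}\wedge(s^{t})^{\perp})$ and is non-zero because $Ad_{T(t)}$ is invertible.

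Next I would establish closedness. The key identity is the defining relation $T(t)\cdot d^{t}=d$ of the trivialising gauge, equivalently $d\circ T(t)=T(t)\circ(d+t\eta)$ acting on sections; applied to a $\mathfrak{o}(4,2)$-valued form this gives $d(Ad_{T(t)}\cdot\omega)=T(t)\cdot d^{t}(Ad_{T(t)}\cdot\omega)=Ad_{T(t)}\cdot d^{t}\omega$ for any $\omega\in\Omega^{1}(\underline{\mathfrak{o}(4,2)})$, where $d^{t}\omega = d\omega + [t\eta\wedge\omega]$ denotes the exterior derivative twisted by $d^{t}$. Hence
\[
d\big((\eta^{t})^{s}\big) = Ad_{T(t)}\cdot\big(d\eta^{s}+t[\eta\wedge\eta^{s}]\big).
\]
So closedness of $(\eta^{t})^{s}$ reduces to showing $[\eta\wedge\eta^{s}]=0$, since $d\eta^{s}=0$ by hypothesis. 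This bracket vanishes because $\eta$ and $\eta^{s}$ take values in $f\wedge f^{\perp}$ and $s\wedge s^{\perp}$ respectively with $s\le f$: for a decomposable product $(a\wedge b)$ with $a,b\in f$ (from $\eta$, using $\eta f=0$ forces the output to lie in $f$, so effectively $\eta\in\Omega^1(f\wedge f)$ when paired against $f^\perp$... ) --- more carefully, one checks pointwise on the curvature subbundles $T_{i}$ as in Proposition~\ref{prop:mceqn}: $[\eta(X_{i}),\eta^{s}(X_{j})]=0$ for $i\ne j$ by the same curvature-sphere argument, and the $i=j$ terms drop out of the wedge, so $[\eta\wedge\eta^{s}]=0$. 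Alternatively, and more cleanly, I would argue that $\eta^{s}\in[\eta^{mid}]$ is impossible in general but $\eta+\eta^{s}$ or rather the flat connection $d^{t}$ restricted appropriately still annihilates the relevant data --- the cleanest route is simply the pointwise bracket computation on $T_{1}\oplus T_{2}$.

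The main obstacle I anticipate is the vanishing of $[\eta\wedge\eta^{s}]$: unlike in Theorem~\ref{thm:calform} where $[\eta\wedge\eta]=0$ is part of the Lie applicable hypothesis, here we need the mixed bracket of two \emph{different} 1-forms to vanish, and this requires knowing that $\eta$ and $\eta^{s}$ are ``compatible'' in the sense that both are adapted to the same curvature directions $T_{1},T_{2}$. This is true --- an isothermic sphere congruence enveloped by $f$ shares the curvature directions of $f$ --- but it should be stated explicitly, perhaps invoking the description of $\eta^{mid}$ in Equation~(\ref{eqn:midform}) and the fact that $s^{\pm}$ (hence any isothermic sphere congruence arising this way) is built from the $\sigma_{i}$. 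Once $[\eta\wedge\eta^{s}]=0$ is in hand, the proof is a one-line application of the gauge identity, entirely parallel to the closedness argument in Theorem~\ref{thm:calform}, and I would present it as such.
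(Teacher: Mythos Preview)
Your approach is correct but takes a longer route than the paper. The type-condition argument is identical to the paper's one-line proof. For closedness, your reduction to $[\eta\wedge\eta^{s}]=0$ is valid and the bracket does vanish: by Proposition~\ref{prop:mceqn} both $\eta$ and $\eta^{s}$ (each being a closed element of $\Omega^{1}(f\wedge f^{\perp})$) satisfy $\eta(T_{i}),\,\eta^{s}(T_{i})\le f\wedge f_{i}$, and since $f_{1}/f\perp f_{2}/f$ in $f^{\perp}/f$ one checks directly that $[f\wedge f_{1},\,f\wedge f_{2}]=0$, whence $[\eta(X_{1}),\eta^{s}(X_{2})]=0$ for $X_{i}\in\Gamma T_{i}$.

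The paper's proof, by contrast, treats closedness as already established and only records the type condition. The reason is that the isothermic potentials $\eta^{\pm}$ lie in the gauge orbit $[\eta^{mid}]$ --- they are obtained by gauging $\eta^{mid}$ by $\pm\epsilon\,\sigma_{1}\wedge\sigma_{2}$ --- so one may simply take $\eta=\eta^{s}$ from the outset; Theorem~\ref{thm:calform} then yields closedness of $Ad_{T(t)}\cdot\eta^{s}$ immediately, and the Calapso transform $f^{t}$ is unchanged by this choice of representative.

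Your remark that ``$\eta^{s}\in[\eta^{mid}]$ is impossible in general'' is the one genuine misstep in the proposal: in this context the isothermic sphere congruences are precisely $s^{\pm}$, whose potentials $\eta^{\pm}$ are by construction gauges of $\eta^{mid}$. Recognising this would have led you straight to the paper's shortcut and spared the bracket computation altogether.
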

\begin{proof}
From the orthogonality of $T(t)$ we have
\[ T(t)\cdot s\wedge s^{\perp} = s^{t}\wedge (s^{t})^{\perp}.\]
Hence, $(\eta^{t})^{s}\in \Omega^{1}(s^{t}\wedge (s^{t})^{\perp})$ and $s^{t}$ is isothermic.
\end{proof}

\subsection{Darboux transforms}
\label{subsec:darb}
Fix a non-zero $m\in\mathbb{R}$. Since $d^{m}=d+m\eta$ is a flat connection, it has many parallel sections. Suppose that $\hat{s}$ is a null rank 1 parallel subbundle of $d^{m}$ such that $\hat{s}$ is nowhere orthogonal to the curvature sphere congruences of $f$. Let $s_{0}: =  \hat{s}^{\perp}\cap f$ and let $\hat{f}:= s_{0}\oplus \hat{s}$. 

\begin{definition}
$\hat{f}$ is a Darboux transform of $f$ with parameter $m$. 
\end{definition}

Now for any section $\sigma_{0}\in\Gamma s_{0}$ and any parallel section $\hat{\sigma}\in\Gamma \hat{s}$ of $d^{m}$
\begin{equation} 
d\sigma_{0},\, d\hat{\sigma}\in \Omega^{1}((f+\hat{f})^{\perp}).\label{eqn:orthsec}
\end{equation}
It is then clear that $\hat{f}$ satisfies the contact condition. It remains to check the immersion condition of $\hat{f}$: let $p\in\Sigma$ and suppose that there exists $X\in T_{p}\Sigma$ such that $d_{X}\sigma_{0}\in \hat{f}(p)$ for some lift $\sigma_{0}\in\Gamma s_{0}$. Then as $d\sigma_{0}\in \Omega^{1}((f+\hat{f})^{\perp})$, we have that  $d_{X}\sigma_{0} \in s_{0}(p)$. Then it follows from the fact that $s_{0}$ is nowhere a curvature sphere of $f$ that $X=0$.  Therefore, $\hat{f}$ is a Legendre immersion. 

Recall from Section~\ref{sec:ribaucour} that we defined Ribaucour transforms of Legendre immersions. 
\begin{lemma}
$\hat{f}$ is a Ribaucour transform of $f$.
\end{lemma}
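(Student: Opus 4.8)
The plan is to use the characterisation of Ribaucour transforms from Corollary~\ref{cor:lflat}: it suffices to exhibit a rank $2$ subbundle $l\le f+\hat{f}$ with $l\cap s_{0}=\{0\}$ for which the induced connection $\mathcal{D}^{l}$ on $l$ is flat. Since $\hat{f}=s_{0}\oplus\hat{s}$ and $f\cap\hat{f}=s_{0}$, we have $f+\hat{f}=f\oplus\hat{s}$ as vector bundles, a rank $3$ degenerate bundle. The natural candidate is to pick a rank $1$ subbundle $s\le f$ complementary to $s_{0}$ inside $f$ and set $l:=s\oplus\hat{s}$; indeed any such $l$ meets $s_{0}$ trivially, and by the "some (and hence all)" clause of Corollary~\ref{cor:lflat} the choice is immaterial. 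So the real content is flatness of $\mathcal{D}^{l}$.

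First I would record what Equation~(\ref{eqn:orthsec}) and Lemma~\ref{lem:envperp} give us: for $\sigma_{0}\in\Gamma s_{0}$ and the parallel section $\hat{\sigma}\in\Gamma\hat{s}$ of $d^{m}$, both $d\sigma_{0}$ and $d\hat\sigma$ take values in $(f+\hat f)^{\perp}\le s_{0}^{\perp}$. Next I would compute $\mathcal{D}^{l}$ directly. Decompose the trivial connection as $d=\mathcal{D}^{l}+\mathcal{D}^{l^{\perp}}+\mathcal{N}^{l,l^{\perp}}$ as in Section~\ref{sec:ribaucour}. For $\hat\sigma\in\Gamma\hat{s}\le\Gamma l$, flatness of $d^{m}$ gives $d\hat\sigma=-m\,\eta(\cdot)\hat\sigma$; since $\eta\in\Omega^{1}(f\wedge f^{\perp})$ and $\hat s\perp f$ (as $\hat s\perp$ the curvature spheres, hence $\hat s\le s_1^\perp\cap s_2^\perp$, and actually $\hat\sigma\perp f$ follows because $\hat s$ nowhere orthogonal to curvature spheres forces $s_0=\hat s^\perp\cap f$ and one checks $\hat s\perp f$), the value $\eta(X)\hat\sigma$ lies in $f$, in fact in $s_0$ up to the relevant components. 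The key observation is that $d\hat\sigma$ has no $l^{\perp}$-component modulo what is needed, so that $\mathcal{D}^{l}\hat\sigma=\pi_{l}(d\hat\sigma)$ is essentially $-m\eta(\cdot)\hat\sigma\bmod l^\perp$, and more importantly $\hat s$ is a $\mathcal{D}^{l}$-parallel subbundle or close to it. Then I would invoke the Remark after Corollary~\ref{cor:lflat}: flatness of $\mathcal{D}^{l}$ on the rank $(1,1)$ bundle $l$ is equivalent to $s=l\cap f$ and $\hat s=l\cap\hat f$ being $\mathcal{D}^{l}$-parallel, and parallelism of one forces the other. So it reduces to showing $\hat s$ is $\mathcal{D}^{l}$-parallel, i.e., $\mathcal{D}^{l}\hat\sigma\in\Omega^{1}(\hat s)$.

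Thus the crux is: given $\hat\sigma$ parallel for $d^{m}=d+m\eta$, show $\pi_{l}(d\hat\sigma)\in\Omega^{1}(\hat s)$. Writing $d\hat\sigma=-m\eta(\cdot)\hat\sigma$ and using $\eta\in\Omega^1(f\wedge f^\perp)$ together with $\hat\sigma\perp f^\perp\supseteq f$... more carefully: $\eta=\sigma'\wedge\tau'$ locally with $\sigma'\in\Gamma f$, $\tau'\in\Gamma f^\perp$, so $(\sigma'\wedge\tau')\hat\sigma=(\sigma',\hat\sigma)\tau'-(\tau',\hat\sigma)\sigma'$; since $\hat\sigma\perp f$ the first term vanishes, giving $d\hat\sigma=m\sum(\tau'_i,\hat\sigma)\sigma'_i$, a section of $f$. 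But $d\hat\sigma\in\Omega^1((f+\hat f)^\perp)$ as well by~(\ref{eqn:orthsec}), and $f\cap(f+\hat f)^\perp=f\cap f^\perp\cap\hat s^\perp=f\cap\hat s^\perp=s_0$. Hence $d\hat\sigma\in\Omega^1(s_0)$. Then for $l=s\oplus\hat s$ with $s\le f$, $s\cap s_0=\{0\}$: the projection $\pi_l$ kills the $l^\perp$-part, and $s_0\le f=s\oplus s_0$; writing $d\hat\sigma=(\text{$s$-component})+(\text{$s_0$-component})$, note $s_0\le l^\perp$ iff... it need not be. So instead choose $l$ cleverly: take $s$ to be a complement with $s\perp s_0$ inside $f/(f\cap s_0^{\perp?})$ — but $f$ is degenerate. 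The robust route: since $d\hat\sigma\in\Omega^1(s_0)$ and $s_0\le\hat f$, we get $d\hat\sigma\in\Omega^1(f+\hat f)$, and more precisely $d\hat s(T\Sigma)\le s_0\le\hat f$; by the "hence all" clause I may choose $l$ containing... Actually simplest: $l:=\langle\hat\sigma,\sigma_0\rangle$ is rank $2$ but meets $s_0$ nontrivially — not allowed. So pick any $s\le f$, $s\cap s_0=\{0\}$; then $\mathcal{D}^l\hat\sigma=\pi_l(d\hat\sigma)$ where $d\hat\sigma\in\Omega^1(s_0)\subseteq\Omega^1(f)$; decompose $s_0$ against $l\oplus l^\perp$: the $l$-component of any $v\in s_0$ lies in $s_0\cap(l\text{-direction})$... one shows it lies in $\hat s$-direction by a contact/orthogonality argument, or directly: $\mathcal{D}^l\hat\sigma$ is $\langle.,.\rangle$-perpendicular to $\hat\sigma$ iff $(\mathcal{D}^l\hat\sigma,\hat\sigma)=(d\hat\sigma,\hat\sigma)=0$, which holds since $d\hat\sigma\in\Omega^1(s_0)$ and $s_0\perp\hat s$; as $l$ has signature $(1,1)$ and $\hat s$ is a null line in $l$, being perpendicular to $\hat\sigma$ means lying in $\hat s$. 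Hence $\mathcal{D}^l\hat\sigma\in\Omega^1(\hat s)$, so $\hat s$ is $\mathcal{D}^l$-parallel, $\mathcal{D}^l$ is flat, and by Corollary~\ref{cor:lflat} $f$ and $\hat f$ are Ribaucour transforms. I expect the only genuine obstacle to be bookkeeping the orthogonality relations $\hat s\perp f$ and $d\hat\sigma\in\Omega^1(s_0)$ cleanly; once those are in place the flatness is immediate from the null-line-in-$(1,1)$-plane argument.
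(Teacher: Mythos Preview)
Your overall strategy—use Corollary~\ref{cor:lflat} by choosing $l=s\oplus\hat s\le f+\hat f$ and proving $\mathcal{D}^{l}$ is flat—is fine, and is essentially equivalent to the paper's proof via the isometry $\psi$ of Proposition preceding Corollary~\ref{cor:lflat}. However, there are two genuine errors.

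\textbf{First, the orthogonality claim is backwards.} You assert $\hat s\perp f$, but the hypothesis is that $\hat s$ is \emph{nowhere} orthogonal to the curvature spheres, hence $\hat s\not\perp f$. Indeed, $s_{0}=\hat s^{\perp}\cap f$ is $1$-dimensional precisely because $\hat s\not\le f^{\perp}$. Consequently your computation $\eta(X)\hat\sigma\in f$ and the conclusion $d\hat\sigma\in\Omega^{1}(s_{0})$ are unjustified; in fact $d\hat\sigma$ need not take values in $f$ at all.

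\textbf{Second, and more seriously, your final step proves nothing.} You show $(\mathcal{D}^{l}\hat\sigma,\hat\sigma)=(d\hat\sigma,\hat\sigma)=0$ and conclude $\mathcal{D}^{l}\hat\sigma\in\Omega^{1}(\hat s)$, i.e.\ that $\hat s$ is a $\mathcal{D}^{l}$-invariant subbundle. But this is automatic: for \emph{any} metric connection on a rank-$2$ bundle of signature $(1,1)$, both null lines are invariant, simply because $(\hat\sigma,\hat\sigma)\equiv 0$ forces $(\mathcal{D}^{l}\hat\sigma,\hat\sigma)=0$. In a null frame $e_{1},e_{2}$ with $(e_{1},e_{2})=1$ one has $\mathcal{D}^{l}e_{1}=\alpha\,e_{1}$, $\mathcal{D}^{l}e_{2}=-\alpha\,e_{2}$, and the curvature is $d\alpha\otimes(e_{1}\wedge e_{2})$, which need not vanish. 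So ``$\hat s$ is a parallel subbundle $\Rightarrow$ $\mathcal{D}^{l}$ flat'' is false; your argument as written would show that any two Legendre immersions enveloping a common sphere congruence are Ribaucour transforms. The Remark you invoke should be read as asserting the existence of parallel \emph{sections}, not merely invariant subbundles.

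\textbf{The fix is short and uses only what you already quoted.} From~(\ref{eqn:orthsec}) you have $d\hat\sigma\in\Omega^{1}((f+\hat f)^{\perp})$. Since $l\le f+\hat f$, it follows that $(f+\hat f)^{\perp}\le l^{\perp}$, hence $\mathcal{D}^{l}\hat\sigma=0$: the $d^{m}$-parallel lift $\hat\sigma$ is a \emph{parallel section} of $\mathcal{D}^{l}$, not merely a section of an invariant line. Now $R^{\mathcal{D}^{l}}\hat\sigma=0$; since $R^{\mathcal{D}^{l}}\in\Omega^{2}(\wedge^{2}l)$ with $\wedge^{2}l$ one-dimensional and $(\sigma\wedge\hat\sigma)\hat\sigma=(\sigma,\hat\sigma)\hat\sigma\neq 0$, this forces $R^{\mathcal{D}^{l}}=0$. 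This is exactly the paper's argument, transported from $\mathcal{N}_{f,\hat f}=(f+\hat f)/s_{0}$ to $l$ via the isomorphism $\psi$.
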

\begin{proof}
By Equation~(\ref{eqn:orthsec}), for a parallel section $\hat{\sigma}\in\Gamma \hat{s}$ of $d^{m}$,
\[ d\hat{\sigma}\in \Omega^{1}((f+\hat{f})^{\perp}).\]
Therefore, $\hat{\sigma}\, \bmod\, s_{0}$ is a parallel section of the induced connection on $(f+\hat{f})/s_{0}$. Hence, this connection is flat.
\end{proof}
Suppose that $s\le f$ is a rank 1 subbundle of $f$ such that $s\cap s_{0}=\{0\}$ and define $l:=s\oplus\hat{s}$. Then $l$ defines a $(1,1)$-subbundle of $\underline{\mathbb{R}}^{4,2}$ and we have  the following splitting of $\underline{\mathbb{R}}^{4,2}$:
\[ \underline{\mathbb{R}}^{4,2} = l\oplus l^{\perp}.\]
We can then use this splitting to split the trivial connection $d$ on $\underline{\mathbb{R}}^{4,2}$ into 
\[ d = \mathcal{D}^{l,l^{\perp}}+\mathcal{N}^{l,l^{\perp}},\]
where $\mathcal{D}^{l,l^{\perp}}$ is the sum of the induced connections $\mathcal{D}^{l}$ and $\mathcal{D}^{l^{\perp}}$ on $l$ and $l^{\perp}$, respectively, and $\mathcal{N}^{l,l^{\perp}} \in \Omega^{1}(l\wedge l^{\perp})$. By Corollary~\ref{cor:lflat}, $\mathcal{D}^{l}$ is a flat connection on $l$ and if $\hat{\sigma}$ is a parallel section of $d^{m}$, then $\hat{\sigma}$ is a parallel section of $\mathcal{D}^{l}$. We may further split $\mathcal{N}^{l,l^{\perp}} =- \beta -\hat{\beta}$ where 
\[ \beta\in\Omega^{1}(\hat{s}\wedge l^{\perp})\quad \text{and} \quad \hat{\beta}\in\Omega^{1}(s\wedge l^{\perp}). \]
Moreover we may use our splitting to split $\eta = \eta_{0} + \eta_{s}$, where 
\[ \eta_{0}\in \Omega^{1}(s_{0}\wedge l^{\perp}) \quad \text{and} \quad \eta_{s}\in \Omega^{1}(s\wedge l^{\perp}).\] 

Recall from~\cite{BDPP2011,BS2012,C2012i,S2008} that for $v,w\in \mathcal{L}$ such that $(v,w)\neq 0$ and non-zero $t\in\mathbb{R}$ we have an orthogonal transformation  
\[ \Gamma^{v}_{w}(t)u = \left\{ \begin{array}{ll}
t\, u &\text{for $u=v$,}\\
\frac{1}{t}\, u &\text{for $u=w$,}\\
u &\text{for $u\in\langle v,w\rangle^{\perp}$.}
\end{array}\right.\]
We are now in a position to state the following proposition:
\begin{proposition}
\label{prop:hateta}
There exists a closed 1-form $\hat{\eta}\in\Omega^{1}(\hat{f}\wedge \hat{f}^{\perp})$ with $[\hat{\eta}\wedge \hat{\eta}]=0$ such that 
\[ d+t\hat{\eta} =\Gamma^{\hat{s}}_{s}(1-t/m) \cdot (d+t\eta).\]
Furthermore, $s$ is a parallel subbundle of $d+m\hat{\eta}$ and the quadratic differential $\hat{q}$ of $\hat{\eta}$ coincides with $q$.
\end{proposition}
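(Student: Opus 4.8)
The plan is to construct $\hat\eta$ explicitly from the splitting $\underline{\mathbb{R}}^{4,2} = l\oplus l^{\perp}$ and verify each claimed property in turn. First I would write the gauge transformation $g(t):=\Gamma^{\hat s}_{s}(1-t/m)$ and observe that, since $\hat s$ and $s$ are both null rank-1 subbundles of $l$ with $l\cap s_{0}=\{0\}$, conjugation by $g(t)$ acts as the identity on $l^{\perp}$, scales the $\hat s$-component, and rescales the $s$-component reciprocally. Applying $g(t)$ to the flat connection $d+t\eta$ and decomposing $\eta = \eta_{0}+\eta_{s}$ (with $\eta_{0}\in\Omega^{1}(s_{0}\wedge l^{\perp})$, $\eta_{s}\in\Omega^{1}(s\wedge l^{\perp})$) together with the decomposition $d=\mathcal{D}^{l,l^{\perp}}-\beta-\hat\beta$, I would track where each summand lands. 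The key point is that $\eta f \equiv 0$ forces $\eta$ to annihilate $s_{0}$ in a way compatible with the grading, so that the transformed connection is again of the form $d+t\hat\eta$ for a 1-form $\hat\eta$ taking values in $\hat f\wedge\hat f^{\perp}$; this is essentially the same bookkeeping as in the isothermic case of \cite{BDPP2011,H2003,C2012i}. I expect $\hat\eta$ to come out as $\hat\eta = \eta_{0} + (\text{a multiple of }\tfrac1m)\,\hat\beta$ or similar, i.e. the part of $\eta$ supported away from $\hat s$ plus a correction term built from $\hat\beta$ that lives in $\hat s\wedge l^{\perp}\subset\hat f\wedge\hat f^{\perp}$.

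Having produced $\hat\eta$ this way, closedness and $[\hat\eta\wedge\hat\eta]=0$ would follow for free: the right-hand side $g(t)\cdot(d+t\eta)$ is a gauge transform of a flat connection, hence flat for every $t$, and expanding $R^{d+t\hat\eta}=t\,d\hat\eta + \tfrac{t^2}{2}[\hat\eta\wedge\hat\eta]=0$ as an identity in $t$ forces both $d\hat\eta=0$ and $[\hat\eta\wedge\hat\eta]=0$ separately (comparing coefficients of $t$ and $t^2$). That $\hat\eta$ genuinely takes values in $\hat f\wedge\hat f^{\perp}$ rather than merely in $\wedge^{2}\underline{\mathbb{R}}^{4,2}$ is the content I would need to check by hand, using $\hat f = s_{0}\oplus\hat s$, Lemma~\ref{lem:envperp}, and the fact that $\hat s$ is $d^{m}$-parallel so that $d\hat\sigma\in\Omega^{1}((f+\hat f)^{\perp})$ from \eqref{eqn:orthsec}.

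For the statement that $s$ is a parallel subbundle of $d+m\hat\eta$: setting $t=m$ in the connection identity gives $d+m\hat\eta = \Gamma^{\hat s}_{s}(0)\cdot(d+m\eta)$ — but $\Gamma^{\hat s}_{s}(0)$ is singular, so instead I would argue at $t=m$ directly. Since $\hat s$ is parallel for $d+m\eta$ and the roles of $s$ and $\hat s$ are interchanged under the construction (swapping $f\leftrightarrow\hat f$, $\eta\leftrightarrow\hat\eta$, and using $\Gamma^{s}_{\hat s}(\cdot)=\Gamma^{\hat s}_{s}(\cdot)^{-1}$ with reciprocal parameter), the symmetry $(1-t/m)\leftrightarrow(1-t/m)^{-1}$ will show that $s$ plays for $d+m\hat\eta$ the role $\hat s$ played for $d+m\eta$; concretely, one checks $d^{m}_{X}\sigma = (\text{section of }s)$ for $\sigma\in\Gamma s$ by feeding a parallel section of $\mathcal{D}^{l}$ through the splitting $\mathcal{N}^{l,l^{\perp}}=-\beta-\hat\beta$. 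The main obstacle I anticipate is precisely this degenerate-parameter point: the clean conjugation formula breaks down at $t=m$, so the parallelism of $s$ must be extracted either by a limiting argument or, more cleanly, by exploiting the built-in symmetry of the Darboux construction under interchanging $f$ and $\hat f$ (which also immediately hands back $\hat q = q$, since $q$ is symmetric in the two surfaces by the corollary to Lemma~\ref{lem:trivq} — or, failing that, $\hat q = q$ follows by the same trace computation as in Theorem~\ref{thm:calform}, noting $\Gamma^{\hat s}_{s}(1-t/m)$ acts trivially modulo $\hat f$ on the relevant endomorphism). Finally $\hat q = q$ can also be seen directly: $\hat q(X,Y) = \mathrm{tr}(\hat\sigma\mapsto\hat\eta(X)d_{Y}\hat\sigma)$ and, because $\hat\eta$ and $\eta$ agree modulo terms that are killed upon taking this trace against $\hat f^{\perp}$, the two traces coincide.
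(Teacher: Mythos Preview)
Your outline matches the paper's approach: decompose via $l\oplus l^{\perp}$, compute the gauge transform explicitly, read off closedness and $[\hat\eta\wedge\hat\eta]=0$ from flatness in $t$, then check parallelism of $s$ and $\hat q=q$ directly. Two places need tightening.

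First, the correction term is $(1/m)\beta$, not $\hat\beta$ (recall $\beta\in\Omega^{1}(\hat s\wedge l^{\perp})$ while $\hat\beta\in\Omega^{1}(s\wedge l^{\perp})$). The key preliminary step you omit is that $d\hat\sigma=-m\eta\hat\sigma$ forces $\hat\beta=m\eta_{s}$, so that $-\hat\beta+t\eta_{s}=-(1-t/m)\hat\beta$ is exactly what $\Gamma^{\hat s}_{s}(1-t/m)$ rescales correctly. Once you have $\hat\eta=\eta_{0}+(1/m)\beta$ explicitly, your detour through the singularity at $t=m$ is unnecessary: take $\sigma\in\Gamma s$ parallel for $\mathcal{D}^{l}$, so $d\sigma=-\beta\sigma$, and then $(d+m\hat\eta)\sigma=-\beta\sigma+\beta\sigma=0$ directly. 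No limit or symmetry argument is needed.

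Second, your sketch for $\hat q=q$ is too loose. The claim that $\hat\eta$ and $\eta$ ``agree modulo terms killed upon taking the trace'' is not what happens, and the appeal to symmetry of the Darboux pair is circular here. The paper compares matrix entries in the bases $\{\sigma_{0},\sigma\}$ for $f$ and $\{\sigma_{0},\hat\sigma\}$ for $\hat f$: the $s_{0}$-diagonal entries agree because $\eta_{0}=\hat\eta_{0}$, while the remaining diagonal entry requires showing $-(\hat\eta(X)d_{Y}\hat\sigma,\sigma)=-(\hat\sigma,\eta(Y)d_{X}\sigma)$, which uses $\hat\beta=m\eta_{s}$, $\beta=m\eta_{\hat s}$, and the skew-symmetry of $\beta,\hat\beta$. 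Note the swap $X\leftrightarrow Y$ in that identity; it is harmless only because $q$ is already known to be symmetric.
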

\begin{proof}
The first part of this theorem was proved by Clarke \cite[Theorem 4.3.7]{C2012i} and is analogous to \cite[Proposition 3.11]{BDPP2011}. For the purpose of proving the latter part of this theorem, we shall repeat the arguments of those proofs here. 

Firstly, for a parallel section $\hat{\sigma}\in\Gamma \hat{s}$ of $d^{m}$, we have that $d\hat{\sigma} = -m\eta\hat{\sigma}$. Therefore $-\hat{\beta}\hat{\sigma} = -m \eta_{s}\hat{\sigma}$. This implies that $\hat{\beta} = m\eta_{s}$. 
Now we may write 
\[ d+ t\eta = \mathcal{D}^{l,l^{\perp}} - \beta - \hat{\beta} + t\eta_{0} + t\eta_{s}.\]
Therefore, 
\begin{align*}
\Gamma^{\hat{s}}_{s}(1-t/m) \cdot(d+t\eta) &= \Gamma^{\hat{s}}_{s}(1-t/m)\cdot (\mathcal{D}^{l,l^{\perp}} -\beta- (1 - t/m)\hat{\beta} + t \eta_{0} )\\
&= \mathcal{D}^{l,l^{\perp}} - (1-t/m)\beta - (1-t/m)/(1-t/m)\hat{\beta} + t\eta_{0} \\
&= \mathcal{D}^{l,l^{\perp}} - \hat{\beta} - \beta + t(\eta_{0} + (1/m) \beta).
\end{align*}
Then letting $\eta_{\hat{s}}:= (1/m) \beta$ and $\hat{\eta}:= \eta_{0}+\eta_{\hat{s}}\in \Omega^{1}(\hat{f}\wedge \hat{f}^{\perp})$, we have that 
\[ d+t\hat{\eta} =\Gamma^{\hat{s}}_{s}(1-t/m) \cdot (d+t\eta).\]
Since $d+t\eta$ is a 1-parameter family of flat connections, we must have that $d+t\hat{\eta}$ is a 1-parameter family of flat connections. The curvature of this family is given by 
\[ R^{d+t\hat{\eta}} = td\hat{\eta} + \frac{t^{2}}{2}[\hat{\eta}\wedge \hat{\eta}].\]
Thus, $\hat{\eta}$ is closed and $[\hat{\eta}\wedge \hat{\eta}]=0$. 

Suppose that $\sigma\in\Gamma s$ is a parallel section of $\mathcal{D}^{l}$. Then $d\sigma= -\beta\sigma$ and
\[ (d+m\hat{\eta})\sigma = -\beta\sigma + m(1/m)\beta \sigma =0.\]
Hence, $s$ is a parallel subbundle of $d+m\hat{\eta}$. 

We shall now show that the quadratic forms of $\hat{\eta}$ and $\eta$ coincide: let $\sigma_{0}\in \Gamma s_{0}$ and assume that $(\sigma,\hat{\sigma})=-1$. Now, $\{\sigma_{0},\sigma\}$ is a basis for $f$ and $\{\sigma_{0},\hat{\sigma}\}$ is a basis for $\hat{f}$. Since $\hat{\eta}= \eta_{0} +\eta_{\hat{s}}$ and $\eta=\eta_{0}+\eta_{s}$, we have that, for $X,Y\in\Gamma T\Sigma$,
\[ [\hat{\eta}(X)d_{Y}\sigma_{0}]_{s_{0}} = [\eta_{0}(X)d_{Y}\sigma_{0}]_{s_{0}}=[\eta(X)d_{Y}\sigma_{0}]_{s_{0}}.\]
Therefore, with respect to our bases defined above, the $s_{0}$ component of $\hat{\eta}(X)d_{Y}\sigma_{0}$ coincides with the $s_{0}$ component of $\eta(X)d_{Y}\sigma_{0}$.
Furthermore, the $\hat{s}$ component of $\hat{\eta}(X)d_{Y}\hat{\sigma}$ is given by 
\begin{align*}
-(\hat{\eta}(X)d_{Y}\hat{\sigma},\sigma) &= (1/m)(\beta(X)\hat{\beta}(Y)\hat{\sigma},\sigma)\\
&= 
(1/m)(\hat{\sigma},\hat{\beta}(Y)\beta(X)\sigma) =  -(\hat{\sigma},\eta(Y)d_{X}\sigma),  
\end{align*}
by the skew-symmetry of $\beta$ and $\hat{\beta}$. Therefore, the $\hat{s}$ component of $\hat{\eta}(X)d_{Y}\hat{\sigma}$ coincides with the $s$ component of $\eta(X)d_{Y}\sigma$. It follows then that 
\[ q(X,Y) = tr(\nu\mapsto \eta(X)d_{Y}\nu) \quad \text{and}\quad \hat{q}(X,Y) = tr(\hat{\nu}\mapsto \hat{\eta}(X)d_{Y}\hat{\nu})\]
are equal. 
\end{proof}

As a corollary to Proposition~\ref{prop:hateta} we have the following theorem:

\begin{theorem}[{\cite[Theorem 4.3.7, Proposition 4.3.8]{C2012i}}]
$\hat{f}$ is a Lie-applicable surface and $f$ is a Darboux transform of $\hat{f}$ with parameter $m$. 
\end{theorem}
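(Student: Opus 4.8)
The plan is to deduce both assertions directly from Proposition~\ref{prop:hateta}, which has done essentially all the work. For the first assertion, that $\hat{f}$ is Lie applicable, I would note that Proposition~\ref{prop:hateta} already produces a closed $\hat{\eta}\in\Omega^{1}(\hat{f}\wedge\hat{f}^{\perp})$ with $[\hat{\eta}\wedge\hat{\eta}]=0$ and $\hat{q}=q$. Since $f$ is Lie applicable, $q$ is non-zero, hence so is $\hat{q}$, and Definition~\ref{def:lieapp} is satisfied. One should also record that the $\Omega$-versus-$\Omega_{0}$ dichotomy is preserved: $\hat{q}=q$ is non-degenerate (resp. degenerate) on a dense open set exactly when $q$ is, so a Darboux transform of an $\Omega$-surface is an $\Omega$-surface and likewise for $\Omega_{0}$-surfaces.

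For the second assertion, that $f$ is a Darboux transform of $\hat{f}$ with the same parameter $m$, I would unwind the definition: I must exhibit $f$ as $s_{0}\oplus s$ where $s_{0}=s^{\perp}\cap\hat{f}$ for some null rank $1$ parallel subbundle $s$ of $d+m\hat{\eta}$ that is nowhere orthogonal to the curvature sphere congruences of $\hat{f}$. The natural candidate is the subbundle $s\le f$ featuring in Proposition~\ref{prop:hateta}: it is shown there to be a parallel subbundle of $d+m\hat{\eta}$. It is null and rank $1$ by construction (it is a curvature-sphere-transverse line in $f$, and in fact for the Darboux transform to make sense one takes $s$ null). Then $s^{\perp}\cap\hat{f}$: since $\hat{f}=s_{0}\oplus\hat{s}$ and $s\perp s_{0}$ (both lie in $f+\hat{f}$ with the orthogonality relations from Lemma~\ref{lem:envperp}, or more directly $s_{0}=\hat{s}^{\perp}\cap f\le s^{\perp}$ once one checks $s\perp s_{0}$) while $s$ is nowhere orthogonal to $\hat{s}$ (as $\hat{s}$ is nowhere orthogonal to the curvature spheres of $f$ and $s$ is a curvature-sphere-transverse line — here one may simply normalise $(\sigma,\hat{\sigma})=-1$ as in the proof of Proposition~\ref{prop:hateta}), we get $s^{\perp}\cap\hat{f}=s_{0}$, whence $s_{0}\oplus s=f$.

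The remaining point is the transversality hypothesis: $s$ must be nowhere orthogonal to the curvature sphere congruences $\hat{s}_{1},\hat{s}_{2}$ of $\hat{f}$. Since $\hat{f}$ is a Ribaucour transform of $f$, the curvature subbundles of $\hat{f}$ coincide with those of $f$, and one can identify $\hat{s}_{i}=l_{i}\cap\hat{f}$ in the notation of Subsection~\ref{sec:ribaucour}; orthogonality of $s$ to $\hat{s}_{i}$ at a point would force $s$ into $s_{i}^{\perp}$ there, contradicting the choice of $s$ as transverse to the curvature spheres of $f$ — strictly one needs $s$ itself to have been chosen transverse, which is part of the data of the original Darboux transform setup and may simply be assumed (or the statement restricted to generic $s$). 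Finally, applying Proposition~\ref{prop:hateta} again with the roles of $(f,\eta,m)$ and $(\hat{f},\hat{\eta},m)$ interchanged — using $d+t\eta=\Gamma^{s}_{\hat{s}}(1-t/m)\cdot(d+t\hat{\eta})$, which follows by inverting the gauge relation since $\Gamma^{\hat{s}}_{s}(1-t/m)^{-1}=\Gamma^{s}_{\hat{s}}(1-t/m)$ — confirms that $f$ is precisely the Darboux transform of $\hat{f}$ associated to the parallel subbundle $s$ with parameter $m$. The main obstacle is purely bookkeeping: checking that the transversality conditions survive (equivalently, that $s_{0}$ is nowhere a curvature sphere of $\hat{f}$), since everything else is an immediate rereading of Proposition~\ref{prop:hateta} and its gauge identity.
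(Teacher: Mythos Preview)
Your approach matches the paper's exactly: the paper states the theorem simply as a corollary to Proposition~\ref{prop:hateta} with no further argument, and your elaboration unpacks precisely what is implicit there. The Lie-applicability of $\hat{f}$ and the identification of $s$ as the relevant parallel subbundle of $d+m\hat{\eta}$ are handled correctly.

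There is, however, a genuine gap in your transversality argument. You claim that $s\perp\hat{s}_{i}$ at a point would ``force $s$ into $s_{i}^{\perp}$ there, contradicting the choice of $s$ as transverse to the curvature spheres of $f$''. But $f$ is isotropic, so $s\le f$ is \emph{always} orthogonal to $s_{i}\le f$; the implication $s\subset s_{i}^{\perp}$ is vacuous and yields no contradiction. Moreover, ``transverse'' in this context means $s\cap s_{i}=\{0\}$, not non-orthogonality, and in any case $s$ is an arbitrary complement of $s_{0}$ in $f$, so it may well coincide with a curvature sphere of $f$ at isolated points.

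The correct argument runs as follows. The condition $s\perp\hat{s}_{i}(p)$ is equivalent to $\hat{s}_{i}(p)\le s^{\perp}\cap\hat{f}(p)=s_{0}(p)$, i.e., $s_{0}(p)$ is a curvature sphere of $\hat{f}$. Let $X\in T_{p}\Sigma$ lie in the corresponding curvature space, so that $d_{X}\sigma_{0}\in\hat{f}(p)$ for $\sigma_{0}\in\Gamma s_{0}$. By~(\ref{eqn:orthsec}) one also has $d_{X}\sigma_{0}\in(f+\hat{f})^{\perp}(p)$, whence $d_{X}\sigma_{0}\in\hat{f}(p)\cap(f+\hat{f})^{\perp}(p)=s_{0}(p)\le f(p)$. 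Thus $s_{0}(p)$ would be a curvature sphere of $f$ at $p$ with the same curvature direction, contradicting the standing hypothesis that $\hat{s}$ (equivalently $s_{0}$) is nowhere orthogonal to the curvature spheres of $f$. This closes the gap and requires no additional assumption on $s$.
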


An obvious question to ask is what happens if we use a different gauge $\tilde{\eta}=\eta -d\tau$ to compute our Darboux transforms. However, by Lemma~\ref{lem:flatgorb}, $\exp(m\tau)\hat{s}\le \hat{f}$ is a parallel subbundle of $d+m\tilde{\eta}$. Hence, we obtain the same Darboux transforms. 

Now $\exp(\wedge^{2}f)$ acts transitively on $\hat{f}\backslash s_{0}$ and, analogously, $\exp(\wedge^{2}\hat{f})$ acts transitively on $f\backslash s_{0}$. Thus, given $s'\le f$ and $\hat{s}'\le \hat{f}$ such that $s'\cap s_{0} = \{0\} = \hat{s}'\cap s_{0}$, there exists $\tau\in \Gamma (\wedge^{2}f)$ and $\hat{\tau}\in \Gamma(\wedge^{2}\hat{f})$ such that 
\[ s' = \exp(m\hat{\tau})s \quad \text{and}\quad \hat{s}' = \exp(m\tau)\hat{s}.\]
By letting $\eta':= \eta - d\tau$ and $\hat{\eta}':=\hat{\eta} -d\hat{\tau}$ we have that $s$ is a parallel subbundle of $d+m\hat{\eta}'$ and $\hat{s}$ is a parallel subbundle of $d+m\eta'$. We therefore have the following proposition:

\begin{proposition}
Suppose that $\hat{f}$ is a Darboux transform of $f$ with parameter $m$ and let $l$ be any rank 2 subbundle of $f+\hat{f}$ with $l\cap s_{0}=\{0\}$. Then there exist gauge potentials $\eta\in\Omega^{1}(f\wedge f^{\perp})$ and $\hat{\eta}\in\Omega^{1}(\hat{f}\wedge \hat{f}^{\perp})$ such that $s:=f\cap l$ is a parallel subbundle of $d+m\hat{\eta}$ and $\hat{s}:=\hat{f}\cap l$ is a parallel subbundle of $d+m\eta$. 
\end{proposition}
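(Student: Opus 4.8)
The plan is to reduce the assertion to the transitivity of the actions of $\exp\bigl(\Gamma(\wedge^{2}f)\bigr)$ and $\exp\bigl(\Gamma(\wedge^{2}\hat{f})\bigr)$ already noted, combined with Lemma~\ref{lem:flatgorb} and Proposition~\ref{prop:hateta}. First I would record an elementary dimension count: since $f\cap\hat{f}=s_{0}$ has rank $1$ while $f$ and $\hat{f}$ are pointwise distinct rank $2$ subbundles, $f+\hat{f}$ has rank $3$, so any rank $2$ subbundle $l\le f+\hat{f}$ with $l\cap s_{0}=\{0\}$ meets each of $f$ and $\hat{f}$ in a rank $1$ subbundle (a rank $2$ intersection would force $l\supseteq s_{0}$), and since $s\cap\hat{s}\le f\cap\hat{f}=s_{0}$ we get $l=s\oplus\hat{s}$ with $s:=f\cap l$, $\hat{s}:=\hat{f}\cap l$, both complementary to $s_{0}$. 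This shows $s$ and $\hat{s}$ are admissible data for the constructions of Subsection~\ref{subsec:darb}.

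Next I would fix reference data. Unwinding the definition of Darboux transform, $\hat{f}$ comes with a gauge potential $\eta_{*}\in\Omega^{1}(f\wedge f^{\perp})$ of $f$ and a null rank $1$ subbundle $\hat{s}_{*}\le\hat{f}$, complementary to $s_{0}$ and nowhere orthogonal to the curvature spheres of $f$, that is parallel for $d+m\eta_{*}$. Choosing any rank $1$ subbundle $s_{*}\le f$ with $s_{*}\cap s_{0}=\{0\}$ and applying Proposition~\ref{prop:hateta} to $l_{*}:=s_{*}\oplus\hat{s}_{*}$ produces a closed $\hat{\eta}_{*}\in\Omega^{1}(\hat{f}\wedge\hat{f}^{\perp})$ for which $s_{*}$ is parallel for $d+m\hat{\eta}_{*}$.

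The main step is the transitivity claim: $\exp\bigl(\Gamma(\wedge^{2}f)\bigr)$ acts transitively on the rank $1$ subbundles of $\hat{f}$ complementary to $s_{0}$, and symmetrically with $f$ and $\hat{f}$ exchanged. I would prove it by direct computation: $\wedge^{2}f$ is the line bundle spanned by $\sigma_{0}\wedge\sigma_{*}$ for $\sigma_{0}\in\Gamma s_{0}$, $\sigma_{*}\in\Gamma s_{*}$, this endomorphism maps into the isotropic bundle $f$, which it annihilates, so $\exp(\lambda\,\sigma_{0}\wedge\sigma_{*})=\mathrm{id}+\lambda\,\sigma_{0}\wedge\sigma_{*}$, and using $(\sigma_{0},\hat{\sigma}_{*})=0$ one gets, for $\hat{\sigma}_{*}\in\Gamma\hat{s}_{*}$,
\[ \exp(\lambda\,\sigma_{0}\wedge\sigma_{*})\,\hat{\sigma}_{*}=\hat{\sigma}_{*}-\lambda\,(\sigma_{*},\hat{\sigma}_{*})\,\sigma_{0}. \]
The coefficient $(\sigma_{*},\hat{\sigma}_{*})$ is nowhere zero precisely because $\hat{s}_{*}$ is nowhere orthogonal to the curvature spheres, which span $f$; this is the one point where the defining hypothesis on Darboux data is used, and it is the delicate part of the argument. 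Choosing $\lambda$ suitably and absorbing the constant $m$ yields $\tau\in\Gamma(\wedge^{2}f)$ with $\hat{s}=\exp(m\tau)\hat{s}_{*}$, and the mirror computation gives $\hat{\tau}\in\Gamma(\wedge^{2}\hat{f})$ with $s=\exp(m\hat{\tau})s_{*}$.

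Finally I would set $\eta:=\eta_{*}-d\tau$ and $\hat{\eta}:=\hat{\eta}_{*}-d\hat{\tau}$, which are again gauge potentials of $f$ and $\hat{f}$ since $d\tau\in\Omega^{1}(\wedge^{2}f)$ and $d\hat{\tau}\in\Omega^{1}(\wedge^{2}\hat{f})$. By Lemma~\ref{lem:flatgorb}, $d+m\eta=\exp(m\tau)\cdot(d+m\eta_{*})$ and $d+m\hat{\eta}=\exp(m\hat{\tau})\cdot(d+m\hat{\eta}_{*})$, so the parallel subbundle $\hat{s}_{*}$ of $d+m\eta_{*}$ transports to the parallel subbundle $\hat{s}=\exp(m\tau)\hat{s}_{*}$ of $d+m\eta$, and $s=\exp(m\hat{\tau})s_{*}$ is parallel for $d+m\hat{\eta}$, as required. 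Apart from the transitivity/non-degeneracy step, everything is bookkeeping.
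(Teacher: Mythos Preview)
Your argument is correct and follows the same approach as the paper: the paper's proof is the short paragraph immediately preceding the proposition, which simply asserts the transitivity of $\exp(\wedge^{2}f)$ on $\hat{f}\setminus s_{0}$ (and symmetrically) and then applies Lemma~\ref{lem:flatgorb} exactly as you do. Your write-up is more complete in that you supply the dimension count identifying $s$ and $\hat{s}$ and you actually verify the transitivity via the explicit computation $\exp(\lambda\,\sigma_{0}\wedge\sigma_{*})\hat{\sigma}_{*}=\hat{\sigma}_{*}-\lambda(\sigma_{*},\hat{\sigma}_{*})\sigma_{0}$, both of which the paper leaves to the reader.
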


\subsection{The enveloping sphere congruence}
In this subsection we will show that the nature of the enveloping sphere congruence $s_{0}$ determines when umbilics appear on a Darboux transform. Furthermore, we will see how we can determine the middle potential of a Darboux transform. 

Suppose that $f$ is an umbilic-free Lie applicable surface and as in Subsection~\ref{subsec:inv} we make the assumption that the signature of $q$ is constant over $\Sigma$. 

\begin{proposition}
$p\in \Sigma$ is an umbilic point of $\hat{f}$ if and only if $s_{0}$ coincides with one of the isothermic sphere congruences at $p$. 
\end{proposition}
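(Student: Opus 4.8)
The plan is to prove the two implications separately, exploiting the fact (noted just after the definition of Darboux transforms) that the Darboux transform $\hat f$ and the enveloped congruence $s_0 = f\cap\hat f^\perp$ are independent of the gauge potential chosen in $[\eta^{mid}]$, so that for each direction I may work with whichever representative is most convenient. Throughout, write $\sigma_0 = \alpha\sigma_1 + \beta\sigma_2$ in terms of the special curvature-sphere lifts; since $s_0$ is nowhere a curvature sphere, $\alpha$ and $\beta$ vanish nowhere, and since $\hat f$ has rank $2$ the subbundle $\hat s$ is nowhere orthogonal to $f$. Also recall that $s_0\le\hat f^\perp$ gives $(\sigma_0,\hat\sigma)=0$ for $\hat\sigma\in\Gamma\hat s$, i.e. $\alpha(\sigma_1,\hat\sigma)+\beta(\sigma_2,\hat\sigma)=0$.

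For the implication that $s_0(p)\in\{s^+(p),s^-(p)\}$ forces $p$ to be umbilic, say $s_0(p)=s^+(p)$, I would pass to the Christoffel dual gauge $\eta^+ = \sigma^+\wedge d\sigma^-$ of Proposition~\ref{prop:chrlifts} and let $\hat\sigma$ span the null line parallel for $d+m\eta^+$ (so that $\hat f = s_0\oplus\langle\hat\sigma\rangle$), whence $d\hat\sigma = -m\,\eta^+\hat\sigma = -m\bigl((\sigma^+,\hat\sigma)\,d\sigma^- - (d\sigma^-,\hat\sigma)\,\sigma^+\bigr)$. At $p$ we have $\sigma^+(p)\in s^+(p)=s_0(p)\le\hat f^\perp(p)$, hence $(\sigma^+,\hat\sigma)(p)=0$, and therefore $d_X\hat\sigma|_p\in s^+(p)=s_0(p)\le\hat f(p)$ for every $X\in T_p\Sigma$. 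Thus the solder form of $\hat f$ annihilates $\hat s(p)$ in every tangent direction, so $\hat s(p)$ is a curvature sphere of $\hat f$ at $p$ with curvature space all of $T_p\Sigma$; since at each point there are only one or two curvature spheres (and two distinct ones have rank-$1$ curvature spaces), this forces $p$ to be umbilic.

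For the converse, suppose $p$ is an umbilic of $\hat f$ and work with the middle potential, using $\eta^{mid}\equiv -\sigma_1\wedge d_2\sigma_1 + \epsilon^2\,\sigma_2\wedge d_1\sigma_2\ \bmod\ \Omega^1(\wedge^2 f)$ and the section $\hat\sigma\in\Gamma\hat s$ parallel for $d+m\eta^{mid}$. The unique curvature sphere at $p$ is spanned by some $\hat\tau = a\sigma_0 + \hat\sigma$ (its $\hat\sigma$-component is nonzero, as $s_0$ is never a curvature sphere), and since $d\sigma_0$ and $\eta^{mid}(\,\cdot\,)\hat\sigma$ take values in $f^\perp$ while $f^\perp(p)\cap\hat f(p)=s_0(p)$, the umbilic condition $d_X\hat\tau|_p\in\hat f(p)$ forces in particular $a\,d_X\sigma_0\equiv m\,\eta^{mid}(X)\hat\sigma\ \bmod\ f$ at $p$ for all $X$. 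Evaluating along $T_1$ and $T_2$ and using $d_{X_1}\sigma_0\equiv\beta\,d_{X_1}\sigma_2$, $d_{X_2}\sigma_0\equiv\alpha\,d_{X_2}\sigma_1$, $\eta^{mid}(X_1)\hat\sigma\equiv\epsilon^2(\sigma_2,\hat\sigma)\,d_{X_1}\sigma_2$ and $\eta^{mid}(X_2)\hat\sigma\equiv-(\sigma_1,\hat\sigma)\,d_{X_2}\sigma_1\ \bmod\ f$, together with the fact that $d_{X_1}\sigma_2$ and $d_{X_2}\sigma_1$ span $f^\perp/f$, yields the scalar identities $a\beta = m\epsilon^2(\sigma_2,\hat\sigma)$ and $a\alpha = -m(\sigma_1,\hat\sigma)$ at $p$. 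Eliminating $(\sigma_1,\hat\sigma)$ and $(\sigma_2,\hat\sigma)$ via the orthogonality relation above, and using $a(p)\ne0$ (otherwise $\hat\sigma(p)\perp f(p)$, which would force $\hat s(p)=s_0(p)$), gives $\beta(p)^2=\epsilon^2\alpha(p)^2$, i.e. $\sigma_0(p)\in\langle\sigma_1\pm\epsilon\sigma_2\rangle_p$, so $s_0(p)\in\{s^+(p),s^-(p)\}$.

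The step I expect to be the main obstacle is the bookkeeping in the converse: one must keep careful track of which quotient ($\bmod\ f$ versus $\bmod\ s_0$) is in play, and check that none of the quantities one divides by ($a$, $\alpha$, $\beta$, $(\sigma_1,\hat\sigma)$, $(\sigma_2,\hat\sigma)$) vanishes at $p$, each such vanishing being excluded by the hypotheses that $f$ is umbilic-free and $s_0$ is nowhere a curvature sphere. It is also worth recording that when $\epsilon\in\{0,i\}$ the identity $\beta^2=\epsilon^2\alpha^2$ would force $\sigma_0(p)$ to be zero or to span a curvature sphere, which is impossible; hence in those cases neither side of the equivalence ever holds, and the proposition specializes to the statement that Darboux transforms of $\Omega_0$-surfaces, and of $\Omega$-surfaces with positive definite $q$, are automatically umbilic-free.
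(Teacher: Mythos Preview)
Your proof is correct, and the forward implication follows essentially the same line as the paper (use the isothermic gauge potential so that the parallel section has derivative landing in $s_0$ at $p$). For the converse, however, your route is genuinely different. The paper exploits the gauge freedom: at an umbilic $p$ it picks the representative $\eta\in[\eta^{mid}]$ for which the unique curvature sphere $\hat s\le\hat f$ is the parallel null line of $d+m\eta$, observes that $\eta_p\hat\sigma(p)$ must then land in $s_0(p)$, and deduces that $\eta_p$ takes values in $s_0(p)\wedge f(p)^\perp$; Lemma~\ref{lem:pointiso} then identifies $s_0(p)$ with one of the isothermic sphere congruences. You instead fix the middle potential and carry out an explicit computation in the frame $\{\sigma_1,\sigma_2\}$, arriving at the scalar relation $\beta^2=\epsilon^2\alpha^2$ directly. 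Your approach is self-contained and makes the $\epsilon\in\{0,i\}$ degeneration transparent (neither side of the equivalence can hold), at the cost of more bookkeeping; the paper's argument is shorter and more conceptual, but relies on the auxiliary Lemma~\ref{lem:pointiso}.

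One small point: in the $\epsilon=0$ case your equation $a\beta=m\epsilon^2(\sigma_2,\hat\sigma)$ gives $a\beta=0$, hence $a=0$ (since $\beta\neq 0$), \emph{before} you can divide by $a$ to reach $\beta^2=\epsilon^2\alpha^2$. The contradiction then comes instead from $(\sigma_1,\hat\sigma)=0$, which violates the standing hypothesis that $\hat s$ is nowhere orthogonal to the curvature spheres of $f$; so the conclusion of your final paragraph is right, but the mechanism is slightly different from what you wrote.
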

\begin{proof}
Suppose that $s_{0}$ coincides with an isothermic sphere congruence $s\le f$ at $p$. Let $\hat{\sigma}\in\Gamma \hat{f}$ be a parallel section of $d+m\eta$, where $\eta\in\Omega^{1}(s\wedge f^{\perp})$ is the isothermic gauge potential associated to $s$. Since $s_{0}$ coincides with $s$ at $p$, we have that $\hat{\sigma}(p)\in s(p)^{\perp}$. Then 
\[ (d\hat{\sigma})_{p} = -m\eta_{p}\hat{\sigma}(p) \in  T_{p}\Sigma\otimes s_{0}(p).\]
Therefore, $p$ is an umbilic point of $\hat{f}$. 

Conversely, suppose that $p$ is an umbilic point of $\hat{f}$. Then there exists $\hat{s}\le \hat{f}$ such that $(d\hat{\sigma})_{p}\in T_{p}\Sigma\otimes \hat{f}(p)$ for all $\hat{\sigma}\in\Gamma \hat{s}$. Since we assumed that $s_{0}$ is never a curvature sphere, we have that $\hat{s}\cap s_{0}=\{0\}$. Now we may choose $\eta\in\Omega^{1}(f\wedge f^{\perp})$ such that $\hat{s}$ is a parallel subbundle of $d+m\eta$. Let $\hat{\sigma}\in\Gamma \hat{s}$ be a parallel section of $d+m\eta$. Then at $p$ 
\[ m\eta_{p}\hat{\sigma}(p) = -(d\hat{\sigma})_{p} \in T_{p}\Sigma\otimes \hat{f}(p). \]
Moreover, since $\eta\in\Omega^{1}(f\wedge f^{\perp})$, $\eta_{p}\hat{\sigma}(p)$ takes values in $f(p)^{\perp}$. Thus, $\eta_{p}\hat{\sigma}(p)$ takes values in $s_{0} = \hat{f}\cap f^{\perp}$. Now for some complementary sphere congruence $s\le f$ to $s_{0}$, we may write
\[ \eta = \sigma_{0}\wedge \omega_{0} + \sigma\wedge \omega,\]
where $\omega_{0},\omega\in\Omega^{1}(f^{\perp})$, $\sigma_{0}\in\Gamma s_{0}$ and $\sigma\in\Gamma s$. Thus 
\[ \eta_{p}\hat{\sigma}(p) = (\sigma(p),\hat{\sigma}(p))\omega_{p} \, \bmod\, T_{p}^{*}\Sigma\otimes f(p).\]
Since $s$ is complementary to $s_{0}$, we must have that $(\sigma(p),\hat{\sigma}(p))$ is non-zero and thus $\omega_{p}\in T_{p}\Sigma\otimes f(p)$. Therefore, $\eta_{p}\in T_{p}\Sigma\otimes (s_{0}(p)\wedge f(p)^{\perp})$. Hence, by Lemma~\ref{lem:pointiso}, $s_{0}$ coincides with an isothermic sphere congruence at $p$. 
\end{proof}

Recall in Subsection~\ref{subsubsec:Dq} that we defined $\Delta_{q}$ and $\zeta_{q}$ associated to a Lie applicable surface. Using Proposition~\ref{prop:isozeta} we obtain the following corollary:

\begin{corollary}
\label{cor:darbumb}
$p$ is an umbilic point of $\hat{f}$ if and only if $\zeta_{q}(s_{0}(p),s_{0}(p))=0$. 
\end{corollary}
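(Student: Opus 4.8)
The plan is to chain together the two results that immediately precede the corollary, so the proof is essentially a one-line deduction once the ingredients are in place. The Proposition stated just above characterises the umbilic points of the Darboux transform $\hat f$: the point $p$ is umbilic for $\hat f$ if and only if the enveloping sphere congruence $s_{0}$ coincides, at $p$, with one of the isothermic sphere congruences enveloped by $f$. On the other hand, Proposition~\ref{prop:isozeta} characterises exactly those isothermic sphere congruences among all rank $1$ subbundles $s\le f$ by the vanishing of $\zeta_{q}$, namely $\zeta_{q}(s(p),s(p))=0$ precisely when $s$ agrees with an isothermic sphere congruence at $p$.

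First I would note that $s_{0}$ is indeed a rank $1$ subbundle of $f$: by construction $s_{0}=\hat s^{\perp}\cap f$, so Proposition~\ref{prop:isozeta} applies verbatim with $s=s_{0}$. Then the equivalence reads off immediately: $p$ is an umbilic point of $\hat f$ $\iff$ $s_{0}$ coincides with one of the isothermic sphere congruences at $p$ (preceding Proposition) $\iff$ $\zeta_{q}(s_{0}(p),s_{0}(p))=0$ (Proposition~\ref{prop:isozeta} with $s=s_{0}$), which is exactly the statement of the corollary.

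There is no genuine obstacle here; the only point worth making explicit is the membership $s_{0}\le f$ needed to invoke Proposition~\ref{prop:isozeta}. Should a more self-contained argument be desired, one could instead choose a lift $\sigma_{0}=\alpha\sigma_{1}+\beta\sigma_{2}$ of $s_{0}$ and recompute, as in the proof of Proposition~\ref{prop:isozeta}, that $\zeta_{q}(\sigma_{0},\sigma_{0})=-\beta^{2}+\epsilon^{2}\alpha^{2}$, so that its vanishing is equivalent to $\beta=\pm\epsilon\,\alpha$, i.e.\ to $\sigma_{0}(p)\in s^{\pm}(p)$, and then apply the preceding Proposition; but the two-step deduction via the cited propositions is cleaner and is the route I would take.
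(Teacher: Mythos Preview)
Your proposal is correct and matches the paper's own reasoning: the paper simply remarks that, using Proposition~\ref{prop:isozeta}, the corollary follows from the preceding Proposition, which is exactly the two-step chain you describe. Your observation that $s_{0}\le f$ is needed to apply Proposition~\ref{prop:isozeta} is the only thing worth making explicit, and you do.
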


Now suppose that $\hat{f}$ is umbilic-free. Then by Corollary~\ref{cor:darbumb}, $\zeta_{q}(s_{0},s_{0})$ is nowhere zero, i.e., $(\Delta_{q}\sigma_{0})\cap s_{0}^{\perp}=\{0\}$ for any lift $\sigma_{0}$ of $s_{0}$. We may then define a rank 4 subbundle of $\underline{\mathbb{R}}^{4,2}$ with signature $(3,1)$, 
\[ V_{q} := s_{0}\oplus d\sigma_{0}(T\Sigma) \oplus \langle \Delta_{q}\sigma_{0}\rangle.\] 
Recall in Definition~\ref{def:envpoint} that we defined the enveloping point $s_{\infty}$ in the plane $f+\hat{f}$ of two Ribaucour transforms as the unique point map in $f+\hat{f}$ satisfying $s_{\infty}^{(1)}\le f+\hat{f}$. Taking lines between the corresponding curvature spheres of $f$ and $\hat{f}$, we obtain $s_{\infty}$ as the intersection of these two lines. 

\begin{proposition}
\label{prop:darbmid}
Let $\eta^{mid}$ denote the middle potential of $f$ and $\hat{\eta}^{mid}$ the middle potential of $\hat{f}$. Then
$V_{q}^{\perp} = s\oplus\hat{s}$ where $s\le f$ is a parallel subbundle of $d+m\hat{\eta}^{mid}$ and $\hat{s}\le \hat{f}$ is a parallel subbundle of $d+m\eta^{mid}$. Furthermore, $s_{\infty}\le V_{q}^{\perp}$.
\end{proposition}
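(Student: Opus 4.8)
The plan is to first rewrite $V_q$ and $V_q^{\perp}$ intrinsically in terms of the pair $(f,\hat f)$, and then verify the assertions in turn. The bundle $s_0^{(1)}=s_0\oplus d\sigma_0(T\Sigma)$ has rank at most $3$ and, by Lemma~\ref{lem:envperp}, lies in the rank $3$ bundle $(f+\hat f)^{\perp}$; since $V_q$ has rank $4$ it follows that $s_0^{(1)}=(f+\hat f)^{\perp}$ and $\Delta_q\sigma_0\notin(f+\hat f)^{\perp}$, so that $V_q=(f+\hat f)^{\perp}\oplus\langle\Delta_q\sigma_0\rangle$ and hence $V_q^{\perp}=(f+\hat f)\cap\langle\Delta_q\sigma_0\rangle^{\perp}$. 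As $\hat f$ is umbilic-free, $\zeta_q(s_0,s_0)=(\Delta_q\sigma_0,\sigma_0)$ is nowhere zero by Corollary~\ref{cor:darbumb}, so $s_0\not\subseteq V_q^{\perp}$; thus $V_q^{\perp}$ is a rank $2$ subbundle of $f+\hat f$ transversal to $s_0$, and a rank count gives that $s:=f\cap V_q^{\perp}$ and $\hat s:=\hat f\cap V_q^{\perp}$ are line subbundles with $V_q^{\perp}=s\oplus\hat s$. It then remains to identify $s$ and $\hat s$ with the asserted parallel subbundles and to show $s_{\infty}\le V_q^{\perp}$.

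The inclusion $s_{\infty}\le V_q^{\perp}$ is the easy part. For $\sigma_{\infty}\in\Gamma s_{\infty}$ we have $d\sigma_{\infty}\in\Omega^{1}(f+\hat f)$ by Definition~\ref{def:envpoint}, while $d\sigma_0\in\Omega^{1}((f+\hat f)^{\perp})$ by Lemma~\ref{lem:envperp} and $\sigma_0$ is a section of the radical $s_0$ of $f+\hat f$. Hence $(\sigma_{\infty},\sigma_0)=0$; differentiating this along a vector field $X$ and using $d_X\sigma_{\infty}\in f+\hat f\subseteq s_0^{\perp}$ gives $(\sigma_{\infty},d_X\sigma_0)=0$, and differentiating once more and using $d_X\sigma_{\infty}\in f+\hat f$ together with $d_X\sigma_0\in(f+\hat f)^{\perp}$ gives $(\sigma_{\infty},d_Xd_X\sigma_0)=0=(\sigma_{\infty},d_Yd_Y\sigma_0)$. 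So $\sigma_{\infty}$ is orthogonal to $s_0$, to $d\sigma_0(T\Sigma)$ and to $\Delta_q\sigma_0$, i.e.\ $\sigma_{\infty}\perp V_q$.

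For the parallel subbundles, it suffices to show that the parallel subbundle $\hat s\le\hat f$ of $d+m\eta^{mid}$ from which $\hat f$ is formed lies in $V_q^{\perp}$: then $\hat s=\hat f\cap V_q^{\perp}$ since the latter has rank one, and the symmetric statement for $s$ follows because $f$ is a Darboux transform of $\hat f$ with the same parameter $m$, $\hat q=q$ by Proposition~\ref{prop:hateta}, and $f,\hat f$ share the curvature subbundles $T_1,T_2$, so that $V_{\hat q}=V_q$ and $\Delta_{\hat q}=\Delta_q$. So let $\hat\sigma\in\Gamma\hat s$ be parallel, $d\hat\sigma=-m\,\eta^{mid}(\,\cdot\,)\hat\sigma$, and $\sigma_0\in\Gamma s_0$. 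Then $\hat\sigma\perp s_0$ since $s_0\subseteq\hat s^{\perp}$, and $\hat\sigma\perp d\sigma_0(T\Sigma)$ since $d\sigma_0\in\Omega^{1}(\hat f^{\perp})$; the remaining point is $\hat\sigma\perp\Delta_q\sigma_0$. Differentiating $(\hat\sigma,\sigma_0)=0$ and using $d_X\hat\sigma\in f^{\perp}$, $\sigma_0\in f$ gives $(\hat\sigma,d_X\sigma_0)=0$; differentiating this once more along $X$ (and likewise along $Y$) and invoking the skew-symmetry of the endomorphisms $\eta^{mid}(X),\eta^{mid}(Y)$, one obtains
\[ (\hat\sigma,\Delta_q\sigma_0)=-m\,\bigl(\hat\sigma,\ \eta^{mid}(X)d_X\sigma_0-\epsilon^{2}\,\eta^{mid}(Y)d_Y\sigma_0\bigr).\]

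It therefore remains to evaluate the right hand side, and this is the main obstacle. Here I would use the explicit middle potential $\eta^{mid}=\sigma_1\wedge\star d\sigma_1+\epsilon^{2}\sigma_2\wedge\star d\sigma_2$ of~(\ref{eqn:midform}) together with $(a\wedge b)c=(a,c)b-(b,c)a$: since $f$ is Legendre, $d\sigma\in\Omega^{1}(f^{\perp})$ for every $\sigma\in\Gamma f$, and since $s_1,s_2$ are curvature spheres, $d_X\sigma_1,d_Y\sigma_2\in\Gamma f$; these orthogonalities make all but one term in each product vanish, yielding $\eta^{mid}(X)d_X\sigma=-\epsilon^{2}(d_X\sigma_2,d_X\sigma)\,\sigma_2$ and $\eta^{mid}(Y)d_Y\sigma=(d_Y\sigma_1,d_Y\sigma)\,\sigma_1$ for all $\sigma\in\Gamma f$. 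Writing $\sigma_0=a\sigma_1+b\sigma_2$ and using $(d_X\sigma_2,d_X\sigma_2)=q_1(X,X)=1$, $(d_Y\sigma_1,d_Y\sigma_1)=q_2(Y,Y)=1$ and the mutual orthogonalities among $\sigma_1,\sigma_2$ and their derivatives, one finds $(d_X\sigma_2,d_X\sigma_0)=b$ and $(d_Y\sigma_1,d_Y\sigma_0)=a$, hence $\eta^{mid}(X)d_X\sigma_0-\epsilon^{2}\eta^{mid}(Y)d_Y\sigma_0=-\epsilon^{2}(a\sigma_1+b\sigma_2)=-\epsilon^{2}\sigma_0$, and so $(\hat\sigma,\Delta_q\sigma_0)=m\epsilon^{2}(\hat\sigma,\sigma_0)=0$, as required. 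A final minor point is the degenerate case $\epsilon=0$: there one fixes the non-unique lift $\sigma_2$ as in Remark~\ref{rem:degcase} and notes that the computation above still gives $\eta^{mid}(X)d_X\sigma_0=0$, and that the last summand of $V_q$ is only defined modulo $s_0\oplus d\sigma_0(T\Sigma)$, so $V_q$ is unaffected by the choice.
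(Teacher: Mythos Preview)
Your proof is correct and follows essentially the same route as the paper: both reduce the key orthogonality $(\hat\sigma,\Delta_q\sigma_0)=0$ to the identity $\eta^{mid}(X)d_X\sigma_0-\epsilon^{2}\eta^{mid}(Y)d_Y\sigma_0=-\epsilon^{2}\sigma_0$, which the paper packages as a separate lemma (Lemma~\ref{lem:darbmid}, written in terms of $\tau\in\Gamma(\wedge^{2}f)$ with $\sigma_0=\tau\hat\sigma$) while you compute it directly in coordinates $\sigma_0=a\sigma_1+b\sigma_2$. Your preliminary identification $V_q^{\perp}=(f+\hat f)\cap\langle\Delta_q\sigma_0\rangle^{\perp}$ and the rank count making $s,\hat s$ into line bundles are left implicit in the paper, so your write-up is in fact slightly more detailed.
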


To prove Proposition~\ref{prop:darbmid} we shall use the following lemma:

\begin{lemma}
\label{lem:darbmid}
Suppose that $\eta^{mid}$ is the middle potential. Let $\tau\in \Gamma(\wedge^{2}f)$. Then, 
\[ \eta^{mid}(X) d_{X}\tau -\epsilon^{2} \eta^{mid}(Y)d_{Y}\tau =-\epsilon^{2}\tau,\]
where $X\in\Gamma T_{1}$, $Y\in\Gamma T_{2}$ such that $q_{1}(X,X)=1$ and $q_{2}(Y,Y)=1$. 
\end{lemma}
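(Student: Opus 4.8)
The plan is to compute directly using the explicit form of the middle potential given in~(\ref{eqn:midform}), together with the special lifts $\sigma_1 \in \Gamma s_1$, $\sigma_2 \in \Gamma s_2$ normalised by $q_1 = (d\sigma_2, d\sigma_2)$, $q_2 = (d\sigma_1, d\sigma_1)$. Recall that with respect to $X \in \Gamma T_1$, $Y \in \Gamma T_2$ normalised so that $q_1(X,X) = q_2(Y,Y) = 1$, the Hodge star gives $\eta^{mid} = \sigma_1 \wedge \star d\sigma_1 + \epsilon^2 \sigma_2 \wedge \star d\sigma_2$, so that $\eta^{mid}(X) = -\sigma_1 \wedge d_Y\sigma_1 + \epsilon^2 \sigma_2 \wedge d_X \sigma_2$ restricted appropriately — more precisely $\eta^{mid}(X) = \epsilon^2\,\sigma_2 \wedge d_X\sigma_2$ on the $\mathfrak{h}$-part and $\eta^{mid}(Y) = -\,\sigma_1 \wedge d_Y\sigma_1$, using conditions (a) from Subsection~\ref{subsec:inv} that $d_1\sigma_1 \in \Gamma T_1^* \otimes s_2$ and $d_2\sigma_2 \in \Gamma T_2^* \otimes s_1$.

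First I would write an arbitrary $\tau \in \Gamma(\wedge^2 f)$ as $\tau = g\, \sigma_1 \wedge \sigma_2$ for a smooth function $g$, and then express $d_X\tau$ and $d_Y\tau$ in terms of $\sigma_1$, $\sigma_2$ and their derivatives. Using that $d_X\sigma_1 \in \Gamma f$ (since $s_1$ is a curvature sphere with curvature direction $T_1$) and that, by condition (a), $d_Y\sigma_2 = \gamma\,\sigma_1$ for a function $\gamma$ — wait, more carefully: $d_X\sigma_1 = \beta\,\sigma_2$ and $d_Y\sigma_2 = \gamma\,\sigma_1$ after imposing (a) (using the notation $\alpha=\delta=0$ from the excerpt). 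Then $d_X\tau = d_X g\,\sigma_1\wedge\sigma_2 + g\,\beta\,\sigma_2\wedge\sigma_2 + g\,\sigma_1\wedge d_X\sigma_2 = d_Xg\,\sigma_1\wedge\sigma_2 + g\,\sigma_1\wedge d_X\sigma_2$, and similarly $d_Y\tau = d_Yg\,\sigma_1\wedge\sigma_2 + g\,d_Y\sigma_1\wedge\sigma_2$.

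Next I would apply $\eta^{mid}(X) = \epsilon^2\,\sigma_2\wedge d_X\sigma_2$ (as an endomorphism via the $a\wedge b$ identification) to $d_X\tau$ and $\eta^{mid}(Y) = -\,\sigma_1 \wedge d_Y\sigma_1$ to $d_Y\tau$. Here the key mechanism is the formula $(a\wedge b)c = (a,c)b - (b,c)a$: applying $\sigma_2 \wedge d_X\sigma_2$ to a combination involving $\sigma_1$, $\sigma_2$, $d_X\sigma_2$ kills most terms because $\sigma_1, \sigma_2$ are null, $(\sigma_1,\sigma_2) \ne 0$, $(\sigma_2, d_X\sigma_2) = 0$, and $(d_X\sigma_2, d_X\sigma_2) = q_1(X,X) = 1$. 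The surviving contributions from $\epsilon^2 \eta^{mid}(X)d_X\tau$ and $-\epsilon^2 \eta^{mid}(Y)d_Y\tau$ — after expanding and using $(\sigma_1,\sigma_2)=:-1$ (fixing the normalisation of the pairing, which is consistent with the sign conventions elsewhere in the paper) and $(d_Y\sigma_1, d_Y\sigma_1) = q_2(Y,Y) = 1$ — should collapse to exactly $-\epsilon^2 g\,\sigma_1 \wedge \sigma_2 = -\epsilon^2 \tau$, with the $d_Xg$ and $d_Yg$ terms cancelling against each other and the cross terms vanishing by nullity and orthogonality.

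The main obstacle is bookkeeping: getting every pairing and sign right in the expansion of $(\sigma_2 \wedge d_X\sigma_2)(\sigma_1 \wedge d_X\sigma_2)$-type expressions, and confirming that the $g$-derivative terms genuinely cancel in the combination $\eta^{mid}(X)d_X\tau - \epsilon^2\eta^{mid}(Y)d_Y\tau$ rather than in each piece separately. I would organise the computation by first treating the case $\tau = \sigma_1\wedge\sigma_2$ (i.e. $g \equiv 1$), verifying the identity there, and then checking that the extra terms produced by a non-constant $g$ mutually annihilate — this isolates the geometric content (the $-\epsilon^2\tau$) from the Leibniz-rule noise. One should also double-check the degenerate case $\epsilon = 0$, where the statement reduces to $0 = 0$ and is vacuous, consistent with Remark~\ref{rem:degcase}.
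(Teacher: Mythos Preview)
Your approach---direct computation with the special lifts $\sigma_1,\sigma_2$ and the explicit form~(\ref{eqn:midform}) of $\eta^{mid}$---is exactly what the paper does. However, there is one genuine error and one misconception in your sketch.

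The error: you assert $(\sigma_1,\sigma_2)=-1$. This is false. The bundle $f$ is a maximal \emph{isotropic} rank-$2$ subbundle of $\underline{\mathbb{R}}^{4,2}$, so any two sections of $f$ are orthogonal; in particular $(\sigma_1,\sigma_2)=0$. This pairing genuinely appears when you expand $(\sigma_2\wedge d_X\sigma_2)(\sigma_1\wedge d_X\sigma_2)v$: with the wrong value you would pick up a spurious term $\epsilon^{2}(d_X\sigma_2,v)\,d_X\sigma_2$, which does not lie in $f$, and the computation would not close up to $-\epsilon^{2}\tau$. (You may be conflating this with the normalisation $(\sigma,\hat\sigma)=-1$ used in the Darboux-transform section, where $\sigma\in\Gamma f$ and $\hat\sigma\in\Gamma\hat f$ lie in \emph{different} isotropic planes.)

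The misconception: the $d_Xg$ and $d_Yg$ contributions do not ``cancel against each other''---since $d_Xg$ and $d_Yg$ are independent functions of $g$, that would be impossible. Rather, each vanishes separately: the extra term from differentiating $g$ is $(d_Xg)\,\eta^{mid}(X)\circ(\sigma_1\wedge\sigma_2)$, and this is zero because the endomorphism $\sigma_1\wedge\sigma_2$ has image in $f$, while $\eta^{mid}(X)\in f\wedge f^{\perp}$ annihilates $f$. This is precisely why the paper's proof simply sets $\tau=\sigma_1\wedge\sigma_2$ and stops: once one notes $\eta^{mid}(Z)\circ\tau_0=0$ for any $\tau_0\in\Gamma(\wedge^{2}f)$, the identity is automatically $C^{\infty}(\Sigma)$-linear in $\tau$, so the case $g\equiv 1$ suffices.
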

\begin{proof}
Let $\sigma_{1}\in\Gamma s_{1}$, $\sigma_{2}\in\Gamma s_{2}$ be the special lifts of the curvature spheres such that 
\[ q = -\epsilon^{2}(d\sigma_{2},d\sigma_{2}) + (d\sigma_{1},d\sigma_{1})\]
and let $\tau = \sigma_{1}\wedge\sigma_{2}\in\Gamma (\wedge^{2}f)$. Recall from~(\ref{eqn:midform}) that the middle potential is given by 
\[ \eta^{mid} = \sigma_{1}\wedge \star d\sigma_{1} + \epsilon^{2}\sigma_{2}\wedge \star d\sigma_{2}.\]
Thus, for $v\in\Gamma \underline{\mathbb{R}}^{4,2}$,
\begin{align*}
(\eta^{mid}(X)d_{X}\tau) v&= (\epsilon^{2}\sigma_{2}\wedge d_{X}\sigma_{2})(\sigma_{1}\wedge d_{X}\sigma_{2})v \\
&=-\epsilon^{2}(d_{X}\sigma_{2},d_{X}\sigma_{2})(\sigma_{1},v) \sigma_{2}\\
&= -\epsilon^{2} (\sigma_{1},v) \sigma_{2}. 
\end{align*}
Similarly, $(\eta^{mid}(Y)d_{Y}\tau) v = - (\sigma_{2},v)\sigma_{1}$. Hence, 
\[  (\eta^{mid}(X) d_{X}\tau -\epsilon^{2} \eta^{mid}(Y)d_{Y}\tau)v = -\epsilon^{2}(\sigma_{1},v)\sigma_{2} +\epsilon^{2}(\sigma_{2},v)\sigma_{1} = -\epsilon^{2}\tau v\]
and the result follows. 
\end{proof}

\begin{proof}[Proof of Proposition~\ref{prop:darbmid}]
Let $\hat{\sigma}\in\Gamma \hat{s}$ be a parallel section of $d+m\eta^{mid}$ and let $\sigma_{0}\in\Gamma s_{0}$. Then,  
\begin{align*}
(\Delta_{q}\sigma_{0}, \hat{\sigma}) &= (d_{X}d_{X}\sigma_{0} -\epsilon^{2} d_{Y}d_{Y}\sigma_{0}, \hat{\sigma})\\
&= -(d_{X}\sigma_{0},d_{X}\hat{\sigma}) +\epsilon^{2}(d_{Y}\sigma_{0},d_{Y}\hat{\sigma})\\
&= m (( d_{X}\sigma_{0},\eta^{mid}(X)\hat{\sigma}) -\epsilon^{2} (d_{Y}\sigma_{0},\eta^{mid}(Y)\hat{\sigma}) )\\
&= -m (\eta^{mid}(X)d_{X}\sigma_{0} -\epsilon^{2} \eta^{mid}(Y)d_{Y}\sigma_{0},\hat{\sigma}).
\end{align*}
Now, there exists $\tau\in\Gamma(\wedge^{2}f)$ such that $\sigma_{0} = \tau\hat{\sigma}$. Hence, 
\[ (\Delta_{q}\sigma_{0}, \hat{\sigma}) = -m ( (\eta^{mid}(X) d_{X}\tau -\epsilon^{2} \eta^{mid}(Y)d_{Y}\tau)\hat{\sigma}, \hat{\sigma}) = m\epsilon^{2} (\tau \hat{\sigma},\hat{\sigma}),\]
by Lemma~\ref{lem:darbmid}. By the skew-symmetry of $\tau$, $(\Delta_{q}\sigma_{0}, \hat{\sigma})$ vanishes. 

By a symmetric argument, $(\Delta_{q}\sigma_{0}, \sigma)$ vanishes, where $\sigma$ is a parallel section of $d+m\hat{\eta}^{mid}$.  

Now let $\sigma_{\infty}\in\Gamma s_{\infty}$. Then $d\sigma_{\infty}\in\Omega^{1}(f+\hat{f})$. Then using that $d\sigma_{0}\in\Omega^{1}((f+\hat{f})^{\perp})$ for any $\sigma_{0}\in\Gamma s_{0}$, it is clear that $s_{\infty} \le (s_{0}\oplus d\sigma_{0}(T\Sigma))^{\perp}$, from which it follows that $(\Delta_{q}\sigma_{0}, \sigma_{\infty})$ vanishes.
\end{proof}

As a corollary to Proposition~\ref{prop:darbmid} we obtain the following theorem that tells us how to determine the middle potential of $\hat{f}$:

\begin{theorem}
Suppose that $f$ and $\hat{f}$ are umbilic-free Darboux transforms of each other with parameter $m$. Let $\hat{s}\le \hat{f}$ be the parallel subbundle of $d+m\eta^{mid}$. Then
\begin{equation*} 
d+t\hat{\eta}^{mid} = \Gamma^{\hat{s}}_{s}(1-t/m)\cdot (d+t\eta^{mid}),
\end{equation*}
for $s:= f\cap l$, where $l$ is the line spanned by $\hat{s}$ and $s_{\infty}$.
\end{theorem}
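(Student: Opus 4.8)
The plan is to combine Proposition~\ref{prop:darbmid} with Proposition~\ref{prop:hateta} in the particular gauge furnished by the middle potential. First I would recall from Proposition~\ref{prop:darbmid} that $\hat{s}\le\hat{f}$, defined as the parallel subbundle of $d+m\eta^{mid}$, satisfies $\hat{s}\le V_{q}^{\perp}$, and that $s_{\infty}\le V_{q}^{\perp}$ as well. Since $V_{q}^{\perp}=s\oplus\hat{s}$ has rank $2$ and $\hat{s}\cap s_{0}=\{0\}$ (because $s_0$ is never a curvature sphere, hence not asymptotic to $V_q^\perp$ in the relevant sense), the line $l$ spanned by $\hat{s}$ and $s_{\infty}$ is precisely $V_{q}^{\perp}$, and $s:=f\cap l$ is exactly the parallel subbundle of $d+m\hat{\eta}^{mid}$ appearing in Proposition~\ref{prop:darbmid}. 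In other words, the subbundles named in the statement coincide with those named in Proposition~\ref{prop:darbmid}, so I only need to check that $l\cap s_0=\{0\}$ to make the splitting $\underline{\mathbb{R}}^{4,2}=l\oplus l^\perp$ available.

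Next I would apply Proposition~\ref{prop:hateta} with the specific choice $\eta=\eta^{mid}$ and with $\hat{s}$ the parallel subbundle of $d+m\eta^{mid}$, and with the rank $2$ subbundle $l=\langle\hat{s},s_\infty\rangle=V_q^\perp$: this produces a closed $1$-form $\hat{\eta}\in\Omega^{1}(\hat{f}\wedge\hat{f}^{\perp})$ with $[\hat{\eta}\wedge\hat{\eta}]=0$, with $s=f\cap l$ parallel for $d+m\hat{\eta}$, and with
\[ d+t\hat{\eta}=\Gamma^{\hat{s}}_{s}(1-t/m)\cdot(d+t\eta^{mid}). \]
It then remains to identify this $\hat{\eta}$ with the middle potential $\hat{\eta}^{mid}$ of $\hat{f}$. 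By uniqueness of the middle potential it suffices to show that the $\mathfrak{m}$-component of $\hat{\eta}$ with respect to the Lie cyclide splitting of $\hat{f}$ lies in $\Omega^{1}(\wedge^{2}\hat{f})$. For this I would use the symmetric counterpart of Proposition~\ref{prop:darbmid}: $s$ is parallel for $d+m\hat{\eta}^{mid}$, and by the uniqueness/transitivity argument already used in Subsection~\ref{subsec:darb} (the same $l$ determines the same parallel subbundles up to gauge in $\Gamma(\wedge^2 f)$ resp. $\Gamma(\wedge^2\hat f)$), the $1$-form $\hat\eta$ produced above and $\hat\eta^{mid}$ differ by $d\hat\tau$ for some $\hat\tau\in\Gamma(\wedge^2\hat f)$; but both have $s$ as a parallel subbundle of $d+m(\cdot)$, which forces $\hat\tau$ to act trivially on the relevant section, hence $\hat\eta=\hat\eta^{mid}$. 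Substituting into the displayed transmutation formula yields the claim.

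I expect the main obstacle to be the bookkeeping in the last identification step: namely, showing cleanly that the closed $1$-form handed to us by Proposition~\ref{prop:hateta} (built from the geometric splitting induced by $l=V_q^\perp$) is not merely gauge-equivalent to $\hat\eta^{mid}$ but literally equal to it. The natural route is to verify directly that $\hat\eta_{\mathfrak{m}}\in\Omega^{1}(\wedge^2\hat f)$ using the explicit description $\hat\eta=\eta_0+(1/m)\beta$ from the proof of Proposition~\ref{prop:hateta} together with the fact (from Proposition~\ref{prop:darbmid}) that the Lie cyclide subbundle $V_q=s_0\oplus d\sigma_0(T\Sigma)\oplus\langle\Delta_q\sigma_0\rangle$ is orthogonal to $l$; since $\hat f=s_0\oplus\hat s\subset V_q^{\perp}\!\oplus\! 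V_q$ splits compatibly with the cyclide decomposition of $\hat f$, the off-diagonal $\mathfrak m$-part of $\hat\eta$ collapses into $\wedge^2\hat f$. Once that containment is in hand, everything else is a direct substitution and the theorem follows.
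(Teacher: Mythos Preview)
Your overall strategy is exactly what the paper intends: the theorem is stated there as a direct corollary of Proposition~\ref{prop:darbmid}, and the argument you give---identify $l=\langle\hat s,s_\infty\rangle$ with $V_q^\perp$, apply Proposition~\ref{prop:hateta} with $\eta=\eta^{mid}$ and this particular $s=f\cap V_q^\perp$, and then identify the resulting $\hat\eta$ with $\hat\eta^{mid}$---is precisely the intended one. Your first route for the identification is also correct: since $\hat\eta-\hat\eta^{mid}=d\hat\tau$ with $\hat\tau\in\Gamma(\wedge^2\hat f)$, Lemma~\ref{lem:flatgorb} gives $d+m\hat\eta=\exp(m\hat\tau)\cdot(d+m\hat\eta^{mid})$, so $\exp(-m\hat\tau)s$ is a parallel subbundle of $d+m\hat\eta^{mid}$ lying in $f$; but the proof of Proposition~\ref{prop:darbmid} actually shows that \emph{any} such parallel subbundle lies in $f\cap V_q^\perp$, which has rank~$1$, so $\exp(-m\hat\tau)s=s$, and the simply transitive action of $\exp(\wedge^2\hat f)$ on $f\setminus s_0$ (discussed just before Proposition~4.8) forces $\hat\tau=0$.

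The alternative you sketch in your final paragraph, however, is not viable as written. You call $V_q$ ``the Lie cyclide subbundle'' and propose to show $\hat\eta_{\mathfrak m}\in\Omega^1(\wedge^2\hat f)$ via the compatibility of $V_q\oplus V_q^\perp$ with the cyclide decomposition of $\hat f$. But $V_q$ has rank~$4$ and signature $(3,1)$, whereas the Lie cyclide splitting of $\hat f$ is $\hat S_1\oplus\hat S_2$ with each summand of rank~$3$ and signature $(2,1)$; these are entirely different decompositions of $\underline{\mathbb R}^{4,2}$, and the splitting $\hat f=s_0\oplus\hat s$ along $V_q\oplus V_q^\perp$ has no a~priori relation to the curvature-sphere splitting $\hat f=\hat s_1\oplus\hat s_2$. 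So the $\hat{\mathfrak m}$-part of $\eta_0+(1/m)\beta$ cannot be read off from orthogonality to $V_q$. Drop this paragraph and rely on your first route, which is both correct and the paper's own argument.
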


\subsection{Isothermic sphere congruences}
Let $f$ and $\hat{f}$ be umbilic-free Darboux transforms of each other with parameter $m$ and suppose that we are working with the isothermic potential $\eta^{+}$ associated to the isothermic sphere congruence $s^{+}$. Then if $\hat{s}\le \hat{f}$ is the parallel subbundle of $d+m\eta^{+}$ then by Proposition~\ref{prop:hateta}, $\hat{\eta}$ defined by 
\[ d+t\hat{\eta} = \Gamma^{\hat{s}}_{s^{+}}(1-t/m)\cdot (d+t\eta^{+})\]
is a closed 1-form. Recall that we split $\eta=\eta_{0}+\eta_{s^{+}}$ and $\hat{\eta}=\hat{\eta}_{0}+\hat{\eta}_{\hat{s}}$, where $\eta_{0},\hat{\eta}_{0}\in \Omega^{1}(s_{0}\wedge l^{\perp})$, $\eta_{s^{+}}\in \Omega^{1}(s^{+}\wedge l^{\perp})$ and $\hat{\eta}_{\hat{s}}\in \Omega^{1}(\hat{s}\wedge l^{\perp})$. Now, in the proof of Proposition~\ref{prop:hateta} we saw that $\eta_{0} = \hat{\eta}_{0}$ and since we are working with the isothermic potential $\eta^{+}\in \Omega^{1}(s^{+}\wedge f^{\perp})$, we have that $\eta_{0}=0$. Thus, $\hat{\eta}\in \Omega^{1}(\hat{s}\wedge l^{\perp})$. Hence, $\hat{s}$ is isothermic and we shall denote it $\hat{s}^{+}$. A symmetric argument yields an analogous result for $s^{-}$.

\begin{proposition}
\label{prop:darbiso}
We may label the isothermic sphere congruences $\hat{s}^{+}$ and $\hat{s}^{-}$ of $\hat{f}$ such that $\hat{s}^{\pm}$ is a parallel subbundle of $d+m\eta^{\pm}$. Furthermore
\[ d+t\hat{\eta}^{\pm} = \Gamma^{\hat{s}^{\pm}}_{s^{\pm}}(1-t/m)\cdot (d+t\eta^{\pm}).\]
\end{proposition}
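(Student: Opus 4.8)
The plan is to read the proposition off from Proposition~\ref{prop:hateta} together with the discussion immediately preceding the statement, supplying the two points that are not yet explicit: that the constructions applied to $\eta^{+}$ and to $\eta^{-}$ produce \emph{distinct} sphere congruences, and that each of these is one of the two isothermic sphere congruences of the $\Omega$-surface $\hat{f}$.

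First I would record the half already in hand. Applying Proposition~\ref{prop:hateta} with the gauge $\eta^{+}\in\Omega^{1}(s^{+}\wedge f^{\perp})$ and letting $\hat{s}^{+}\le\hat{f}$ be the parallel subbundle of $d+m\eta^{+}$ produces a closed $\hat{\eta}^{+}\in\Omega^{1}(\hat{f}\wedge\hat{f}^{\perp})$ with $\hat{q}=q$ and $d+t\hat{\eta}^{+}=\Gamma^{\hat{s}^{+}}_{s^{+}}(1-t/m)\cdot(d+t\eta^{+})$. The splitting argument in the paragraph before the proposition (that $\eta_{0}=\hat{\eta}_{0}$, while $\eta_{0}=0$ because $\eta^{+}$ is valued in $s^{+}\wedge f^{\perp}$) gives $\hat{\eta}^{+}\in\Omega^{1}(\hat{s}^{+}\wedge l^{\perp})$; intersecting with $\Omega^{1}(\hat{f}\wedge\hat{f}^{\perp})$ and using $\hat{s}^{+}\le\hat{f}\le\hat{f}^{\perp}$ upgrades this to $\hat{\eta}^{+}\in\Omega^{1}(\hat{s}^{+}\wedge\hat{f}^{\perp})$, so that $\hat{\eta}^{+}$ exhibits $\hat{s}^{+}$ as an isothermic sphere congruence with isothermic potential $\hat{\eta}^{+}$. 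The same argument with $\eta^{-}$ gives $\hat{s}^{-}$ and $\hat{\eta}^{-}$, together with the displayed $\Gamma$-formula.

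It then remains to see that $\hat{s}^{+}$ and $\hat{s}^{-}$ are precisely the two isothermic sphere congruences of $\hat{f}$, with the asserted pairing. Since $\hat{q}=q$ is non-degenerate, $\hat{f}$ is an $\Omega$-surface and hence envelops exactly two isothermic sphere congruences, which by Proposition~\ref{prop:isozeta} are the two null lines of $\zeta_{\hat{q}}$. As $\hat{\eta}^{+}$ lies in the gauge orbit $[\hat{\eta}^{mid}]$ of $\hat{f}$ (it differs from the middle potential of $\hat{f}$ by $d\hat{\tau}$ for a suitable $\hat{\tau}\in\Gamma(\wedge^{2}\hat{f})$) and $\hat{\eta}^{+}\in\Omega^{1}(\hat{s}^{+}\wedge\hat{f}^{\perp})$, Lemma~\ref{lem:pointiso} applied to $\hat{f}$ forces $\hat{s}^{+}$ to coincide with one of those two congruences, and likewise $\hat{s}^{-}$. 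For distinctness: if $\hat{s}^{+}=\hat{s}^{-}$, pick parallel sections $\hat{\sigma}^{\pm}$ of $d+m\eta^{\pm}$ spanning the common line, say $\hat{\sigma}^{-}=g\hat{\sigma}^{+}$; then $0=(d+m\eta^{-})(g\hat{\sigma}^{+})$ forces $(dg/g)\hat{\sigma}^{+}=-m(\eta^{-}-\eta^{+})\hat{\sigma}^{+}$, whose right-hand side lies in $f^{\perp}$ and whose left-hand side lies in $\hat{s}^{+}$; since $\hat{s}^{+}\cap f^{\perp}\subseteq\hat{f}\cap f^{\perp}=s_{0}$ and $\hat{s}^{+}\cap s_{0}=\{0\}$, both sides vanish, hence $(\eta^{-}-\eta^{+})\hat{\sigma}^{+}=0$. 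But $\eta^{-}-\eta^{+}=-2\epsilon\,d(\sigma_{1}\wedge\sigma_{2})$, and evaluating on $X\in\Gamma T_{1}$ leaves a nonzero term proportional to $d_{X}\sigma_{2}$ (using $\epsilon\neq0$, that $(\sigma_{1},\hat{\sigma}^{+})\neq0$ by transversality, and that $d_{X}\sigma_{2}\notin f$), a contradiction. Hence $\{\hat{s}^{+},\hat{s}^{-}\}$ is exactly the pair of isothermic sphere congruences of $\hat{f}$, and the $\Gamma$-formula is the instance of Proposition~\ref{prop:hateta} recorded above.

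The step I expect to be the main obstacle is this identification: it is immediate that $\hat{s}^{\pm}$ is \emph{some} isothermic sphere congruence of $\hat{f}$, but pinning it down as one of the two canonical congruences of the $\Omega$-structure needs either Lemma~\ref{lem:pointiso} (through $\hat{\eta}^{\pm}\in[\hat{\eta}^{mid}]$) or a direct verification that $\zeta_{\hat{q}}(\hat{s}^{\pm},\hat{s}^{\pm})=0$ in the style of Lemma~\ref{lem:darbmid}, along with the transversality inputs $\hat{s}^{\pm}\neq s_{0}$ and $\hat{s}^{+}\neq\hat{s}^{-}$.
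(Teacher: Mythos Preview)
Your proof is correct and follows the paper's own approach: apply Proposition~\ref{prop:hateta} with the isothermic potential $\eta^{\pm}$ and use the splitting $\eta=\eta_{0}+\eta_{s}$ from its proof to conclude $\hat{\eta}^{\pm}\in\Omega^{1}(\hat{s}^{\pm}\wedge\hat{f}^{\perp})$, exactly as in the paragraph preceding the proposition. You additionally supply two points the paper leaves implicit---the identification of $\hat{s}^{\pm}$ with the canonical isothermic sphere congruences of $\hat{f}$ via Lemma~\ref{lem:pointiso} (using $\hat{\eta}^{\pm}\in[\hat{\eta}^{mid}]$), and the distinctness $\hat{s}^{+}\neq\hat{s}^{-}$ via your transversality computation---both of which are correct and arguably needed for the statement as phrased.
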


Proposition~\ref{prop:darbiso} shows that Darboux transforms of Lie applicable surfaces are induced by the Darboux transforms of their isothermic sphere congruences~\cite{BDPP2011,H2003}. On the other hand, given a Darboux transform $\hat{s}^{+}$ of one of the isothermic sphere congruence, say $s^{+}$, we have that $\hat{f}:= s_{0}\oplus \hat{s}^{+}$, where $s_{0}=f\cap (\hat{s}^{+})^{\perp}$, is a Darboux transform of $f$. This is our justification for using the term ``Darboux transform" instead of ``B\"acklund transform".

We now give a result concerning the lines joining ``opposite" isothermic sphere congruences:
\begin{proposition}
Let $l_{1}= s^{+}\oplus \hat{s}^{-}$ and $l_{2}= s^{-}\oplus \hat{s}^{+}$. Then $l_{1}\cap l_{2}=s_{\infty}$. 
\end{proposition}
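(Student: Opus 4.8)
The plan is to show that $s_{\infty}$ is contained in both $l_{1}$ and $l_{2}$ and that $l_{1}\neq l_{2}$; since $l_{1}$ and $l_{2}$ are then rank $2$ subbundles of the rank $3$ bundle $f+\hat{f}$ with $l_{1}+l_{2}=f+\hat{f}$, this forces $l_{1}\cap l_{2}$ to be a rank $1$ subbundle, hence equal to $s_{\infty}$. Throughout I take $\epsilon\neq 0$ (so that $s^{+}\neq s^{-}$; the case $\epsilon=0$ being degenerate for a statement about \emph{opposite} isothermic sphere congruences) and keep the standing assumption that $f$ and $\hat{f}$ are umbilic-free, which by the umbilic analysis of Darboux transforms guarantees that $s_{0}=f\cap\hat{f}$ coincides with none of $s^{\pm},\hat{s}^{\pm}$; in particular $s^{+}\cap\hat{s}^{-}\le f\cap\hat{f}=s_{0}$ is trivial, so $l_{1}=s^{+}\oplus\hat{s}^{-}$ really is a rank $2$ subbundle, and likewise $l_{2}$.

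I would work with the Christoffel dual lifts $\sigma^{\pm}$ of $s^{\pm}$ from Proposition~\ref{prop:chrlifts}, so that $\eta^{\pm}=\sigma^{\pm}\wedge d\sigma^{\mp}$, and choose parallel sections $\hat{\sigma}^{\pm}\in\Gamma\hat{s}^{\pm}$ of $d+m\eta^{\pm}$ as in Proposition~\ref{prop:darbiso}. The crux is that the section
\[ \rho:=m(\sigma^{-},\hat{\sigma}^{-})\,\sigma^{+}+\hat{\sigma}^{-}\in\Gamma l_{1} \]
satisfies $\langle\rho\rangle^{(1)}\le f+\hat{f}$, and therefore $\langle\rho\rangle=s_{\infty}$ by the uniqueness of the enveloping point noted before Definition~\ref{def:envpoint}. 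To see this, use that $\hat{\sigma}^{-}$ is parallel for $d+m\eta^{-}$: applying $(a\wedge b)c=(a,c)b-(b,c)a$,
\[ d\hat{\sigma}^{-}=-m(\sigma^{-}\wedge d\sigma^{+})\hat{\sigma}^{-}=-m(\sigma^{-},\hat{\sigma}^{-})\,d\sigma^{+}+m(d\sigma^{+},\hat{\sigma}^{-})\,\sigma^{-}, \]
so the $d\sigma^{+}$ term is exactly cancelled when one differentiates $\rho$, leaving $d\rho=m\,d(\sigma^{-},\hat{\sigma}^{-})\,\sigma^{+}+m(d\sigma^{+},\hat{\sigma}^{-})\,\sigma^{-}\in\Omega^{1}(f)$. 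Since $\rho$ takes values in $f+\hat{f}$ and is nowhere zero (its $\hat{\sigma}^{-}$ component does not lie in $f$, as $\hat{s}^{-}\not\le s_{0}$) while $d\rho\in\Omega^{1}(f)\subseteq\Omega^{1}(f+\hat{f})$, we conclude $s_{\infty}=\langle\rho\rangle\le l_{1}$. Interchanging $+\leftrightarrow -$ throughout swaps $\eta^{+}\leftrightarrow\eta^{-}$, $\hat{s}^{+}\leftrightarrow\hat{s}^{-}$ and $l_{1}\leftrightarrow l_{2}$, so the identical computation with $m(\sigma^{+},\hat{\sigma}^{+})\,\sigma^{-}+\hat{\sigma}^{+}\in\Gamma l_{2}$ gives $s_{\infty}\le l_{2}$.

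Finally I would check $l_{1}\neq l_{2}$: each $l_{i}$ contains a section outside $f$, so $l_{i}\neq f$ and hence $l_{i}\cap f$ is a line, necessarily $s^{+}$ for $l_{1}$ and $s^{-}$ for $l_{2}$; since $s^{+}\neq s^{-}$, $l_{1}\neq l_{2}$, so $l_{1}+l_{2}=f+\hat{f}$ has rank $3$ and $l_{1}\cap l_{2}$ has rank $1$. Together with $s_{\infty}\le l_{1}\cap l_{2}$ this yields $l_{1}\cap l_{2}=s_{\infty}$. I expect the only delicate point to be the bookkeeping behind ``$\hat{\sigma}^{-}$ is a parallel section of $d+m\eta^{-}$ spanning the isothermic sphere congruence $\hat{s}^{-}$ of $\hat{f}$'' — this is precisely Proposition~\ref{prop:darbiso} and the discussion preceding it, once $\eta^{-}$ is written in the Christoffel-dual form $\sigma^{-}\wedge d\sigma^{+}$ — together with the repeated elementary use that $s_{0}$ never coincides with any of $s^{\pm}$ or $\hat{s}^{\pm}$; the derivative computation itself, once the candidate $\rho$ is written down, is a one-line cancellation.
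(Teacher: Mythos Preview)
Your argument is correct and is essentially identical to the paper's: both use the Christoffel dual lifts $\eta^{-}=\sigma^{-}\wedge d\sigma^{+}$ together with the parallel section $\hat{\sigma}^{-}$ of $d+m\eta^{-}$ to produce the candidate $\rho=\hat{\sigma}^{-}+m(\sigma^{-},\hat{\sigma}^{-})\sigma^{+}\in\Gamma l_{1}$ with $d\rho\in\Omega^{1}(f)$, then appeal to uniqueness of the enveloping point and symmetry for $l_{2}$. Your additional check that $l_{1}\neq l_{2}$ (forcing the intersection to have rank exactly one) is a welcome bit of rigor that the paper leaves implicit in ``the result follows''.
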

\begin{proof}
By Proposition~\ref{prop:chrlifts}, $\eta^{-}=\sigma^{-}\wedge d\sigma^{+}$, where $\sigma^{\pm}$ are Christoffel dual lifts of $s^{\pm}$. By Proposition~\ref{prop:darbiso}, there exists $\hat{\sigma}\in\Gamma \hat{s}^{-}$ such that $\hat{\sigma}$ is a parallel section of $d+m\eta^{-}$. Thus, 
\[ d\hat{\sigma}= -m(\sigma^{-},\hat{\sigma})d\sigma^{+}\, \bmod\, \Omega^{1}(f).\] 
Hence, $\sigma_{\infty}:= \hat{\sigma} + m(\sigma^{-},\hat{\sigma})\sigma^{+}\in \Gamma l_{1}$ and satisfies $d\sigma_{\infty}\in\Omega^{1}(f+\hat{f})$. Since $s_{\infty}$  is the unique point in $f+\hat{f}$ satisfying $s_{\infty}^{(1)}\le f+\hat{f}$, we have that $\sigma_{\infty}\in \Gamma s_{\infty}$. Therefore, $s_{\infty}\le l_{1}$. Similarly, $s_{\infty}\le l_{2}$ and the result follows. 
\end{proof}

\section{Associate surfaces}
\label{sec:asssurf}
Let us recall the definition of $O$-surfaces given in~\cite{KS2003}: suppose that $x^{1},...,x^{n}:\Sigma\to\mathbb{R}^{3}$ are Combescure transformations\footnote{That is, the curvature directions of $x^{i}$ are parallel to the curvature directions of $x^{j}$ for all $i,j\in\{1,...,n\}$.} of each other and let the subbundles $T_{1},T_{2}\le T\Sigma$ denote the induced curvature subbundles on $T\Sigma$. Let $\kappa_{1}^{i}$ and $\kappa_{2}^{i}$ denote the principal curvatures of $x^{i}$ along $T_{1}$ and $T_{2}$, respectively, and define row vectors
\[ K_{j}:=(1/\kappa_{j}^{1},...,1/\kappa_{j}^{n}),\]
for $j\in\{1,2\}$. Then we say that $\{x^{1},...,x^{n}\}$ is a system of $O$-surfaces if there exists a constant symmetric $n\times n$ matrix $S$ such that 
\[ K_{1}SK^{t}_{2}=0.\]
In this section we shall see how a system of $O$-surfaces arises from an $\Omega$-surface. 

In \cite{D1911iii}, Demoulin defines an associate surface of an umbilic-free $\Omega$-surface: suppose that $x:\Sigma\to\mathbb{R}^{3}$ is an $\Omega$-surface and in terms of curvature line coordinates $(u,v)$ the third fundamental form of $x$ is given by $I\!I\!I=p^{2} du^{2} + r^{2} dv^{2}$. Then there exists a Combescure transformation $x^{D}:\Sigma\to \mathbb{R}^{3}$ of $x$ and there exist functions $U$ of $u$ and $V$ of $v$ such that
\begin{eqnarray}
\label{eqn:assocsurf}
\left(\frac{1}{\kappa_{1}} - \frac{1}{\kappa_{2}}\right)\left(\frac{1}{\kappa^{D}_{1}} - \frac{1}{\kappa^{D}_{2}}\right) = 
-\epsilon^{2}\frac{U^{2}}{p^{2}} +\frac{V^{2}}{r^{2}},
\end{eqnarray}
where $\kappa_{1}$ and $\kappa_{2}$ denote the principal curvatures of $x$, $\kappa^{D}_{1}$ and $\kappa^{D}_{2}$ denote the principal curvatures of $x^{D}$ and $\epsilon\in\{1,i\}$. Conversely, if two surfaces are in such a relation then they are $\Omega$-surfaces. 

Suppose that $f:\Sigma \to \mathcal{Z}$ is an umbilic-free $\Omega$-surface. Then there exists a closed 1-form $\eta\in\Omega^{1}(f\wedge f^{\perp})$ such that the quadratic differential associated to $\eta$ is non-degenerate. Let $\mathfrak{q}_{\infty}$ and $\mathfrak{p}$ be a space form vector and point sphere complex with $|\mathfrak{q}_{\infty}|^{2}=0$ and $|\mathfrak{p}|^{2}=-1$, i.e., 
\[ \mathfrak{Q}^{3}:=\{y\in\mathcal{L}:(y,\mathfrak{q}_{\infty})=-1, (y,\mathfrak{p})=0\}\]
has sectional curvature $\kappa=0$ and $\mathfrak{Q}^{3}\cong \mathbb{R}^{3}$. Then we may choose a null vector $\mathfrak{q}_{0}\in \langle \mathfrak{p}\rangle^{\perp}$ such that $(\mathfrak{q}_{0},\mathfrak{q}_{\infty})=-1$. Thus $\langle \mathfrak{q}_{\infty},\mathfrak{p},\mathfrak{q}_{0}\rangle^{\perp}\cong \mathbb{R}^{3}$ and we have an isometry 
\[\phi:\langle \mathfrak{q}_{\infty},\mathfrak{p},\mathfrak{q}_{0}\rangle^{\perp}\to \mathfrak{Q}^{3},\quad x\mapsto x + \mathfrak{q}_{0} + \frac{1}{2}(x,x)\mathfrak{q}_{\infty}.\]
We can use this to identify $\mathfrak{f}:= f\cap\mathfrak{Q}^{3}$ with $x:\Sigma\to\mathbb{R}^{3}$. We then have that $d\mathfrak{f} = dx + (dx,x)\mathfrak{q}_{\infty}$ and $\mathfrak{t} = n + (n,x)\mathfrak{q}_{\infty} + \mathfrak{p}$. 

Now $(\eta \mathfrak{p},\mathfrak{q}_{\infty})$ is a closed 1-form, so there exists (up to addition of a constant) $\lambda:\Sigma\to \mathbb{R}$ such that $d\lambda = (\eta \mathfrak{p},\mathfrak{q}_{\infty})$. Then we may gauge $\eta$ by $\tau:= - \lambda\mathfrak{f}\wedge \mathfrak{t}$ to obtain $\tilde{\eta}:=\eta-d\tau$ with $(\tilde{\eta}\mathfrak{p},\mathfrak{q}_{\infty})=0$. Therefore, we shall assume that $(\eta\mathfrak{p},\mathfrak{q}_{\infty})=0$. From this we can deduce that $\eta$ is of the form 
\[ \eta = \mathfrak{f} \wedge d\mathfrak{f}\circ A + \mathfrak{t}\wedge d\mathfrak{t}\circ B,\]
for some $A,B\in \Gamma End(T\Sigma)$. The closure of $\eta$ implies that $\eta\mathfrak{q}_{\infty} = -d\mathfrak{f}\circ A$ and $\eta\mathfrak{p}=-d\mathfrak{t}\circ B$ are closed and that
\begin{eqnarray}
\label{eqn:assurf} d\mathfrak{f}\curlywedge d\mathfrak{f}\circ A + d\mathfrak{t}\curlywedge d\mathfrak{t}\circ B=0.
\end{eqnarray}
The closure of $d\mathfrak{f}\circ A$ implies that $dx\circ A$ is closed. Furthermore, by Lemma~\ref{prop:mceqn}, we have that $\eta(T_{i})\le f\wedge f_{i}$ and thus 
\[ A\in \Gamma(T_{1}^{*}\otimes T_{1} \oplus T_{2}^{*}\otimes T_{2}).\]
Therefore, locally there exists $x^{D}:\Sigma\to \mathbb{R}^{3}$ such that $dx^{D} = dx\circ A$ and $x^{D}$ has parallel curvature directions to $x$. Similarly there exists $\hat{x}:\Sigma\to \mathbb{R}^{3}$ such that $d\hat{x} = dn\circ B$ and $B\in\Gamma(T_{1}^{*}\otimes T_{1} \oplus T_{2}^{*}\otimes T_{2})$. Thus, $\hat{x}$ also has parallel curvature directions to $x$. From Equation~(\ref{eqn:assurf}) and Rodrigues' equations, we can then deduce that 
\begin{eqnarray}
\label{eqn:asscurv}
\frac{1}{\kappa_{1}\kappa^{D}_{2}} +  \frac{1}{\kappa_{2}\kappa^{D}_{1}} -  \frac{1}{\hat{\kappa}_{1}} - \frac{1}{\hat{\kappa}_{2}}=0.
\end{eqnarray}

Conversely, given Combescure transformations $x^{D}$ and $\hat{x}$ of $x$ such that~(\ref{eqn:asscurv}) is satisfied we may define a closed 1-form
\[ \eta = \mathfrak{f}\wedge (dx^{D} + (dx^{D},x)\mathfrak{q}_{\infty}) + \mathfrak{t}\wedge (d\hat{x} + (d\hat{x}, x)\mathfrak{q}_{\infty}).\]
Hence, we have arrived at the following result:

\begin{theorem}
\label{thm:asssurf}
An umbilic-free surface $x:\Sigma\to\mathbb{R}^{3}$ is an $\Omega$-surface if and only if there exists an associate surface $x^{D}:\Sigma\to\mathbb{R}^{3}$ and an associate Gauss map $\hat{x}:\Sigma\to\mathbb{R}^{3}$ that are Combescure transformations of $x$ such that the principal curvatures of $x$, $x^{D}$ and $\hat{x}$ satisfy (\ref{eqn:asscurv}).
\end{theorem}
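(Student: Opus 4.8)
The plan is to assemble the computation sketched in the paragraphs preceding the statement into a clean two-directional argument; the geometric content is entirely the dictionary between the closed $1$-form $\eta$ of Definition~\ref{def:lieapp} and a triple of mutually Combescure surfaces in $\mathbb{R}^{3}$.

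\emph{Forward direction.} Given an umbilic-free $\Omega$-surface $x$, pass to its Legendre lift $f=\langle\mathfrak{f},\mathfrak{t}\rangle$ and choose a closed $\eta\in\Omega^{1}(f\wedge f^{\perp})$ whose quadratic differential $q$ is non-degenerate on a dense open set. Fix the flat space-form data $\mathfrak{q}_{\infty},\mathfrak{p},\mathfrak{q}_{0}$ and the isometry $\phi$ identifying the ambient $\mathbb{R}^{3}$ with $\langle\mathfrak{q}_{\infty},\mathfrak{p},\mathfrak{q}_{0}\rangle^{\perp}$, as above, so that $\mathfrak{f}=\phi\circ x$ and $\mathfrak{t}=n+(n,x)\mathfrak{q}_{\infty}+\mathfrak{p}$. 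The first substantive step is to exploit the gauge freedom inside $[\eta]$: since $(\eta\mathfrak{p},\mathfrak{q}_{\infty})$ is closed we may subtract $d(\lambda\,\mathfrak{f}\wedge\mathfrak{t})$ with $d\lambda=(\eta\mathfrak{p},\mathfrak{q}_{\infty})$ and thereby assume $(\eta\mathfrak{p},\mathfrak{q}_{\infty})=0$. Using $f^{\perp}=f^{(1)}$ (so that $f^{\perp}/f$ is spanned by the image of $d\mathfrak{f}$), Rodrigues' equations $\mathfrak{t}_{u}=-\kappa_{1}\mathfrak{f}_{u}$, $\mathfrak{t}_{v}=-\kappa_{2}\mathfrak{f}_{v}$, and this normalisation, one then checks that $\eta=\mathfrak{f}\wedge d\mathfrak{f}\circ A+\mathfrak{t}\wedge d\mathfrak{t}\circ B$ for some $A,B\in\Gamma\,\mathrm{End}(T\Sigma)$. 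Evaluating on $\mathfrak{q}_{\infty}$ and on $\mathfrak{p}$ and using $d\mathfrak{q}_{\infty}=d\mathfrak{p}=0$ together with $(\mathfrak{f},\mathfrak{q}_{\infty})=-1=(\mathfrak{t},\mathfrak{p})$ and $(\mathfrak{t},\mathfrak{q}_{\infty})=0=(\mathfrak{f},\mathfrak{p})$ shows that $\eta\mathfrak{q}_{\infty}=-d\mathfrak{f}\circ A$ and $\eta\mathfrak{p}=-d\mathfrak{t}\circ B$; closure of $\eta$ then gives that each of these is closed and that
\[ d\mathfrak{f}\curlywedge d\mathfrak{f}\circ A+d\mathfrak{t}\curlywedge d\mathfrak{t}\circ B=0. \]

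Next, $d\mathfrak{f}\circ A$ closed forces $d(dx\circ A)=0$, and by Proposition~\ref{prop:mceqn} we have $\eta(T_{i})\le f\wedge f_{i}$; since $f_{1}/f$ and $f_{2}/f$ are precisely the two rank-one subbundles of $f^{\perp}/f$ spanned by $d\mathfrak{f}(T_{1})$ and $d\mathfrak{f}(T_{2})$ modulo $f$, this forces $A$, and likewise $B$, to preserve $T_{1}$ and $T_{2}$, so $A,B\in\Gamma(T_{1}^{*}\otimes T_{1}\oplus T_{2}^{*}\otimes T_{2})$. Hence we may locally integrate to Combescure transforms $x^{D}$ and $\hat{x}$ of $x$ with $dx^{D}=dx\circ A$ and $d\hat{x}=dn\circ B$; $x^{D}$ is the associate surface and $\hat{x}$ the associate Gauss map. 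Feeding Rodrigues' equations into the displayed identity, comparing the $du^{2}$- and $dv^{2}$-components, and rearranging then produces exactly~(\ref{eqn:asscurv}). For the converse, given Combescure transforms $x^{D},\hat{x}$ of $x$ satisfying~(\ref{eqn:asscurv}), put $\eta:=\mathfrak{f}\wedge(dx^{D}+(dx^{D},x)\mathfrak{q}_{\infty})+\mathfrak{t}\wedge(d\hat{x}+(d\hat{x},x)\mathfrak{q}_{\infty})$; the second entries of these wedges are orthogonal to both $\mathfrak{f}$ and $\mathfrak{t}$, so $\eta\in\Omega^{1}(f\wedge f^{\perp})$, while~(\ref{eqn:asscurv}) together with the Combescure property and $d(dx^{D})=d(d\hat{x})=0$ forces $d\eta=0$; non-degeneracy of $q$ follows from $x^{D}$ (or $\hat{x}$) being an immersion. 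Thus $f$, equivalently $x$, is an $\Omega$-surface.

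\emph{Main obstacle.} The delicate part is the bookkeeping in the two middle steps: verifying that the single scalar normalisation $(\eta\mathfrak{p},\mathfrak{q}_{\infty})=0$ is exactly enough to put $\eta$ in the form $\mathfrak{f}\wedge d\mathfrak{f}\circ A+\mathfrak{t}\wedge d\mathfrak{t}\circ B$ with no residual $\mathfrak{f}\wedge\mathfrak{t}$-term, and then tracking Rodrigues' equations through the closure identity so that $\epsilon^{2}$, the functions $U,V$, and the principal curvatures combine into precisely~(\ref{eqn:asscurv}) rather than some equivalent but differently normalised relation; in the converse direction one must also argue that the resulting $q$ is non-degenerate and not identically zero, which is what distinguishes an $\Omega$-surface from a degenerate Lie-applicable surface.
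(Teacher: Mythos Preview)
Your proposal is correct and follows essentially the same route as the paper: gauge so that $(\eta\mathfrak{p},\mathfrak{q}_{\infty})=0$, read off the endomorphisms $A,B$, use Proposition~\ref{prop:mceqn} to see they are curvature-diagonal, integrate to Combescure transforms $x^{D},\hat{x}$, and extract~(\ref{eqn:asscurv}) from the closure identity~(\ref{eqn:assurf}); the converse is likewise the paper's explicit formula for $\eta$. One small slip: the identity $d\mathfrak{f}\curlywedge d\mathfrak{f}\circ A+d\mathfrak{t}\curlywedge d\mathfrak{t}\circ B=0$ is a $2$-form equation, so you are comparing the single $du\wedge dv$-coefficient (a bivector in $\wedge^{2}\mathbb{R}^{4,2}$), not ``$du^{2}$- and $dv^{2}$-components''; and note that the paper, like you, does not spell out the non-degeneracy of $q$ in the converse---your claim that it follows from $x^{D}$ being an immersion is not quite sufficient on its own, since $q=-(dx,dx^{D})-(dn,d\hat{x})$ could in principle degenerate by cancellation.
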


\begin{remark}
We shall assume that $x^{D}$ and $\hat{x}$ are oriented so that the Gauss map of these surfaces is $-n$.
\end{remark}

\begin{remark}
The addition of a constant $c$ to $\lambda$ sends 
\[ x^{D}\mapsto x^{D} +cn \quad \text{and}\quad \hat{x}\mapsto\hat{x}-cx. \]
Thus we get a parallel surface to $x^{D}$. The behaviour of $\hat{x}$ under this change is our motivation for calling $\hat{x}$ an associate Gauss map. 
\end{remark}
By letting 
\[ S = \begin{pmatrix}
0 & 1 & 0 & 0\\
1 & 0 & 0 & 0\\
0 & 0 & 0 & 1\\
0 & 0 & 1 & 0
\end{pmatrix}
\]
one can see that condition~(\ref{eqn:asscurv}) shows that $\{x, x^{D},\hat{x},n\}$ is a system of $O$-surfaces, where we consider the Gauss map $n$ to be oriented so that its principal curvatures are both $-1$.

We have that the quadratic form of $\eta$ is given by
\[ q = -(dx, dx^{D}) - (dn,d\hat{x}).\]
On the other hand, in terms of curvature line coordinates $(u,v)$, we have that 
\[ q = -\epsilon^{2}U^{2}du^{2} + V^{2}dv^{2},\]
for some functions $U$ of $u$ and $V$ of $v$. Hence,
\[ - \epsilon^{2} U^{2} = \left(\frac{1}{\kappa_{1}\kappa^{D}_{1}} - \frac{1}{\hat{\kappa}_{1}}\right)p^{2}\quad \text{and} \quad V^{2} =  \left(\frac{1}{\kappa_{2}\kappa^{D}_{2}} - \frac{1}{\hat{\kappa}_{2}}\right)r^{2}\]
and thus 
\[ \frac{1}{\hat{\kappa}_{1}} = \frac{\epsilon^{2}U^{2}}{p^{2}} + \frac{1}{\kappa_{1}\kappa^{D}_{1}}\quad \text{and} \quad \frac{1}{\hat{\kappa}_{2}} = -\frac{V^{2}}{r^{2}} + \frac{1}{\kappa_{2}\kappa^{D}_{2}}.\]
Then substituting this into~(\ref{eqn:asscurv}) yields~(\ref{eqn:assocsurf}). Hence, $x^{D}$ is an associate surface of $x$, in the sense of~\cite{D1911iii}.

We may write $\eta$ as 
\[ \eta = Ad_{\exp(x\wedge \mathfrak{q}_{\infty})}(\mathfrak{q}_{0}\wedge dx^{D}+ \xi\wedge d\hat{x}),\]
where $\xi:= n+\mathfrak{p}$. By the symmetry of Equation~(\ref{eqn:asscurv}), $x^{D}$ is an $\Omega$-surface with closed 1-form 
\[ \eta^{D} :=  Ad_{\exp(x^{D}\wedge \mathfrak{q}_{\infty})}(\mathfrak{q}_{0}\wedge dx - \xi^{D}\wedge d\hat{x}),\]
where $\xi^{D}:=-n+\mathfrak{p}$. Furthermore, the quadratic differential $q^{D}$ defined by $\eta^{D}$ agrees with $q$. 

\begin{theorem}
An associate surface of an $\Omega$-surface is itself an $\Omega$-surface.
\end{theorem}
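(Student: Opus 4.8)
The plan is to show that the explicit $1$-form $\eta^{D}$ exhibited just above the statement genuinely certifies that $x^{D}$ is an $\Omega$-surface, i.e., that it satisfies the hypotheses of Definition~\ref{def:lieapp} with $q^{D}$ non-degenerate. Most of the work has already been staged: we have a candidate closed $1$-form
\[ \eta^{D} =  Ad_{\exp(x^{D}\wedge \mathfrak{q}_{\infty})}(\mathfrak{q}_{0}\wedge dx - \xi^{D}\wedge d\hat{x}), \]
and it has been asserted that $q^{D}=q$, which is non-degenerate by hypothesis (since $x$ is an $\Omega$-surface, not merely $\Omega_{0}$). So the remaining points to verify are: (i) $\eta^{D}$ takes values in $f^{D}\wedge (f^{D})^{\perp}$, where $f^{D}=\langle \mathfrak{f}^{D},\mathfrak{t}^{D}\rangle$ is the Legendre lift of $x^{D}$ with Gauss map $-n$; (ii) $\eta^{D}$ is closed; (iii) $[\eta^{D}\wedge\eta^{D}]=0$; and (iv) $q^{D}$ is non-zero (indeed non-degenerate).

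First I would set up the Legendre lift of $x^{D}$ in the same flat model: with space form vector $\mathfrak{q}_{\infty}$ and point-sphere complex $\mathfrak{p}$, the space-form projection is $\mathfrak{f}^{D}=x^{D}+\mathfrak{q}_{0}+\tfrac12(x^{D},x^{D})\mathfrak{q}_{\infty}$ and the tangent plane congruence is $\mathfrak{t}^{D}=-n+(-n,x^{D})\mathfrak{q}_{\infty}+\mathfrak{p}$ (using the orientation convention from the preceding remark so that the Gauss map of $x^{D}$ is $-n$). Noting $\mathfrak{f}^{D}=Ad_{\exp(x^{D}\wedge\mathfrak{q}_{\infty})}\mathfrak{q}_{0}$ and $\mathfrak{t}^{D}=Ad_{\exp(x^{D}\wedge\mathfrak{q}_{\infty})}\xi^{D}$ with $\xi^{D}=-n+\mathfrak{p}$, we see that $\mathfrak{q}_{0}\wedge dx$ and $\xi^{D}\wedge d\hat{x}$, after conjugation by $\exp(x^{D}\wedge\mathfrak{q}_{\infty})$, land in $f^{D}\wedge(f^{D})^{(1)}=f^{D}\wedge(f^{D})^{\perp}$ — here one uses that $dx$, $d\hat{x}$ and $dn$ are all orthogonal to both $\mathfrak{f}^{D}$ and $\mathfrak{t}^{D}$ (the normal $n$ is common), exactly as in the construction of $\eta$ for $x$. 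This gives (i).

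For (ii) and (iii): since conjugation by a fixed (non-constant) gauge does not automatically preserve closedness, I would instead argue directly from the structure of $\eta^{D}$, mirroring the computation done for $\eta$ earlier in Section~\ref{sec:asssurf}. Writing $\eta^{D}=\mathfrak{f}^{D}\wedge dx^{\star}+\mathfrak{t}^{D}\wedge d\hat{x}^{\star}$ for suitable Combescure data (here the roles are played by $x$ and $-\hat{x}$, reparametrised), the closure reduces — via $d\mathfrak{f}^{D}\curlywedge d\mathfrak{f}^{D}$, $d\mathfrak{t}^{D}\curlywedge d\mathfrak{t}^{D}\in\Omega^{2}(f^{D}\wedge(f^{D})^{\perp})$ and Rodrigues' equations — to the single scalar identity obtained from~(\ref{eqn:asscurv}) by the substitution $\kappa_{i}\leftrightarrow\kappa^{D}_{i}$, $\hat{\kappa}_{i}\leftrightarrow\hat{\kappa}_{i}$, which is~(\ref{eqn:asscurv}) itself because that equation is symmetric in $\kappa_{i}$ and $\kappa^{D}_{i}$. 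The same bidegree bookkeeping that proved $[\eta\wedge\eta]=0$ (Proposition~\ref{prop:mceqn}: $\eta^{D}(T_{i})\le f^{D}\wedge f^{D}_{i}$ once closedness is known, hence the brackets of the two partial components vanish) gives (iii). Finally (iv): computing $q^{D}$ from $\eta^{D}$ as $q^{D}(X,Y)=\mathrm{tr}(\sigma\mapsto\eta^{D}(X)d_{Y}\sigma)$ on the basis $\{\mathfrak{f}^{D},\mathfrak{t}^{D}\}$ of $f^{D}$ yields $q^{D}=-(dx^{D},dx)-(dn,d\hat{x})$, which is literally the formula $q=-(dx,dx^{D})-(dn,d\hat{x})$ established for $x$; hence $q^{D}=q$ is non-degenerate and in particular non-zero. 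Therefore $\eta^{D}$ satisfies all the conditions of Definition~\ref{def:lieapp} with $q^{D}$ non-degenerate on a dense open set, so $x^{D}$ is an $\Omega$-surface.

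The main obstacle I anticipate is bookkeeping the orientation/sign conventions so that $q^{D}=q$ comes out exactly (the earlier remarks fix $x^{D}$ and $\hat{x}$ to have Gauss map $-n$, and $\xi^{D}=-n+\mathfrak{p}$ flips sign relative to $\xi=n+\mathfrak{p}$, so the cross-terms must be tracked carefully); and verifying that the conjugation $Ad_{\exp(x^{D}\wedge\mathfrak{q}_{\infty})}$ does not spoil closedness — which is why I prefer to re-derive closedness from~(\ref{eqn:asscurv}) directly rather than transport it through the gauge. Both are routine once the set-up is in place; the conceptual content is entirely the symmetry of~(\ref{eqn:asscurv}) in $x$ and $x^{D}$, already noted in the text.
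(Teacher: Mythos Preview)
Your proposal is correct and is essentially the paper's own argument, only spelled out in full: the paper's proof consists precisely of the sentence preceding the theorem, which invokes the symmetry of~(\ref{eqn:asscurv}) to apply the converse direction of Theorem~\ref{thm:asssurf} to $x^{D}$ (with $x$ as its associate surface), and exhibits the explicit closed $1$-form $\eta^{D}$ together with the observation $q^{D}=q$. Your steps (i)--(iv) are exactly the verifications that the paper is tacitly appealing to when it writes ``by the symmetry of Equation~(\ref{eqn:asscurv})''.
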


\appendix
\section{The quadratic differential}
In this appendix we prove some facts about the quadratic differential that arises in the definition of Lie applicability (see Definition~\ref{def:lieapp}) in order to prove Theorem~\ref{thm:invariant}. In particular, we shall characterise the condition that a quadratic differential is divergence-free in terms of certain special lifts of the curvature spheres. 

Suppose that $f:\Sigma\to \mathcal{Z}$ is an umbilic-free Legendre immersion and let $q\in \Gamma ((T_{1}^{*})^{2}\oplus (T_{2}^{*})^{2})$ be a quadratic differential. Then for any representative metric $g$ of the conformal structure $c$, there exists a symmetric trace free endomorphism $Q\in \Gamma\textrm{End}(T\Sigma)$ such that 
\[ q(X,Y) = g(X, Q(Y)),\]
for any $X,Y\in \Gamma T\Sigma$.

Now the conformal structure $c$ gives rise to a product structure $J$ which acts as $id$ on $T_{1}$ and $-id$ on $T_{2}$. Since the Hodge star operator $\star$ induced by $c$ acts as $id$ on $T^{*}_{1}$ and $-id$ on $T^{*}_{2}$ we have, for any $\alpha\in \Gamma\textrm{End}(T\Sigma)$, 
\[ \star \alpha = \alpha\circ J.\]
One then deduces the following lemma:

\begin{lemma}
\label{lem:hodgeQ}
$Q\in\Gamma End(T\Sigma)$ is trace-free and symmetric with respect to $c$ if and only if $\star Q = - J \circ Q$.
\end{lemma}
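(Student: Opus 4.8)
The plan is to unwind the two conditions on $Q$ into components relative to the splitting $T\Sigma = T_1 \oplus T_2$ and check that they match. Since $c$ acts with $T_1$ and $T_2$ as its null lines, a representative metric $g$ satisfies $g(T_1,T_1) = g(T_2,T_2) = 0$. Writing $J$ for the product structure ($+1$ on $T_1$, $-1$ on $T_2$) and recalling from the excerpt that $\star$ acts as $+\mathrm{id}$ on $T_1^*$ and $-\mathrm{id}$ on $T_2^*$, we have the pointwise identity $\star\alpha = \alpha \circ J$ for $\alpha \in \Gamma\mathrm{End}(T\Sigma)$ (viewing $\alpha$ as a $(1,1)$-tensor and $\star$ acting on the covariant slot). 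So the claimed equivalence is that $Q$ is trace-free and $g$-symmetric if and only if $Q\circ J = -J\circ Q$, i.e. $Q$ anticommutes with $J$.

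First I would observe that $Q$ anticommutes with $J$ precisely when $Q$ interchanges the two eigenbundles $T_1$ and $T_2$ of $J$: if $X \in \Gamma T_1$ then $J(Q X) = -Q(JX) = -QX$, forcing $QX \in \Gamma T_2$, and symmetrically $Q(\Gamma T_2) \subseteq \Gamma T_1$. Conversely, if $Q$ swaps $T_1$ and $T_2$ then on $T_1$ we get $QJ = Q = -JQ$ and on $T_2$ likewise, so $QJ = -JQ$. Thus the condition $\star Q = -J\circ Q$ is equivalent to $Q$ having no $T_1 \to T_1$ or $T_2 \to T_2$ component, i.e. $Q \in \Gamma(T_1^*\otimes T_2 \oplus T_2^*\otimes T_1)$.

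Next I would show this off-diagonal condition is equivalent to $Q$ being trace-free and $g$-symmetric. For the forward direction: if $Q$ swaps $T_1$ and $T_2$ then its matrix in any frame adapted to $T_1\oplus T_2$ is purely off-diagonal, so $\operatorname{tr} Q = 0$; and $g$-symmetry follows because $g(QX,Y)$ and $g(X,QY)$ both vanish unless one argument is in $T_1$ and the other in $T_2$, in which case one computes directly (using a null frame $e_1 \in T_1$, $e_2 \in T_2$ with $g(e_1,e_2)=1$) that $q(e_1,e_2) = g(e_1, Qe_2)$ is automatically symmetric since $q$ itself is a symmetric quadratic differential with $q(T_1,T_2) = q(e_1,e_2)$ the only nonzero pairing — indeed this is exactly how $Q$ was defined from $q$. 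For the converse: suppose $Q$ is trace-free and $g$-symmetric. Decompose $Q = Q_{11} + Q_{12} + Q_{21} + Q_{22}$ along $\mathrm{Hom}(T_i,T_j)$. The $g$-symmetry $g(QX,Y) = g(X,QY)$ with $X,Y \in \Gamma T_1$ forces $g(Q_{12}X, Y) = 0$; since $g$ restricted to $T_2\times T_1$ is nondegenerate, $Q_{12} = 0$, and symmetrically... wait — more carefully: $g(QX,Y)$ with $X\in T_1$ has a contribution only from $Q_{12}X \in T_2$ paired against $Y\in T_1$, which is $g(Q_{12}X,Y)$; but $g(X,QY)$ with $Y\in T_1$ has contribution $g(X, Q_{12}Y)$, also landing in $g(T_1,T_2)$. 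So $g$-symmetry gives a symmetry constraint coupling $Q_{12}$ with itself, not forcing it to vanish. Instead, the vanishing of the \emph{diagonal} blocks comes from combining symmetry with trace-freeness: $g(Q_{11}X, Y)$ for $X,Y\in T_1$ lies in $g(T_1,T_1) = 0$ automatically, so $Q_{11}$ is unconstrained by symmetry directly — it is killed instead by the requirement that the induced $q$ be a quadratic differential, equivalently $q(T_1,T_1)=0$, which reads $g(X,Q_{11}X) = q(X,X) = 0$; but again this is in $g(T_1,T_1)$. The honest route, which I would take, is: $q$ being a quadratic differential means $q \in \Gamma((T_1^*)^2\oplus(T_2^*)^2)$ is already given, and $Q$ is \emph{defined} by $q(X,Y) = g(X,QY)$; one then checks $q(T_1,T_2)=0$ is false in general — $q$ is allowed a $du^2$ and a $dv^2$ part but no $du\,dv$ part — so that $g(X,QY) = 0$ whenever $X,Y$ lie in the \emph{same} $T_i$, which via nondegeneracy of $g$ on $T_i^* \times T_j^*$ ($i\ne j$) forces $QY \in T_2$ for $Y\in T_1$ and $QY\in T_1$ for $Y\in T_2$. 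This is exactly the off-diagonal condition, hence $\star Q = -J\circ Q$; and trace-freeness is then automatic. Conversely any off-diagonal $Q$ produces such a $q$.

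The main obstacle is bookkeeping the two nondegeneracy pairings correctly: $g$ vanishes on each $T_i \times T_i$ but the pairing $T_1^* \otimes T_2^*$ (equivalently $g$ restricted to mixed pairs) is nondegenerate, and it is precisely this that lets one pass between "$g(X,QY)$ vanishes on same-index pairs" and "$Q$ is off-diagonal". I expect the cleanest writeup uses a local null frame $e_1 \in \Gamma T_1$, $e_2 \in \Gamma T_2$ with $g(e_1,e_1) = g(e_2,e_2) = 0$, $g(e_1,e_2) = 1$, writes $Q e_1 = a e_1 + b e_2$, $Q e_2 = c e_1 + d e_2$, computes $q(e_i,e_j)$ and $\operatorname{tr} Q = a + d$, and reads off that "$q$ a quadratic differential" $\iff q(e_1,e_2) = 0 \iff a = d = 0$ (after also using symmetry of $q$), which is both "$Q$ off-diagonal" and "$Q$ trace-free and $g$-symmetric" and "$\star Q = -J\circ Q$" — all three coincide in the frame, so the lemma follows.
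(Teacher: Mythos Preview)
The paper gives no proof here, treating the lemma as immediate from the preceding identity $\star\alpha=\alpha\circ J$. Your null-frame computation at the end is the right verification and is correct in substance: with $Qe_1=ae_1+be_2$, $Qe_2=ce_1+de_2$ and $g(e_i,e_i)=0$, $g(e_1,e_2)=1$, one has $g$-symmetry $\Leftrightarrow a=d$ (compare $g(Qe_1,e_2)=a$ with $g(e_1,Qe_2)=d$), trace-freeness $\Leftrightarrow a+d=0$, and $QJ=-JQ$ (equivalently $\star Q=-J\circ Q$) $\Leftrightarrow a=d=0$. The conjunction of the first two is precisely the third, and that is the lemma.

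The exposition before that final paragraph, however, has genuine problems. The lemma concerns an \emph{arbitrary} $Q\in\Gamma\mathrm{End}(T\Sigma)$; nothing assumes $Q$ arises from a given quadratic differential. Your forward-direction argument that off-diagonal implies $g$-symmetric appeals to ``$q$ itself is a symmetric quadratic differential\ldots indeed this is exactly how $Q$ was defined from $q$'', which is circular in this context. (The honest check is the frame computation: once $a=d=0$, the only nontrivial symmetry condition $g(Qe_1,e_2)=g(e_1,Qe_2)$ reads $0=0$.) Your claim that ``$g(QX,Y)$ and $g(X,QY)$ both vanish unless one argument is in $T_1$ and the other in $T_2$'' is also false as stated: for $X=Y=e_1$ with $Q$ off-diagonal one gets $g(Qe_1,e_1)=b$, generically nonzero. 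Finally, in the last paragraph you phrase the equivalence as ``$q$ a quadratic differential'' $\Leftrightarrow a=d=0$ rather than ``trace-free and $g$-symmetric'' $\Leftrightarrow a=d=0$; these coincide, but you should verify the latter directly from $\mathrm{tr}\,Q=a+d$ together with the symmetry condition $a=d$, not by routing through properties of $q$. Stripped of the $q$-language the argument is a two-line linear-algebra check, which is presumably why the paper omits it.
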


\begin{corollary}
\label{cor:divergence}
Suppose that $Q\in\Gamma \textrm{End}(T\Sigma)$ is trace-free and symmetric with respect to $c$. Let $g$ be a representative metric for $c$ with induced Levi-Civita connection $\nabla$. Then $d^{\nabla}\!\star Q=0$, i.e., $Q$ is divergence-free, if and only if $d^{\nabla}Q=0$. 
\end{corollary}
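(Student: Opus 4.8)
The plan is to regard $Q$ as a $T\Sigma$-valued $1$-form, so that $d^{\nabla}Q\in\Omega^{2}(T\Sigma)$ and $\star Q$ is the $T\Sigma$-valued $1$-form $X\mapsto Q(JX)$; the corollary will then follow once we recognise that, on a trace-free symmetric $Q$, the Hodge star (acting on the form slot) can be traded for the pointwise automorphism $-J$ acting on the \emph{value} slot, and that this automorphism is $\nabla$-parallel. First, by Lemma~\ref{lem:hodgeQ}, the hypothesis that $Q$ is trace-free and symmetric with respect to $c$ gives $\star Q = -J\circ Q$, where now $J$ is applied to the values of $Q$.

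Next I would check that the Levi-Civita connection $\nabla$ of the representative metric $g$ preserves the curvature subbundles $T_{1}$ and $T_{2}$, and hence that $\nabla J=0$. This is the only computational point and it is short: each $T_{i}$ is a null line subbundle of $g$, so for $X\in\Gamma T_{i}$ one has $g(X,X)=0$, whence $0=\nabla_{Y}(g(X,X))=2g(\nabla_{Y}X,X)$ for all $Y\in\Gamma T\Sigma$; since a $1$-dimensional null subspace of the $(1,1)$-plane $(T\Sigma,g)$ is its own $g$-orthogonal complement, this forces $\nabla_{Y}X\in\Gamma T_{i}$. Thus $T\Sigma=T_{1}\oplus T_{2}$ is a $\nabla$-parallel splitting, and as $J$ is $\pm\mathrm{id}$ on its two summands it is $\nabla$-parallel.

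Finally, because $\nabla J=0$ the operator $J$ commutes through $d^{\nabla}$ in the precise sense that
\[ d^{\nabla}(-J\circ Q) = -J\circ d^{\nabla}Q, \]
which is immediate from the Koszul-type formula for $d^{\nabla}$ on vector-bundle valued forms, using $\nabla_{X}\circ J = J\circ\nabla_{X}$ termwise. Combining this with the first step yields $d^{\nabla}(\star Q) = -J\circ d^{\nabla}Q$, and since $J$ is a vector bundle automorphism ($J^{2}=\mathrm{id}$) the right-hand side vanishes if and only if $d^{\nabla}Q$ does, which is the claim.

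There is no real obstacle here; the only things to be careful about are the parallelism of $J$ in the second step and, in the third step, keeping straight the two ways $J$ and $\star$ act on a $T\Sigma$-valued form (on the domain versus on the value) — which is exactly what Lemma~\ref{lem:hodgeQ} is used to reconcile, and which is why the trace-free symmetric hypothesis is needed. As a sanity check one can instead work in conformal curvature line coordinates $(u,v)$ with $g=\phi\,du\,dv$ and $Q\partial_{u}=a\,\partial_{v}$, $Q\partial_{v}=b\,\partial_{u}$: a direct computation shows that both $d^{\nabla}Q=0$ and $d^{\nabla}\star Q=0$ are equivalent to the single pair of scalar equations $(\phi a)_{v}=0$, $(\phi b)_{u}=0$.
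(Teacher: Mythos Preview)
Your proof is correct and follows essentially the same route as the paper's: use Lemma~\ref{lem:hodgeQ} to write $\star Q=-J\circ Q$, observe that $J$ is $\nabla$-parallel, and then pass $J$ through $d^{\nabla}$ to obtain $d^{\nabla}\!\star Q=-J\circ d^{\nabla}Q$. The only difference is that you supply the short argument for $\nabla J=0$ (parallelism of the null line bundles $T_{i}$), which the paper simply asserts, and you add a coordinate sanity check.
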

\begin{proof}
Since $\nabla$ is the Levi-Civita connection for $g$, we have that $d^{\nabla}J=0$. Then, using Lemma~\ref{lem:hodgeQ} and the Leibniz rule, 
\[ d^{\nabla}\star Q = - (d^{\nabla}J)\circ Q - J\circ d^{\nabla}Q = -J\circ d^{\nabla}Q,\]
and the result follows.
\end{proof}

We say that $q$ is \textit{divergence-free with respect to $c$} if for any representative metric $g$, the endomorphism $Q\in \Gamma \textrm{End}(T\Sigma)$ defined by
\[ q(X,Y)= g(X,Q(Y))\]
is divergence-free.  

Assume that the signature of $q$ is constant over $\Sigma$. Recall from Subsection~\ref{subsec:inv} that after possibly multiplying $q$ by $\pm 1$ and reordering the curvature sphere congruences $s_{1}$ and $s_{2}$, there exists (unique up to sign) lifts $\sigma_{1}\in\Gamma s_{1}$ and $\sigma_{2}\in \Gamma s_{2}$ such that 
\[ q = -\epsilon^{2}(d\sigma_{2},d\sigma_{2}) + (d\sigma_{1},d\sigma_{1}),\]
where $\epsilon\in\{0,1,i\}$. 

\begin{lemma}
\label{app:quad}
$q$ is divergence-free with respect to the conformal structure $c$ on $T\Sigma$ if and only if $d_{1}\sigma_{1}\in\Gamma T_{1}^{*}\otimes s_{2}$ and $\epsilon^{2}d_{2}\sigma_{2}\in\Gamma T_{2}^{*}\otimes s_{1}$.
\end{lemma}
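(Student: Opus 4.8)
The plan is to work in a representative metric for $c$ and translate the divergence-free condition into a statement about the partial connections $d_i$ acting on the special lifts $\sigma_1,\sigma_2$. First I would choose curvature line coordinates $(u,v)$ (so $T_1 = \langle\partial_u\rangle$, $T_2=\langle\partial_v\rangle$) and fix the representative metric $g = 2\,du\,dv$ of $c$; then $\star$ acts as $+1$ on $du$ and $-1$ on $dv$, and $J$ acts as $+1$ on $\partial_u$ and $-1$ on $\partial_v$. With respect to $g$, the endomorphism $Q$ associated to $q = -\epsilon^2(d\sigma_2,d\sigma_2) + (d\sigma_1,d\sigma_1)$ is determined by $g(X,QY) = q(X,Y)$; writing $q = -\epsilon^2 a^2\,du^2 + b^2\,dv^2$ with $a^2 := (\sigma_{2,u},\sigma_{2,u})$ and $b^2 := (\sigma_{1,v},\sigma_{1,v})$, one finds $Q(\partial_u) = -\tfrac{\epsilon^2 a^2}{2}\partial_v$ and $Q(\partial_v) = \tfrac{b^2}{2}\partial_u$ (up to the constant depending on normalisation of $g$; the exact constant is irrelevant). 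By Corollary~\ref{cor:divergence}, $q$ is divergence-free iff $d^\nabla Q = 0$, where $\nabla$ is the Levi-Civita connection of $g$. For the flat metric $g = 2\,du\,dv$ the only nonzero Christoffel symbols are the logarithmic derivatives of the conformal factor, but since I have chosen the factor to be constant, $\nabla$ is just the coordinate connection; hence $d^\nabla Q = 0$ reduces to $\partial_v(\epsilon^2 a^2) = 0$ and $\partial_u(b^2) = 0$, i.e. $\epsilon^2 a^2$ depends only on $u$ and $b^2$ depends only on $v$.

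The second step is to identify these two scalar conditions with the geometric statements $d_1\sigma_1 \in \Gamma T_1^*\otimes s_2$ and $\epsilon^2 d_2\sigma_2 \in \Gamma T_2^*\otimes s_1$. Write $d_1\sigma_1 = d_u\sigma_1 = \alpha\sigma_1 + \beta\sigma_2 + (\text{component in }d\sigma_1(T_1)\oplus d\sigma_2(T_2))$; because $s_1$ is a curvature sphere, $d_u\sigma_1 \in \Gamma f$ already kills the $T_2$-derivative component, so in fact $d_u\sigma_1 = \alpha\sigma_1 + \beta\sigma_2 + (\text{multiple of } d_v\sigma_1)$. Differentiating the normalisation $b^2 = (\sigma_{1,v},\sigma_{1,v})$ with respect to $u$ and using $(\sigma_1,\sigma_1) = 0$, $(\sigma_1,\sigma_{1,v})=0$, and the curvature-sphere relations, one gets $\partial_u b^2 = 2(\sigma_{1,uv},\sigma_{1,v})$, and expanding $\sigma_{1,uv} = d_v(d_u\sigma_1)$ using the expression above shows that $\partial_u b^2$ vanishes precisely when the $d_v\sigma_1$-component of $d_u\sigma_1$ vanishes, i.e. when $d_1\sigma_1$ has no "tangential" part and hence (being a section of $f$ orthogonal to $\sigma_1$ in the appropriate sense) lies in $\Gamma T_1^*\otimes\langle\sigma_1,\sigma_2\rangle$. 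The condition that it actually lands in $T_1^*\otimes s_2$ rather than $T_1^*\otimes f$ is a matter of allowable rescalings of $\sigma_1$: one can always absorb the $\sigma_1$-component $\alpha$ by rescaling $\sigma_1$ by a factor constant along $T_1$, which is exactly the remaining freedom compatible with the normalisation $b^2 = b^2(v)$. The symmetric computation with $u\leftrightarrow v$, $\sigma_1\leftrightarrow\sigma_2$, $b\leftrightarrow a$, and the extra factor $\epsilon^2$, handles $d_2\sigma_2$; note that when $\epsilon = 0$ the condition $\epsilon^2 d_2\sigma_2 \in \Gamma T_2^*\otimes s_1$ is vacuous, consistent with $\sigma_2$ being arbitrary in that case (Remark~\ref{rem:degcase}).

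The main obstacle I anticipate is bookkeeping rather than conceptual: correctly tracking which components of $d_u d_v\sigma_i$ are forced to vanish by the curvature-sphere conditions versus which encode genuine information, and making sure the "divergence-free" equation is insensitive to the choice of representative metric in the conformal class (this is where Corollary~\ref{cor:divergence} does the heavy lifting, since it lets me pass freely between $d^\nabla\!\star Q = 0$ and $d^\nabla Q = 0$ and hence pick the most convenient flat $g$). A secondary subtlety is the direction "geometric condition $\Rightarrow$ divergence-free": here one assumes $d_1\sigma_1 = \beta\sigma_2$ and $\epsilon^2 d_2\sigma_2 = \epsilon^2\gamma\sigma_1$ outright and simply runs the scalar computation backwards to conclude $\partial_u b^2 = 0 = \partial_v(\epsilon^2 a^2)$, which is the easy direction. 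Throughout, I would keep the computation coordinate-based since the statement is manifestly local and the coordinate approach makes the role of $U(u)$ and $V(v)$ transparent, matching the reformulation $q = -\epsilon^2 U^2\,du^2 + V^2\,dv^2$ used in Subsection~\ref{subsec:inv}.
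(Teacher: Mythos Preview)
Your overall strategy is sound and is a legitimate (in fact slightly simpler) variant of the paper's argument: the paper works with an \emph{arbitrary} representative metric $g$ and computes $d_{Y}(q(X,X))$ in two ways, whereas you exploit the freedom in $c$ to pick the flat representative $g=2\,du\,dv$, so that the Levi-Civita connection is trivial and $d^{\nabla}Q=0$ becomes the scalar conditions $\partial_{u}b^{2}=0$, $\partial_{v}(\epsilon^{2}a^{2})=0$. That reduction is fine.

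The gap is in your second step, where you translate these scalar conditions into the geometric ones. You write $d_{u}\sigma_{1}=\alpha\sigma_{1}+\beta\sigma_{2}+(\text{multiple of }d_{v}\sigma_{1})$, but this is wrong: since $s_{1}$ is a curvature sphere with curvature subbundle $T_{1}$, one already has $d_{u}\sigma_{1}\in\Gamma f$, so $d_{u}\sigma_{1}=\alpha\sigma_{1}+\beta\sigma_{2}$ with no further terms. Consequently the computation of $\partial_{u}b^{2}$ runs differently from what you sketch: using $(\sigma_{1},\sigma_{1,v})=0$, $(\sigma_{2},\sigma_{1,v})=0$ and $\sigma_{2,v}\in\Gamma f$, one gets
\[
\partial_{u}b^{2}=2(\sigma_{1,uv},\sigma_{1,v})=2(\partial_{v}(\alpha\sigma_{1}+\beta\sigma_{2}),\sigma_{1,v})=2\alpha\,b^{2}.
\]
Hence $\partial_{u}b^{2}=0$ is \emph{directly} equivalent to $\alpha=0$, i.e.\ to $d_{1}\sigma_{1}\in\Gamma T_{1}^{*}\otimes s_{2}$, and symmetrically for the other coordinate. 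Your appeal to a rescaling of $\sigma_{1}$ to absorb $\alpha$ is not available: the lifts $\sigma_{1},\sigma_{2}$ are fixed up to sign by the normalisation $(d\sigma_{1},d\sigma_{1})=q_{2}$, $(d\sigma_{2},d\sigma_{2})=q_{1}$, so there is no residual freedom to kill $\alpha$ by hand. Once this step is corrected, your coordinate proof goes through and is essentially equivalent to the paper's, the only difference being the paper's choice to keep $g$ arbitrary (which makes the argument manifestly conformally invariant but requires carrying $\nabla$ through the computation).
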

\begin{proof}
Let $g$ be a representative metric of the conformal structure $c$ and let $\nabla$ denote the Levi-Civita connection of $g$. Since $T_{1},T_{2}$ are the maximally isotropic subbundles of this metric, we have that $\nabla_{Z}X\in\Gamma T_{1}$ and $\nabla_{Z}Y\in\Gamma T_{2}$ for any $X\in\Gamma T_{1}$, $Y\in \Gamma T_{2}$ and $Z\in \Gamma T\Sigma$. 

Let $Q\in \Gamma \textrm{End}(T\Sigma)$ such that 
\[ q(X,Y) = g(X, Q(Y)).\]
Since $q\in \Gamma ( (T_{1}^{*})^{2}\oplus (T_{2}^{*})^{2})$, we have that $Q( T_{1})\le T_{2}$ and $Q (T_{2})\le T_{1}$. Hence, $Q$ is symmetric and trace-free with respect to $g$. Now for $X\in\Gamma T_{1}$ and $Y\in\Gamma T_{2}$
\begin{align*}
d_{Y}(q(X,X)) &= - \epsilon^{2}d_{Y}(d_{X}\sigma_{2},d_{X}\sigma_{2})\\
&= - 2\epsilon^{2}(d_{Y}d_{X}\sigma_{2},d_{X}\sigma_{2})\\
&= -2\epsilon^{2}((d_{X}d_{Y}\sigma_{2},d_{X}\sigma_{2}) + (d_{[Y,X]}\sigma_{2},d_{X}\sigma_{2})).\\
\end{align*}
 On the other hand, since $\nabla$ is the Levi-Civita connection we have that
\[ d_{Y}(q(X,X)) = d_{Y}(g(X,Q(X))) = g(\nabla_{Y}X,Q(X)) + g(X, \nabla_{Y}Q(X)).\]
Furthermore, $-\epsilon^{2}(d_{[Y,X]}\sigma_{2},d_{X}\sigma_{2})$ is equal to 
\[ q([Y,X],X) = g(Q(\nabla_{Y}X - \nabla_{X}Y),X) = g(Q(\nabla_{Y}X),X),\]
since $Q(\nabla_{X}Y)\in\Gamma T_{1}$. Hence, 
\[
-2\epsilon^{2} (d_{X}d_{Y}\sigma_{2},d_{X}\sigma_{2}) = g(\nabla_{Y}Q(X)- Q(\nabla_{Y}X),X).
\]
Therefore, since $\nabla_{Y}Q(X)- Q(\nabla_{Y}X)\in \Gamma T_{2}$, $-2\epsilon^{2}(d_{X}d_{Y}\sigma_{2},d_{X}\sigma_{2})=0$ if and only if $(\nabla_{Y}Q)(X)= 0$. One can then check that $-2\epsilon^{2}(d_{X}d_{Y}\sigma_{2},d_{X}\sigma_{2})=0$ if and only if $\epsilon^{2}d_{Y}\sigma_{2}\in \Gamma T_{2}^{*}\otimes s_{1}$. Similarly, one can show that $(\nabla_{X}Q)(Y) = 0$ if and only if $d_{X}\sigma_{1}\in\Gamma T_{1}^{*}\otimes s_{2}$. Therefore, 
\[ (d^{\nabla}Q)(X,Y) = (\nabla_{X}Q)(Y) - (\nabla_{Y}Q)(X) = 0\]
if and only if $\epsilon^{2}d_{2}\sigma_{2}\in\Gamma T_{2}^{*}\otimes s_{1}$ and $d_{1}\sigma_{1}\in\Gamma T_{1}^{*}\otimes s_{2}$. The result follows by applying Corollary~\ref{cor:divergence}.
\end{proof}

\bibliographystyle{plain}
\bibliography{bibliography2015}

\end{document}